\newtheorem{theorem}{Theorem}[section]
\newtheorem{corollary}[theorem]{Corollary}
\newtheorem{proposition}[theorem]{Proposition}
\newtheorem{lemma}[theorem]{Lemma}
\newtheorem{maintheorem}[theorem]{Main theorem}
\theoremstyle{definition}
\newtheorem{definition}[theorem]{Definition}
\newtheorem{remark}[theorem]{Remark}
\newtheorem{construction}[theorem]{Construction}
\newtheorem{example}[theorem]{Example}
\newcommand{\Chd}[1]{\mathrm{Ch(#1\text{-Mod})}}
\newcommand{\bZ}{\mathbb{Z}}
\newcommand{\bR}{\mathbb{R}}
\newcommand{\bN}{\mathbb{N}}
\newcommand{\fp}{\mathcal{F}(S)}                   
\newcommand{\bsd}{{E}}           
\newcommand{\comp}[1]{A \langle {#1} \rangle} 
\newcommand{\nbd}{\nobreakdash}
\newcommand{\bfree}{B'}
\numberwithin{section}{part}   
\newcommand{\ou}[2]{\begin{diagram}[PS,h=1.2em]   {#1}\\{\comp{#2}}    \end{diagram}} 
\newcommand{\fs}{R[x,x\inv,y,y\inv]}
\newcommand{\fy}{R[x,x\inv]\powers{y\inv}}
\newcommand{\fx}{R[y,y\inv]\powers{x\inv}}
\newcommand{\fv}{R\powers{x\inv,y\inv}}
\newcommand{\fys}{R[x,x\inv]\nov{y\inv}}
\newcommand{\fxs}{R[y,y\inv]\nov{x\inv}}
\newcommand{\fvs}{R\powers{x\inv,y\inv}}
\newcommand{\fvy}{R\nov{x\inv}\powers{y\inv}}
\newcommand{\fvx}{R\nov{y\inv}\powers{x\inv}}
\newcommand{\fvys}{R\nov{x\inv}\nov{y\inv}}
\newcommand{\fvxs}{R\nov{y\inv}\nov{x\inv}}
\newcommand{\as}{xyR[x,y]}
\newcommand{\ay}{0}
\newcommand{\ax}{0}
\newcommand{\av}{0}
\newcommand{\ays}{xyR[x,y]}
\newcommand{\axs}{xyR[x,y]}
\newcommand{\avs}{xyR[x,y]}
\newcommand{\avy}{0}
\newcommand{\avx}{0}
\newcommand{\avys}{xyR[x,y]}
\newcommand{\avxs}{xyR[x,y]}
\newcommand{\bs}{yR[x\inv,y]}
\newcommand{\by}{0}
\newcommand{\bx}{yR[y]\powers{x\inv}}
\newcommand{\bv}{0}
\newcommand{\bys}{yR[x\inv,y]}
\newcommand{\bxs}{yR[y]\powers{x\inv}}
\newcommand{\bvs}{y{R\powers{x\inv}[y]}}
\newcommand{\bvy}{0}
\newcommand{\bvx}{yR[y]\powers{x\inv}}
\newcommand{\bvys}{yR\powers{x\inv}[y]}
\newcommand{\bvxs}{yR[y]\powers{x\inv}}
\newcommand{\cs}{R[x\inv,y\inv]}
\newcommand{\cy}{R[x\inv]\powers{y\inv}}
\newcommand{\cx}{R[y\inv]\powers{x\inv}}
\newcommand{\cv}{R\powers{x\inv,y\inv}}
\newcommand{\cys}{R[x\inv]\powers{y\inv}}
\newcommand{\cxs}{R[y\inv]\powers{x\inv}}
\newcommand{\cvs}{R\powers{x\inv,y\inv}}
\newcommand{\cvy}{R\powers{x\inv,y\inv}}
\newcommand{\cvx}{R\powers{x\inv,y\inv}}
\newcommand{\cvys}{R\powers{x\inv,y\inv}}
\newcommand{\cvxs}{R\powers{x\inv,y\inv}}
\newcommand{\ds}{xR[x,y\inv]}
\newcommand{\dx}{0}
\newcommand{\dy}{xR[x]\powers{y\inv}}
\newcommand{\dv}{0}
\newcommand{\dxs}{xR[x,y\inv]}
\newcommand{\dys}{xR[x]\powers{y\inv}}
\newcommand{\dvs}{xR\powers{y\inv}[x]}
\newcommand{\dvx}{0}
\newcommand{\dvy}{xR[x]\powers{y\inv}}
\newcommand{\dvxs}{xR\powers{y\inv}[x]}
\newcommand{\dvys}{xR[x]\powers{y\inv}}
\newcommand{\powers}[1]{[\kern -1pt [{#1}] \kern -1pt ]} 
\newcommand{\nov}[1]{(\kern -1.7pt ( {#1})\kern-1.7pt )} 
\let\nbd=\nobreakdash
\let\iso=\cong
\DeclareMathOperator*{\tensor}{\otimes}
\newcommand{\id}{\mathrm{id}}
\newcommand{\nix}{\,\text{-}\,}
\newcommand{\resp}{{\it resp.}}
\newcommand{\ie}{{\it i.e.}}
\newcommand{\totlt}{\ensuremath{{}{^\mathrm{lt}}\mathrm{Tot}\,}}
\newcommand{\totrt}{\ensuremath{\mathrm{Tot}^\mathrm{rt}\,}}
\newcommand{\totblt}{\ensuremath{{}_{\mathrm{blt}}\mathrm{Tot}\,}}
\newcommand{\totds}{\ensuremath{\mathrm{T}
    \raisebox {0.6pt} {\scalebox{0.6}{\ensuremath{+}}} \hskip 0.25 pt
    \llap{\rm o} \mathrm{t}\,}} 
\newcommand{\vgamma}{\check{\Gamma}}
\newcommand{\prodlt}{\ensuremath{\sideset{^\textrm{lt}}{}\prod}}
\newcommand{\prodrt}{\ensuremath{\sideset{}{^\textrm{rt}}\prod}}
\newcommand{\prodblt}{\ensuremath{\sideset{_\mathrm{blt}}{}\prod}}
\newcommand{\T}{\ensuremath{\mathcal{T}}}
\newcommand{\A}{\ensuremath{\mathcal{A}}}
\newcommand{\inv}{^{-1}}
\newcommand{\LL}{{R[x,\,x\inv,\, y,\, y\inv]}}
\begin{document}

\title[Finite domination and Novikov rings. Two variables]{Finite
  domination and Novikov rings.\\Laurent polynomial rings in two variables}

\date{\today}

\author{Thomas H\"uttemann}

\address{Thomas H\"uttemann\\ Queen's University Belfast\\ School of
  Mathematics and Physics\\ Pure Mathematics Research Centre\\ Belfast
  BT7~1NN\\ Northern Ireland, UK}

\email{t.huettemann@qub.ac.uk}

\urladdr{http://huettemann.zzl.org/}

\author{David Quinn}

\address{David Quinn\\ University of Aberdeen\\ School of Natural and
  Computing Sciences\\ Institute of Mathematics\\ Fraser Noble Building\\
  Aberdeen AB24~3UE\\ Scotland, UK}

\email{davidquinnmath@gmail.com}

\subjclass[2010]{Primary 18G35; Secondary 55U15}

\thanks{This work was supported by the Engineering and Physical
  Sciences Research Council [grant number EP/H018743/1].}

\begin{abstract}
  Let $C$ be a bounded cochain complex of finitely generated free
  modules over the \textsc{Laurent} polynomial ring $L = \LL$. The
  complex $C$ is called $R$-finitely dominated if it is homotopy
  equivalent over~$R$ to a bounded complex of finitely generated
  projective $R$-modules. Our main result characterises $R$-finitely
  dominated complexes in terms of \textsc{Novikov} cohomology: $C$ is
  $R$-finitely dominated if and only if eight complexes derived
  from~$C$ are acyclic; these complexes are $C \tensor_L
  R\powers{x,y}[(xy)\inv]$ and $C \tensor_L
  R[x,x\inv]\powers{y}[y\inv]$, and their variants obtained by
  swapping $x$ and~$y$, and replacing either indeterminate by its
  inverse.
\end{abstract}

\maketitle

\tableofcontents

\part{Introduction}

\section{The main theorem}

Let $R \subseteq K$ be a pair of unital rings. A cochain complex~$C$
of $K$\nbd-modules is called {\it $R$-finitely dominated\/} if $C$~is
homotopy equivalent, as an $R$\nbd-module complex, to a bounded
complex of finitely generated projective $R$\nbd-modules.

Finite domination is relevant, for example, in group theory and
topology. Suppose that $G$ is a group of type~$(FP)$; this means, by
definition, that the trivial $G$\nbd-module~$\bZ$ admits a finite
resolution~$C$ by finitely generated projective
$\bZ[G]$\nbd-modules. Let $H$~be a subgroup of~$G$. Deciding whether
$H$ is of type~$(FP)$ is equivalent to deciding whether $C$ is
$\bZ[H]$\nbd-finitely dominated.

In topology, finite domination is considered in the context of
homological finiteness properties of covering spaces, or properties of
ends of manifolds. See for example \textsc{Ranicki}'s article for
results and references~\cite{Ranicki-findom}.

\medbreak

Our starting point is the following result of \textsc{Ranicki}:

\begin{theorem}[{\cite[Theorem~2]{Ranicki-findom}}]
  Let $C$ be a bounded complex of finitely generated free modules over
  $K = R[x,x\inv]$. The complex $C$ is $R$\nbd-finitely dominated if
  and only if the two complexes
  \[C \tensor_K R\nov{x} \quad \text{and} \quad C \tensor_K
  R\nov{x\inv}\] are acyclic.
\end{theorem}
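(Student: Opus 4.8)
The plan is to prove the two implications by quite different routes. The forward implication will follow by realising~$C$ as an explicit algebraic mapping torus and then inverting an operator over each \textsc{Novikov} ring by a convergent geometric series. The converse will be reduced, through an algebraic \textsc{Mayer}--\textsc{Vietoris} argument, to a statement about complexes over the polynomial rings $R[x]$ and $R[x\inv]$ --- a transversality lemma --- which I expect to be the main obstacle. Throughout put $K = R[x,x\inv]$; note that $R\nov{x}$ and $R\nov{x\inv}$ are $K$\nbd-algebras in which the element~$x$, respectively~$x\inv$, is topologically nilpotent.

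For the direction ``$R$\nbd-finitely dominated $\Rightarrow$ acyclic'', suppose $C \simeq_R P$ with $P$ a bounded complex of finitely generated projective $R$\nbd-modules. Transporting the $R$\nbd-linear chain automorphism ``multiplication by~$x$'' of~$C$ across this equivalence produces a chain homotopy equivalence $\phi \colon P \to P$ with homotopy inverse~$\psi$, and the standard short exact sequence identifying a complex with the mapping torus of its canonical self-map gives $C \simeq_K \mathrm{Cone}\bigl(\phi \tensor 1 - 1 \tensor x \colon P \tensor_R K \to P \tensor_R K\bigr)$. Tensoring over~$K$, the complexes $C \tensor_K R\nov{x}$ and $C \tensor_K R\nov{x\inv}$ become the mapping cones of $\phi - x$ on $P \tensor_R R\nov{x}$ and $P \tensor_R R\nov{x\inv}$. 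Over $R\nov{x\inv}$ the operator $\phi - x = -x(1 - x\inv\phi)$ is invertible, since $x$ is a unit and $1 - x\inv\phi$ has inverse $\sum_{n \ge 0}(x\inv\phi)^n$; over $R\nov{x}$ one checks instead that $\psi(\phi - x) \simeq 1 - x\psi \simeq (\phi - x)\psi$ with $1 - x\psi$ invertible, so $\phi - x$ is a chain homotopy equivalence. In either case the cone is acyclic.

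For the converse, reduce first from~$K$ to the polynomial rings $R[x]$ and $R[x\inv]$. Clearing denominators, choose bounded complexes $C^+$ of finitely generated free $R[x]$\nbd-modules and $C^-$ of finitely generated free $R[x\inv]$\nbd-modules, realised as subcomplexes of~$C$ with $C^+ \tensor_{R[x]} K = C = C^- \tensor_{R[x\inv]} K$; the powers of~$x$ entering the two constructions may be chosen so that $C^+ + C^- = C$ in every degree and $L := C^+ \cap C^-$ is a bounded complex of finitely generated free $R$\nbd-modules. The resulting short exact sequence $0 \to L \to C^+ \oplus C^- \to C \to 0$ of $R$\nbd-module complexes --- the algebraic \textsc{Mayer}--\textsc{Vietoris} sequence attached to $\mathbb{P}^1_R = \mathbb{A}^1 \cup \mathbb{A}^1$ --- shows that $C$ is $R$\nbd-finitely dominated if and only if $C^+$ and $C^-$ both are. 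Since $C^+ \tensor_{R[x]} R\nov{x\inv} = C \tensor_K R\nov{x\inv}$ and $C^- \tensor_{R[x\inv]} R\nov{x} = C \tensor_K R\nov{x}$ are acyclic by hypothesis, it remains to prove the transversality assertion (together with its image under $x \leftrightarrow x\inv$): \emph{a bounded complex~$D$ of finitely generated free $R[x]$\nbd-modules with $D \tensor_{R[x]} R\nov{x\inv}$ acyclic is $R$\nbd-finitely dominated.}

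This transversality assertion is where I expect the real difficulty to lie. The aim would be to show that its hypothesis forces~$D$ to be chain homotopy equivalent over $R[x]$ to the complex~$Q$ carrying an $x$\nbd-action through some chain self-map~$\nu$, for a suitable bounded complex~$Q$ of finitely generated free $R$\nbd-modules; the underlying $R$\nbd-complex of such an object is~$Q$, which is $R$\nbd-finitely dominated. The pair $(Q,\nu)$ should be read off from a chain contraction of $D \tensor_{R[x]} R\nov{x\inv}$: truncating the power-series entries of the contraction below a sufficiently negative power of~$x$ produces a bounded approximate contraction over~$R[x]$ whose error term is small enough to be inverted by a geometric series, and the complex~$Q$ and self-map~$\nu$ emerge from the ensuing bookkeeping. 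Granting this, $C^+$ and $C^-$ are $R$\nbd-finitely dominated, and hence so is~$C$. The genuinely delicate point throughout is precisely this last step --- converting an abstract chain contraction over a \textsc{Novikov} ring into honest finiteness data over~$R$ --- while the other ingredients are the \textsc{Mayer}--\textsc{Vietoris} sequence and routine manipulation of convergent geometric series.
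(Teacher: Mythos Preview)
The paper does not prove this theorem: it is quoted, with attribution to \textsc{Ranicki} \cite[Theorem~2]{Ranicki-findom}, as the starting point and motivation for the two-variable Main Theorem, and no argument is supplied. There is therefore nothing in the paper to compare your proposal against directly.

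On its merits, your forward implication is correct and complete. Realising $C$ as the mapping torus of the transported self-map $\phi$ and then inverting $\phi - x$ (respectively $1 - x\psi$) over the two \textsc{Novikov} rings by geometric series is the standard route; the paper uses the same mechanism, phrased via mapping $1$\nbd-tori and truncated totalisations (\S\ref{sec:1-tori}, Proposition~\ref{prop:double_acyclic}, Lemma~\ref{lem:edge}), to establish the analogous forward implication in two variables.

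For the converse, your \textsc{Mayer}--\textsc{Vietoris} reduction to $R[x]$ and $R[x\inv]$ is sound, and you are right that the entire weight now rests on the transversality assertion. Your sketch of that step, however, is not yet a proof: ``truncate the contraction and invert the error by a geometric series'' hides exactly the bookkeeping that matters --- one has to show that the truncated null-homotopy interacts with the differential of~$D$ so as to produce a genuine $R[x]$\nbd-linear deformation retraction onto a subcomplex on which $x$ acts nilpotently up to homotopy, and that this subcomplex is bounded finitely generated over~$R$. This is precisely the content of \textsc{Ranicki}'s original argument, and it does require real work; as written your outline does not supply it. So the proposal has a named, acknowledged gap rather than an error.

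It may interest you that the present paper's two-variable converse (Part~\ref{part:2}) takes a different route which specialises to one variable: one extends $C$ to a quasi-coherent diagram over the face lattice (here of an interval rather than a square), computes its \textsc{\v Cech} cohomology to exhibit a bounded finitely generated free $R$\nbd-complex~$B'$, and then uses the \textsc{Novikov} acyclicity hypotheses to show that $C$ is a retract of~$B'$ in the derived category. This avoids algebraic transversality altogether.
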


Here $R\nov{x} = R\powers{x}[x\inv]$ denotes the ring of formal
\textsc{Laurent} series in~$x$, and $R\nov{x\inv} =
R\powers{x\inv}[x]$ denotes the ring of formal \textsc{Laurent} series
in~$x\inv$.

For \textsc{Laurent} polynomial rings in several indeterminates, it is
possible to strengthen this result to allow for iterative application,
see for example~\cite{Iteration}. In particular, writing $L = \LL$ for
the \textsc{Laurent} polynomial ring in two variables, one can show
that a bounded complex of finitely generated free $L$\nbd-modules is
$R$\nbd-finitely dominated if and only if the four complexes
\begin{align*}
  &C \tensor_L R[x,x\inv]\nov{y}  && C \tensor_L R[x,x\inv]\nov{y\inv} \\
  &C \tensor_{R[x,x\inv]} R\nov{x} && C \tensor_{R[x,x\inv]} R\nov{x\inv}
\end{align*}
are acyclic.

In the present paper, we pursue a different, non-iterative approach,
leading to a new characterisation of finite domination. To state the
main result, we introduce another bit of notation: we write
$R\nov{x,y}$ for the ring of those formal \textsc{Laurent} series~$f$
in~$x$ and~$y$ having the property that $x^ky^k \cdot f \in
R\powers{x,y}$ for $k$ sufficiently large. That is,
\[R\nov{x,y} = R\powers{x,y} [(xy)\inv] \ .\]

\begin{maintheorem}
  \label{thm:main}
  Let $C$ be a bounded cochain complex of finitely generated free
  $L$\nbd-modules. Then the following two statements are equivalent:
  \begin{enumerate}[{\rm (a)}]
  \item \label{item:findom} The complex~$C$ is $R$\nbd-finitely
    dominated, \ie, $C$~is homotopy equivalent, as an
    $R$\nbd-module cochain complex, to a bounded cochain complex of
    finitely generated projective $R$\nbd-modules.
  \item \label{item:acyclic} The eight cochain complexes listed below
    are acyclic (all tensor products taken over~$L$):
    \begin{subequations}
      \begin{gather}\left.
          \begin{aligned}
            \label{eq:cond_edge}
            & C \tensor R[x,\, x\inv]\nov{y} && C \tensor R[x,\,
            x\inv]\nov{y\inv} \\
            & C \tensor R[y,\, y\inv]\nov{x} && C \tensor R[y,\,
            y\inv]\nov{x\inv}
          \end{aligned}\quad \right\}
        \\
        \noalign{\smallskip}\left.
          \begin{aligned}
            \label{eq:cond_vertex}
            & C \tensor R\nov{x,\, y} \hskip 2.1 em && C \tensor
            R\nov{x\inv,\, y\inv} \hskip 1.15 em \\
            & C \tensor R\nov{x,\, y\inv} && C \tensor R\nov{x\inv,\, y}
          \end{aligned}\quad \right\}
      \end{gather}
    \end{subequations}
  \end{enumerate}
\end{maintheorem}

The proofs of the two implications are quite different in nature. We
will establish \eqref{item:findom}~$\Rightarrow$~\eqref{item:acyclic}
in Corollaries~\ref{cor:x_or_y_acyclic} and~\ref{cor:XYacyclic} with
tools from homological algebra of multi-complexes, generalising
techniques used by the first author in~\cite{homology}, while the
reverse implication is treated in \S\ref{sec:final_proof} by a
homotopy theoretic argument, ultimately generalising one half of the
proof of \cite[Theorem~2]{Ranicki-findom}.

\section{Relation with $\Sigma$-invariants}

It might be worth explaining how our results are related to the
so-called $\Sigma$\nbd-invariants in the spirit of \textsc{Bieri},
\textsc{Neumann} and \textsc{Strebel}. Let $G$ be a group. For every
character $\chi \colon G \rTo \bR$ to the additive group of the reals
we have a monoid $G_\chi = \{g \in G \,|\, \chi(g) \geq 0\}$. Now
suppose $C$ is a non-negatively indexed chain complex of
$\bZ[G]$\nbd-modules. Then $C$~has, by restriction of scalars, the
structure of a $\bZ[G_\chi]$\nbd-module chain complex. Following
\textsc{Farber\/} {\it et.al.\/} one defines
\cite[Definition~9]{FGS-Sigma} the $m$th $\Sigma$\nbd-invariant of~$C$
as
\[\Sigma^m (C) = \{\chi \neq 0 \,|\, C \text{ has finite
  \(m\)-type over \(\bZ[G_\chi]\)} \} / \bR_+\ .\] So $\Sigma^m (C)$
is a quotient of the set of non-trivial~$\chi$ for which there is a
chain complex~$C^\prime$ consisting of finitely generated projective
$\bZ[G_\chi]$\nbd-modules, and a $\bZ[G_\chi]$-linear chain map $f
\colon C^\prime \rTo C$ with $f_i \colon H_i (C^\prime) \rTo H_i (C)$
an isomorphism for $i<m$ and an epimorphism for $i=m$. Two different
characters are identified in the quotient if, and only if, they are
positive real multiples of each other.

\begin{theorem}[{\cite[Corollary~4]{FGS-Sigma}}]
  \label{thm:fgs}
  Suppose that $C$ consists of finitely generated free
  $\bZ[G]$\nbd-modules, and is such that $C_i = 0$ for $i>m$. Let $N$
  be a normal subgroup of~$G$ with \textsc{abel}ian
  quotient~$G/N$. Then the $\bZ[N]$\nbd-module complex~$C$ is chain
  homotopy equivalent to a bounded chain complex of finitely generated
  projective $\bZ[N]$\nbd-modules concentrated in degrees $\leq m$ if
  and only if $\Sigma^m(C)$~contains the equivalence class of every
  non-trivial character of~$G$ that factorises through~$G/N$ (\ie,
  whose kernel contains~$N$).\qed
\end{theorem}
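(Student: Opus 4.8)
\emph{Proof proposal.} Theorem~\ref{thm:fgs} is the \(\Sigma\)\nbd-invariant counterpart of \textsc{Ranicki}'s theorem: one ``global'' finiteness condition over \(\bZ[N]\) should be detected by a whole family of ``directional'' conditions, one for each ray of characters killing~\(N\). The plan is to organise the argument around a \textsc{Novikov}-homological reformulation of \(\Sigma^m\), and then to treat the two implications separately, the forward one being essentially formal and the reverse one carrying the content. Throughout I write \(Q = G/N\) and \(p \colon G \to Q\) for the quotient; since \(G/N\) is abelian, the characters of~\(G\) that factor through~\(G/N\) are exactly the maps \(\bar\chi \circ p\) with \(\bar\chi \in \mathrm{Hom}(Q, \bR)\), and their classes fill out the subsphere of the character sphere appearing in~(b). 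For such a~\(\chi\) one has \(N \subseteq \ker\chi \subseteq G_\chi\), so that \(N\) is a normal subgroup of the monoid~\(G_\chi\) and \(\bZ[G_\chi]\) is a subring of~\(\bZ[G]\) which is free, hence faithfully flat, as a left and as a right \(\bZ[N]\)\nbd-module; likewise I regard \(\bZ[G]\) as the crossed product \(\bZ[N] * Q\), graded by~\(Q\).

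\emph{Step 1: a Novikov description of \(\Sigma^m\).} For a non-trivial~\(\chi\) factoring through~\(Q\), let \(\widehat{\bZ[G]}_\chi\) be the associated \textsc{Novikov} ring, consisting of those formal sums \(\sum_g a_g g\) whose support meets each half-space \(\{\chi \geq c\}\) in a finite set. I would invoke the \textsc{Sikorav}-type criterion in its version for chain complexes --- available in~\cite{FGS-Sigma} itself, with antecedents in the work of \textsc{Bieri--Renz} and \textsc{Schütz} --- to the effect that \([\chi] \in \Sigma^m(C)\) if and only if \(H_i\bigl(C \otimes_{\bZ[G]} \widehat{\bZ[G]}_\chi\bigr) = 0\) for \(i < m\) (together with a condition on \(H_m\) that is automatic here since \(C_i = 0\) for \(i>m\)); call such a complex \emph{\(m\)\nbd-acyclic}. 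With this step done, condition~(b) of the theorem reads: the \textsc{Novikov} complex \(C \otimes_{\bZ[G]} \widehat{\bZ[G]}_\chi\) is \(m\)\nbd-acyclic for every non-trivial~\(\chi\) factoring through~\(Q\).

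\emph{Step 2: (a)\(\Rightarrow\)(b).} Assume \(C\) is chain homotopy equivalent, over \(A := \bZ[N]\), to a bounded complex~\(D\) of finitely generated projective \(A\)\nbd-modules concentrated in degrees \(\leq m\). The point is that \(\widehat{\bZ[G]}_\chi\) is flat as a \(\bZ[G]\)\nbd-module and, being built coset-by-coset out of a \textsc{Novikov}-monomial basis, is free over~\(A\); one therefore gets homotopy equivalences of \(\widehat{\bZ[G]}_\chi\)\nbd-module complexes relating \(C \otimes_{\bZ[G]} \widehat{\bZ[G]}_\chi\) to \(D \otimes_A \widehat{\bZ[G]}_\chi\), the latter a bounded complex of finitely generated projective \(\widehat{\bZ[G]}_\chi\)\nbd-modules in degrees \(\leq m\). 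In particular it is \(m\)\nbd-acyclic, and tracking the canonical comparison map through the equivalences gives \([\chi] \in \Sigma^m(C)\). The only subtlety is the asserted flatness of \textsc{Novikov} rings over the crossed product, which for abelian~\(Q\) is a known technical lemma.

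\emph{Step 3: (b)\(\Rightarrow\)(a), the main obstacle.} This is where the work lies: from the \(m\)\nbd-acyclicity of the whole family \(\{\,C \otimes_{\bZ[G]} \widehat{\bZ[G]}_\chi\,\}_\chi\), indexed by all rays of characters killing~\(N\), one must manufacture an honest finite projective \(\bZ[N]\)\nbd-model for~\(C\) in degrees \(\leq m\). I see two routes. The first is an induction on \(r = \mathrm{rank}(Q)\): for \(r=0\) the statement is vacuous, for \(r = 1\) it is essentially \textsc{Ranicki}'s theorem (\cite[Theorem~2]{Ranicki-findom}) applied over an intermediate Laurent\nbd-polynomial\nbd-type extension of~\(\bZ[N]\) sitting inside~\(\bZ[G]\), and the inductive step peels off a rank-one direction, using the openness of~\(\Sigma^m\) to see that ``contains the whole subsphere'' survives the passage to rational subspaces; this is the iterative mechanism of~\cite{Iteration}. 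The second route is to assemble the directional data directly by a \textsc{Mayer--Vietoris}-type descent over the character sphere, in the spirit of the multi\nbd-complex techniques used later in the present paper. In either case the genuine obstacle is that a character may have dense (higher rank or irrational) image, so one cannot localise at a single element of~\(\bZ[G]\); this is exactly what forces the \textsc{Novikov}\nbd-completion machinery and its completeness / \textsc{Mittag-Leffler} properties, and the final passage from ``\(m\)\nbd-acyclic after all these completions'' to ``finitely dominated over \(\bZ[N]\) up to degree~\(m\)'' is a finiteness\nbd-obstruction argument generalising one half of the proof of \cite[Theorem~2]{Ranicki-findom}.
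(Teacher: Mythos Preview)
The paper does not prove Theorem~\ref{thm:fgs} at all: it is quoted verbatim as \cite[Corollary~4]{FGS-Sigma} and closed with a \qed\ box immediately after the statement. It functions in the paper purely as background, to situate the Main Theorem in the landscape of $\Sigma$\nbd-invariants, and is never invoked in the authors' own arguments. So there is no ``paper's own proof'' against which to compare your proposal.

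That said, your sketch is broadly in the right spirit for how results of this type are established in the cited literature: the translation of $\Sigma^m(C)$ into \textsc{Novikov} acyclicity (your Step~1) is indeed the content of \textsc{Sch\"utz}'s work quoted as Theorem~\ref{thm:schuetz}, and the hard direction (your Step~3) does in practice proceed either by induction on the rank of the abelian quotient or by a controlled\nbd-algebra argument. One caution: in Step~2 you assert that $\widehat{\bZ[G]}_\chi$ is flat over~$\bZ[G]$; for general~$G$ and non-discrete~$\chi$ this is not routine and is not what one actually uses---the standard argument works directly at the level of homotopy categories of bounded\nbd-below projective complexes, where the relevant base\nbd-change functors preserve homotopy equivalences without any flatness input. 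If you intend to write out a genuine proof, that is the point to tighten.
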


Note that the set of equivalence classes of non-trivial characters
which are trivial on~$N$ represents a sphere in the set of equivalence
classes of all non-trivial characters.

\smallbreak

An explicit link to \textsc{Novikov} homology has been documented by
\textsc{Sch\"utz}. For a character $\chi \colon G \rTo \bR$ we define
the \textsc{Novikov} ring
\[\widehat{R G_\chi} = \Big\{ f \colon G \rTo R \,|\, \forall t \in \bR
\colon \# \, \big( \mathrm{supp}(f) \cap \chi\inv ([t, \infty [\, )
\big) < \infty \Big\} \ ,\] equipped with the usual involution
product; here $\mathrm{supp}(f) = f\inv \big(R \setminus \{0\}
\big)$. Note that $R[G]$~is a subring of~$\widehat{R G_\chi}$.

\begin{theorem}[{\textsc{Sch\"utz} \cite[Theorem~4.7]{Schuetz-Sigma}}]
  \label{thm:schuetz}
  Let $C$ be a bounded chain complex of finitely generated free
  $R[G]$\nbd-modules. Suppose that $N$~is a normal subgroup of~$G$
  with quotient $G/N \iso \bZ^k$ a free \textsc{abel}ian group of
  finite rank. The complex $C$ is $R[N]$\nbd-finitely dominated if and
  only if for every character $\chi \colon G \rTo \bR$ which is
  trivial on~$N$ the complex $C \tensor_{R[G]} \widehat{R G_\chi}$ is
  acyclic.\qed
\end{theorem}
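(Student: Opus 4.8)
We sketch a possible proof. The plan is to reduce the statement to the setting of twisted \textsc{Laurent} polynomial rings, to prove the implication ``finitely dominated $\Rightarrow$ all \textsc{Novikov} homologies vanish'' by an algebraic mapping-torus argument, and to establish the converse by induction on the rank~$k$ of~$G/N$, peeling off one indeterminate at a time via \textsc{Ranicki}'s theorem. For the reduction: since $G/N \cong \bZ^k$ is free abelian the extension $1 \to N \to G \to \bZ^k \to 1$ splits, so $R[G]$ is a twisted \textsc{Laurent} polynomial ring $T = S_\alpha[t_1^{\pm1}, \dots, t_k^{\pm1}]$ over $S := R[N]$, with $\bZ^k$\nbd-grading $T = \bigoplus_v S u_v$. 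A character $\chi$ trivial on~$N$ is the same datum as a homomorphism $\bar\chi \colon \bZ^k \to \bR$, equivalently a ray in $\bR^k \setminus \{0\}$, and $\widehat{RG_\chi}$ becomes the twisted \textsc{Novikov} completion $\widehat T_{\bar\chi}$ consisting of the formal sums $\sum_v a_v u_v$ ($a_v \in S$) whose support meets each half-space ``$\bar\chi \geq t$'' in a finite set. The claim to be proven thus reads: $C$ is $S$\nbd-finitely dominated if and only if $C \tensor_T \widehat T_{\bar\chi}$ is acyclic for every $\bar\chi \in \bR^k \setminus \{0\}$.

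For ``only if'', I would first establish, generalising the one-variable algebraic mapping-torus (or ``band'') theorem, that an $S$\nbd-finitely dominated complex~$C$ of finitely generated free $T$\nbd-modules is homotopy equivalent over~$T$ to a complex built from a bounded complex~$D$ of finitely generated projective $S$\nbd-modules by $k$ successive algebraic mapping-torus constructions -- that is, by coning off, in turn, $k$ difference operators $u_i h_i - \id$ on suitable twisted extensions of~$D$, where $h_1, \dots, h_k$ are twisted self-equivalences of~$D$ that commute up to coherent homotopy. Given $\bar\chi \neq 0$, I would choose a $\bZ$\nbd-basis $f_1, \dots, f_k$ of $\bZ^k$ with $\bar\chi(f_1) \neq 0$ and rewrite this model in the new basis; after applying $- \tensor_T \widehat T_{\bar\chi}$, one of the operators $u_{f_1} h_{f_1} - \id$, $u_{-f_1} h_{f_1}^{-1} - \id$ becomes invertible, because the corresponding geometric series converges in $\widehat T_{\bar\chi}$ -- and this is exactly where the support condition defining the completion enters. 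Since the cone of an equivalence is acyclic, and tensoring a bounded acyclic complex of projectives with anything preserves acyclicity, $C \tensor_T \widehat T_{\bar\chi}$ is acyclic.

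For ``if'' I would induct on~$k$; the case $k=0$ is vacuous, as a bounded complex of finitely generated free $S$\nbd-modules is already one of projectives. For the step, write $T = T'_\beta[t_k^{\pm1}]$ over $T' = S_\alpha[t_1^{\pm1}, \dots, t_{k-1}^{\pm1}] = R[G']$, where $G' \leq G$ is the preimage of $\bZ^{k-1}$ and $N \triangleleft G'$ has rank $k-1$. For the two axis characters $v \mapsto \pm v_k$ the completion $\widehat T_{\bar\chi}$ is precisely the twisted \textsc{Laurent} series ring $T'_\beta((t_k^{\pm1}))$ occurring in \textsc{Ranicki}'s theorem, so its hypotheses are met and $C$ is $T'$\nbd-finitely dominated, say $C \simeq_{T'} C'$ with $C'$ bounded and finitely generated projective over~$T'$ (the gap between ``projective'' and ``free'' being bridged by the usual stabilisation, or by running the induction with finitely dominated rather than free complexes throughout). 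To invoke the inductive hypothesis for $C'$ over $R[G']$ I must verify that $C' \tensor_{T'} \widehat{T'}_{\bar\chi'}$ is acyclic for every character $\bar\chi'$ of $\bZ^{k-1}$; this I would deduce from the corresponding condition $C \tensor_T \widehat T_{\bar\chi} = 0$ for a lift $\bar\chi$ of $\bar\chi'$ to $\bZ^k$, the crux being to pass between $C \tensor_{T'} \widehat{T'}_{\bar\chi'}$ and $C \tensor_T \widehat T_{\bar\chi}$, which requires a flatness (or filtered-colimit) comparison of the two completions. With all the $\bZ^{k-1}$\nbd-conditions in hand, the inductive hypothesis yields that $C'$, and hence~$C$, is $S$\nbd-finitely dominated.

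The hard part will be the ``only if'' direction. For characters in coordinate directions everything reduces to iterated applications of \textsc{Ranicki}'s theorem; the genuine difficulty is posed by characters pointing in ``irrational'' directions, where no such reduction is available, and handling them forces the honestly multivariable mapping-torus model of an $S$\nbd-finitely dominated $T$\nbd-complex -- together with the higher coherence data required when $k \geq 3$ -- plus a uniform analysis, as $\bar\chi$ ranges over the sphere, of which operators become invertible in $\widehat T_{\bar\chi}$. Establishing that model and the attendant convergence statements is, I expect, the technical core. (Alternatively one could organise ``only if'' as a \textsc{Mayer}--\textsc{Vietoris} argument over a fan subdividing $\bR^k$, in the spirit of the multi-complex techniques of the present paper, or deduce it from the openness of $\Sigma$\nbd-invariants together with Theorem~\ref{thm:fgs} and the $\Sigma$\nbd-to\nbd-\textsc{Novikov} dictionary -- but each of these merely relocates the difficulty.)
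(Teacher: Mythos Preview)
The paper does not prove this theorem. It is stated as a result of \textsc{Sch\"utz}, with the citation \cite[Theorem~4.7]{Schuetz-Sigma} and a terminal \qedsymbol\ marking it as quoted without argument; no proof is supplied anywhere in the paper. There is therefore no ``paper's own proof'' against which to compare your proposal.

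That said, your sketch is a plausible outline of how one might approach \textsc{Sch\"utz}'s theorem, and its overall shape --- mapping-torus models for the forward direction, induction on the rank via \textsc{Ranicki}'s one-variable theorem for the converse --- is in the spirit of the techniques developed elsewhere in this paper. One genuine gap, however: your reduction step asserts that the extension $1 \to N \to G \to \bZ^k \to 1$ splits because the quotient is free abelian. This is false in general when $G$ is non-abelian, since lifts to~$G$ of the standard generators of~$\bZ^k$ need not commute (the integer \textsc{Heisenberg} group, with $N$ its centre and $k=2$, is a counterexample). One obtains only a crossed-product description of~$R[G]$ over~$R[N]$, involving a $2$\nbd-cocycle twist in addition to the automorphisms, rather than a genuine twisted \textsc{Laurent} polynomial ring. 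Your argument can presumably be adapted to that setting, but the bookkeeping is heavier than you indicate, and this is precisely the sort of complication that \textsc{Sch\"utz}'s original proof must address.
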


In these terms, our main result Theorem~\ref{thm:main} concerns the
case of a split group extension
\[1 \rTo N \rTo N \times \bZ^2 \rTo \bZ^2 \rTo 0\] (with the
translation $R = \bZ[N]$ and $L = \bZ[N \times \bZ^2] \iso \LL$), or
indeed the case $G = \bZ^2$ and trivial group~$N$.  The sphere which
characterises finite domination of~$C$ according to
Theorem~\ref{thm:fgs} is homeomorphic to a circle~$S^1$. In our
approach, it is replaced by the boundary of a square in~$\bR^2$; its
geometry and combinatorics encode algebraic information that lead to
our new set of homological conditions characterising finite
domination.

\section{Algebraic examples}

Let us work over the ring $R = \bZ$ for simplicity. We omit the
verification of the following purely algebraic facts:

\begin{lemma}
  \label{lem:units_in_Novikov}
  \begin{enumerate}[{\rm (a)}]
  \item The ring $\bZ\nov{x,y}$ is a unique factorisation domain. An
    element of~$\bZ\nov{x,y}$ is a unit if and only if it is a product
    of a monomial $x^k y^\ell$ (for some $k, \ell \in \bZ$) with a
    unit in~$\bZ\powers{x,y}$ (\ie, a power series having the constant
    term~$\pm 1$).
  \item The ring $\bZ[x,x\inv]\nov{y}$ is a unique factorisation
    domain. An element of~$\bZ[x,x\inv]\nov{y}$ is a unit if and only
    if it is a product of a monomial $y^\ell$ (for some $\ell \in
    \bZ$) with a unit in~$\bZ[x,x\inv]\powers{y}$ (\ie, a power series
    with $y^0$~having coefficient~$\pm x^k$ for some $k \in
    \bZ$). \qed
  \end{enumerate}
\end{lemma}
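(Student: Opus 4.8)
The plan is to exploit that both rings are localisations of power series rings, and to treat the factoriality assertions and the unit assertions by separate arguments. For factoriality, $\bZ\powers{x,y}$ is a \textsc{Noether}ian unique factorisation domain, which is classical; and $\bZ[x,x\inv]\powers{y}$ is a UFD for the following reason: its coefficient ring $\bZ[x,x\inv]$ is a regular \textsc{Noether}ian UFD, so $\bZ[x,x\inv]\powers{y}$ is a regular \textsc{Noether}ian domain, hence locally factorial, and its \textsc{Picard} group --- equivalently, since the ring is locally factorial, its divisor class group --- is unchanged on passing from $\bZ[x,x\inv]$ to $\bZ[x,x\inv]\powers{y}$ (the ideal $(y)$ lies in the \textsc{Jacobson} radical, since each $1 + fy$ is a unit, and the ring is $y$\nbd-adically complete), hence is trivial, so that $\bZ[x,x\inv]\powers{y}$ is a UFD. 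Then $\bZ\nov{x,y} = \bZ\powers{x,y}[(xy)\inv]$ and $\bZ[x,x\inv]\nov{y} = \bZ[x,x\inv]\powers{y}[y\inv]$ are localisations of these rings and therefore UFDs as well.

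For the units in part~(a), I would first record the standard fact that a power series $\sum a_{ij} x^i y^j$ in $\bZ\powers{x,y}$ is a unit if and only if $a_{00} = \pm 1$: necessity by comparing constant terms, sufficiency by the $(x,y)$\nbd-adically convergent geometric series inverting $1 - (\text{an element of the maximal ideal})$. Next I would observe that $\bZ\nov{x,y} = \bZ\powers{x,y}[x\inv, y\inv]$ is the localisation of $\bZ\powers{x,y}$ at the multiplicative set generated by the non-associate prime elements $x$ and $y$, and that in a UFD an element of such a localisation is a unit exactly when, up to a unit of the base ring, it is a product of integer powers of the inverted primes. Combining the two facts, the units of $\bZ\nov{x,y}$ are precisely the elements $x^k y^\ell u$ with $k, \ell \in \bZ$ and $u$ a unit of $\bZ\powers{x,y}$, which is the assertion.

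For the units in part~(b), I would argue directly, viewing $\bZ[x,x\inv]\nov{y}$ as the ring of formal \textsc{Laurent} series $\sum_{j \geq j_0} a_j y^j$ in $y$ with coefficients $a_j$ in $\bZ[x,x\inv]$. Given $f \neq 0$, let $a_{j_0}$ be its lowest non-zero coefficient; since $\bZ[x,x\inv]$ is a domain, the lowest coefficient of a product equals the product of the lowest coefficients, so if $f$ is a unit then $a_{j_0}$ is a unit of $\bZ[x,x\inv]$, that is, $a_{j_0} = \pm x^k$. Conversely, if $a_{j_0} = \pm x^k$ then $y^{-j_0} f$ is a genuine power series in $y$ whose $y^0$\nbd-coefficient is $\pm x^k$; dividing by $\pm x^k$ leaves a series $1 + y g$ with $g \in \bZ[x,x\inv]\powers{y}$, which is a unit of $\bZ[x,x\inv]\powers{y}$ by the $y$\nbd-adic geometric series, so $y^{-j_0} f$, and hence $f$, is a unit. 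Notice that this last argument uses only that $\bZ[x,x\inv]$ is a domain with unit group $\{\pm x^k \mid k \in \bZ\}$, not factoriality. The step I expect to be the main obstacle is instead the factoriality of the power series rings, in particular of $\bZ[x,x\inv]\powers{y}$: a power series ring over a UFD need not be a UFD, so one genuinely needs the regularity of the coefficient ring together with the invariance of the \textsc{Picard} group under adjoining the power series variable. The remaining steps are routine leading-term bookkeeping.
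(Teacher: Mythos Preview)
The paper explicitly omits the proof of this lemma: immediately before the statement it says ``We omit the verification of the following purely algebraic facts'', and the statement itself is closed with a bare \textsc{qed} symbol. So there is no proof in the paper to compare against; your argument supplies what the authors chose to leave out.

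Your proof is correct. The unit computations are straightforward and your treatment is clean: for~(a) you correctly identify $x$ and~$y$ as non-associate primes of the UFD $\bZ\powers{x,y}$ and read off the units of the localisation; for~(b) the leading-term argument using that $\bZ[x,x\inv]$ is a domain with unit group $\{\pm x^k\}$ is the natural one. On the factoriality side, $\bZ\powers{x,y}$ being a UFD is indeed classical, and your route to the factoriality of $\bZ[x,x\inv]\powers{y}$ via regularity plus invariance of the \textsc{Picard} group under passage to the $y$\nbd-adic completion is correct and is essentially the standard argument. You are right to single this out as the only non-routine step and to note that ``$A$ a UFD $\Rightarrow$ $A\powers{y}$ a UFD'' fails in general, so that regularity of the coefficient ring genuinely carries the weight. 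One could alternatively cite this step as a direct instance of the theorem that a power series ring over a regular UFD is again a UFD, but your unpacked version amounts to the same thing.
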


\begin{example}
  Let $\mu \in \bZ[x,x\inv,y,y\inv]$. The two-step cochain complex~$C$
  given by
  \[\ldots \rTo 0 \rTo \bZ[x,x\inv,y,y\inv] \rTo^{\cdot \mu}
  \bZ[x,x\inv,y,y\inv] \rTo 0 \rTo \ldots\] is $\bZ$\nbd-finitely
  dominated if and only if $\mu$~is a \textsc{Laurent} monomial with
  coefficient~$\pm 1$ (that is, is a unit).
\end{example}

\begin{proof}
  The case $\mu= 0$ is trivial. Otherwise, $C$~is $\bZ$\nbd-finitely
  dominated if and only if the chain complexes listed
  in~\eqref{eq:cond_edge} and~\eqref{eq:cond_vertex} are acyclic,
  which happens if and only if multiplication by~$\mu$ is surjective
  after tensoring with the eight rings $R[x,x\inv]\nov{y^{\pm 1}}$,
  $R[y,y\inv]\nov{x^{\pm 1}}$ and $R\nov{x^{\pm 1}, y^{\pm 1}}$; note
  that injectivity is automatic. This happens if and only if
  $\mu$~becomes a unit in all the rings in questions, which happens if
  and only if $\mu$~is a \textsc{Laurent} monomial with coefficient
  $\pm 1$, by Lemma~\ref{lem:units_in_Novikov}.
\end{proof}

\begin{example}
  \label{x:square_domination}
  Let $\mu = 1 + x \big( y^2 + x(1+y) \big)$ and $\nu =
  x+y\big(1+y(1+x^2) \big)$. The non-acyclic cochain complex~$C$ with
  non-trivial part
  \[\bZ[x,x\inv,y,y\inv] \rTo^\alpha \begin{matrix}
    \bZ[x,x\inv,y,y\inv] \\ \oplus \\
    \bZ[x,x\inv,y,y\inv] \end{matrix} \rTo^\beta
  \bZ[x,x\inv,y,y\inv]\] given by
  \[\alpha = \begin{pmatrix}\mu\\\nu\end{pmatrix} \quad \text{and}
  \quad \beta = \big( -\nu,\, \mu \big)\] is $\bZ$-finitely
  dominated.
\end{example}

\begin{proof}
  Up to isomorphism and re-indexing, the cochain complex can be
  obtained by computing the iterated mapping cone of the square
  diagram
  \begin{displaymath}
    \begin{diagram}[small,labelstyle=\scriptstyle]
      \bZ[x,x\inv,y,y\inv] & \rTo[l>=2em]^\mu & \bZ[x,x\inv,y,y\inv] \\
      \dTo<\nu && \dTo<\nu \\
      \bZ[x,x\inv,y,y\inv] & \rTo^\mu & \bZ[x,x\inv,y,y\inv]
    \end{diagram}
    \ ,
  \end{displaymath}
  \ie, by taking algebraic mapping cones horizontally (\resp,
  vertically) first, and then taking the mapping cone of the resulting
  vertical (\resp, horizontal) map. Using
  Lemma~\ref{lem:units_in_Novikov} we see that the map $\mu$ becomes
  an isomorphism after tensoring (over~$\bZ[x,x\inv,y,y\inv]$) with
  the rings $\bZ\nov{x,y}$, $\bZ\nov{x\inv,y}$,
  $\bZ[x,x\inv]\nov{y\inv}$, and $\bZ[y,y\inv]\nov{x}$, while $\nu$
  becomes an isomorphism after tensoring with any of the rings
  $\bZ\nov{x,y\inv}$, $\bZ\nov{x\inv,y\inv}$, $\bZ[x,x\inv]\nov{y}$,
  and $\bZ[y,y\inv]\nov{x\inv}$. Consequently in all cases the
  iterated mapping cone, which is isomorphic to $C$ tensored with the
  ring under consideration, will be an acyclic complex. The claim now
  follows from the Main Theorem.
\end{proof}

\section{Finitely dominated covering spaces}

\begin{proposition}
  \label{prop:fd_covering}
  Let $X$ be a connected finite $CW$ complex with universal covering
  space~$\tilde X$ and fundamental group $\pi_1 X = G \times \bZ^2$
  for some group~$G$. Let $Y \rTo X$ be the covering determined by the
  projection $\pi_1 X \rTo \bZ^2$, and let $C$~denote the cochain
  complex which is, up to the re-indexing $C^k = C_{-k}$, the cellular
  $\bZ[\pi_1 X]$\nbd-free chain complex of~$\tilde X$. Then $Y$~is a
  finitely dominated space (\ie, is a retract up to homotopy of a
  finite $CW$\nbd-complex) if and only if all the complexes listed
  in~\eqref{eq:cond_edge} and~\eqref{eq:cond_vertex} are acyclic.
\end{proposition}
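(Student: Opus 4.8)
The plan is to reduce Proposition~\ref{prop:fd_covering} to the Main Theorem by identifying the relevant ring and complex. Set $R = \bZ[G]$, so that $\bZ[\pi_1 X] = \bZ[G \times \bZ^2] \cong R[x,x\inv,y,y\inv] = L$; the generators $x$ and~$y$ correspond to a chosen basis of the $\bZ^2$ factor. First I would observe that since $X$ is a finite $CW$ complex, the cellular chain complex of~$\tilde X$ is a bounded complex of finitely generated free $\bZ[\pi_1 X]$-modules; after the re-indexing $C^k = C_{-k}$ it becomes a bounded cochain complex of finitely generated free $L$-modules, which is precisely the input required by the Main Theorem.

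Next I would invoke the topological translation principle for finite domination of covering spaces, which is essentially due to \textsc{Ranicki} (see \cite{Ranicki-findom}): the covering space $Y \to X$ determined by $\pi_1 X \twoheadrightarrow \bZ^2$ has fundamental group (the kernel)~$G$, and its cellular chain complex is $C$ regarded, by restriction of scalars along $R = \bZ[G] \hookrightarrow L$, as an $R$-module complex. The space~$Y$ is finitely dominated if and only if this complex is $R$-finitely dominated; this is a consequence of Wall's finiteness obstruction theory together with the fact that $Y$ is a connected $CW$ complex dominated by a complex with the homotopy type of a $CW$ complex of finite type exactly when its cellular chain complex over~$\bZ[\pi_1 Y] = R$ is chain homotopy equivalent to a bounded complex of finitely generated projective $R$-modules. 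I would state this as the key geometric input, citing \cite{Ranicki-findom} (and Wall) rather than reproving it.

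With these two identifications in place, the proposition is immediate: $Y$ is finitely dominated $\iff$ $C$ is $R$-finitely dominated $\iff$ (by the Main Theorem, applied to the bounded cochain complex of finitely generated free $L$-modules~$C$) the eight complexes in~\eqref{eq:cond_edge} and~\eqref{eq:cond_vertex} are acyclic. The main obstacle here is not any hard calculation but rather pinning down the topological translation cleanly — in particular making sure that "$Y$ finitely dominated as a space" matches "the cellular chain complex of~$Y$ is $R$-finitely dominated as a chain complex", which requires that $Y$ have finitely presented (equivalently, finitely generated, since we are dealing with a regular covering of a finite complex) fundamental group and be connected, and an appeal to Wall's theorem identifying the homotopy-theoretic finiteness with the algebraic one. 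Since $X$ is a finite complex and $G = \ker(\pi_1 X \to \bZ^2)$ is a retract of $\pi_1 X$, these hypotheses are satisfied, and the proof is complete.
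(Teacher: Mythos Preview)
Your proposal is correct and follows essentially the same route as the paper: identify $R=\bZ[G]$ and $L=\bZ[\pi_1 X]$, invoke the \textsc{Wall}/\textsc{Ranicki} criterion that a connected $CW$ complex is finitely dominated if and only if its fundamental group is finitely presented and the cellular chain complex of its universal cover is chain-finitely dominated over the group ring, observe that $\tilde X$ is also the universal cover of~$Y$ with $\pi_1 Y=G$, and then apply the Main Theorem. One small imprecision: your parenthetical ``equivalently, finitely generated, since we are dealing with a regular covering of a finite complex'' is not a valid shortcut---finitely generated does not imply finitely presented in general---but you immediately supply the correct argument (that $G$ is a retract of the finitely presented group $\pi_1 X$, hence itself finitely presented, as in \cite[Lemma~1.3]{Wall-finiteness}), so the proof stands.
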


\begin{proof}
  A connected $CW$ complex~$Z$ is finitely dominated if and only if
  $\pi_1 Z$ is finitely presented, and the cellular chain complex of
  the universal covering space of~$Z$ is homotopy equivalent to a
  bounded complex of finitely generated projective $\bZ[\pi_1
  Z]$\nbd-modules, cf.~\cite[\S3]{Ranicki-findom}.

  We apply this criterion to the space $Z = Y$, noting that $\pi_1 Y =
  G$, and that $\tilde X$~is the universal covering space
  of~$Y$. Since $X$~is a finite $CW$ complex, its fundamental group $G
  \times \bZ^2$ is finitely presented; consequently, its retract~$G$
  is finitely presented as well \cite[Lemma~1.3]{Wall-finiteness}.

  We identify the rings $\bZ[G \times \bZ^2]$
  and~$\bZ[G][x,x\inv,y,y\inv]$ by saying that first and second unit
  vector in~$\bZ^2$ correspond to the indeterminates~$x$ and~$y$,
  respectively. Since $G$ is finitely presented, it is enough to show
  that the cochain complex~$C$ is $\bZ[G]$\nbd-finitely dominated,
  which can be detected cohomologically by the Main Theorem applied to
  the complex~$C$ and the ring $R = \bZ[G]$.
\end{proof}

\begin{example}
  Let $Y_0$ be the real plane $\bR^2$, considered as a $CW$ complex
  with $0$\nbd-cells the integral points, $1$\nbd-cells joining
  vertically or horizontally adjacent $0$\nbd-cells, and $2$\nbd-cells
  the integral unit squares. The group~$\bZ^2$ acts evenly\footnote{An
    {\it even\/} action is, by definition, a free and properly
    discontinuous action.} by translation. By suitably attaching
  $\bZ^2$-indexed collections of cells of dimensions $4$, $5$ and~$6$
  we obtain a simply-connected space~$Y$ with an even action of~$\bZ^2$
  such that its cellular cochain complex is
  \begin{itemize}
  \item the cellular complex of~$\bR^2$ in chain levels $0$, $1$
    and~$2$;
  \item the chain complex of Example~\ref{x:square_domination} in
    chain levels $4$, $5$ and~$6$;
  \item trivial otherwise.
  \end{itemize}
  Let $X = Y/\bZ^2$; the projection map $Y \rTo X$ is the universal
  cover, and $\pi_1X = \bZ^2$. It follows from
  Example~\ref{x:square_domination} and
  Proposition~\ref{prop:fd_covering} that $Y$~is a finitely dominated
  $CW$ complex.
\end{example}

\part{Conventions. Multi-complexes}

Throughout the paper we let $R$ denote a unital associative ring
(possibly non-commutative). Modules will be tacitly understood to be
unital right modules, unless a different convention is specified. We
will use cochain complexes of $R$\nbd-modules, or of modules over a
ring related to~$R$; that is, our differentials {\it increase\/} the
degree.

\section{Lower triangular cochain complexes}

\begin{definition}
  \label{def:triangle_complexes}
  Let $n \geq 1$. A {\it lower $n$\nbd-triangular cochain complex\/}
  consists of a cochain complex~$C$ and for each $q \in \bZ$ a module
  isomorphism $\phi_q \colon C^q \iso \bigoplus_{p=1}^n C^{p,q}$ such
  that for each $q$ the composition~$d_{\ell,k}$
  \[C^{k,q} \rTo \bigoplus_{p=1}^n C^{p,q} \rTo[l>=3em]^{\phi_q\inv}
  C^q \rTo C^{q+1} \rTo[l>=3em]^{\phi_{q+1}} \bigoplus_{p=1}^n
  C^{p,q+1} \rTo C^{\ell, q+1}\] is the zero map whenever $n \geq k >
  \ell \geq 1$.
\end{definition}

In other words, under the isomorphisms $\phi_q$ the differential
of~$C$ becomes a lower triangular matrix of the form
\[D = 
\begin{pmatrix}
  d_{1,1} & 0 & 0 & 0 & \cdots & 0 \\
  d_{2,1} & d_{2,2} & 0 & 0 & \cdots & 0 \\
  d_{3,1} & d_{3,2} & d_{3,3} & 0 & \cdots & 0 \\
  \vdots & \vdots & \vdots & \ddots & \ddots & 0 \\
  d_{n-1,1} & d_{n-1,2} & \cdots & \cdots & d_{n-1,n-1} & 0 \\
  d_{n,1} & d_{n,2} & \cdots & \cdots & d_{n,n-1} & d_{n,n}
\end{pmatrix}\] with $d_{\ell,k} \colon C^{k,q} \rTo C^{\ell, q+1}$
satisfying $D \circ D = 0$. We have $d_{k,k} \circ d_{k,k} = 0$ for $1
\leq k \leq n$ so that {\it $C^{k,*}$ is a cochain complex with
differential~$d_{k,k}$} .

\begin{definition}
  \label{def:triangle_maps}
  Let $C$ and~$D$ be lower $n$\nbd-triangular cochain complexes, with
  structure isomorphisms $\phi_q$ and~$\psi_q$, respectively. A {\it
    map $f \colon C \rTo D$ of lower triangular cochain complexes\/}
  is a map of cochain complexes~$f$ such that the
  composition~$f_{\ell, k}$
  \[C^{k,q} \rTo \bigoplus_{p=1}^n C^{p,q} \rTo^{\phi_q\inv} C^q \rTo^f
  D^q \rTo^{\psi_q} \bigoplus_{p=1}^n D^{p,q} \rTo D^{\ell, q}\]
  is the zero map whenever $n \geq k > \ell \geq 1$.
\end{definition}

In other words, under the isomorphisms $\phi_q$ and~$\psi_q$ the map~$f$
becomes a lower triangular matrix of the form
\[F = 
\begin{pmatrix}
  f_{1,1} & 0 & 0 & 0 & \cdots & 0 \\
  f_{2,1} & f_{2,2} & 0 & 0 & \cdots & 0 \\
  f_{3,1} & f_{3,2} & f_{3,3} & 0 & \cdots & 0 \\
  \vdots & \vdots & \vdots & \ddots & \ddots & 0 \\
  f_{n-1,1} & f_{n-1,2} & \cdots & \cdots & f_{n-1,n-1} & 0 \\
  f_{n,1} & f_{n,2} & \cdots & \cdots & f_{n,n-1} & f_{n,n}
\end{pmatrix}\] with $f_{\ell, k} \colon C^{k,q} \rTo
D^{\ell,q}$. Then {\it $f_{k,k}$ is a cochain map from~$C^{k,*}$
  to~$D^{k,*}$}.

\begin{lemma}
  \label{lem:triangle_maps}
  Let $f \colon C \rTo D$ be a map of lower $n$\nbd-triangular cochain
  complexes. Suppose that, in the notation used above, the maps
  \[f_{k,k} \colon C^{k,*} \rTo D^{k,*} \qquad \text{(\(1 \leq k \leq
    n\))}\] are quasi-isomorphisms of cochain complexes. Then $f$~is a
  quasi-iso\-mor\-phism.
\end{lemma}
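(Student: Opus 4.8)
The plan is to argue by induction on~$n$, peeling off one triangular layer at a time and closing with the five lemma. For $n = 1$ there is nothing to do, since then $f = f_{1,1}$ is a quasi-isomorphism by assumption.

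For the inductive step I would first identify the relevant sub- and quotient complexes. The vanishing $d_{\ell,k} = 0$ for $k > \ell$ from Definition~\ref{def:triangle_complexes} means that, under the structure isomorphisms~$\phi_q$, the differential of~$C$ sends $C^{k,q}$ into $\bigoplus_{p \geq k} C^{p,q+1}$. Hence $C' := \bigoplus_{p=2}^{n} C^{p,*}$ is a subcomplex of~$C$; restricting the~$\phi_q$ makes it a lower $(n-1)$\nbd-triangular cochain complex with diagonal pieces $C^{2,*}, \dots, C^{n,*}$, and the quotient $C/C'$ is canonically the cochain complex $C^{1,*}$ with differential~$d_{1,1}$. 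The analogous data for~$D$ gives a subcomplex~$D'$ and quotient~$D^{1,*}$. Since $f_{\ell,k} = 0$ for $k > \ell$ by Definition~\ref{def:triangle_maps}, the map~$f$ sends $C^{k,q}$ into $\bigoplus_{p \geq k} D^{p,q}$; therefore $f$ restricts to a map $f' \colon C' \rTo D'$ of lower $(n-1)$\nbd-triangular cochain complexes whose diagonal pieces are $f_{2,2}, \dots, f_{n,n}$, and $f$ induces on the quotients precisely $f_{1,1} \colon C^{1,*} \rTo D^{1,*}$.

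Now the diagonal pieces $f_{2,2}, \dots, f_{n,n}$ of~$f'$ are quasi-isomorphisms by hypothesis, so the inductive hypothesis applies and $f'$ is a quasi-isomorphism; and $f_{1,1}$ is a quasi-isomorphism by hypothesis. Consider the two short exact sequences of cochain complexes
\[ 0 \rTo C' \rTo C \rTo C^{1,*} \rTo 0 \qquad \text{and} \qquad 0 \rTo D' \rTo D \rTo D^{1,*} \rTo 0 \ , \]
linked by the cochain maps $f'$, $f$, and $f_{1,1}$. Applying cohomology produces a morphism of long exact sequences in which the vertical maps arising from~$f'$ and from~$f_{1,1}$ are all isomorphisms; the five lemma then forces $H^q(f) \colon H^q(C) \rTo H^q(D)$ to be an isomorphism for all~$q$. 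Thus $f$ is a quasi-isomorphism, completing the induction.

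No step here is genuinely hard; the points that warrant care are the verification that $C'$ and~$D'$ are subcomplexes inheriting the lower $(n-1)$\nbd-triangular structure and that $f$ preserves the filtration --- all immediate consequences of the off-diagonal vanishing in Definitions~\ref{def:triangle_complexes} and~\ref{def:triangle_maps} --- after which only the standard extension-and-five-lemma argument remains, which works verbatim for cochain complexes with increasing differentials.
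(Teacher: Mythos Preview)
Your proof is correct and follows essentially the same approach as the paper: both use the filtration by the subcomplexes $\bigoplus_{p \geq k} C^{p,*}$ together with the Five Lemma applied to the resulting short exact sequences. The only cosmetic difference is that you package the argument as an induction on~$n$, whereas the paper unrolls it explicitly by defining $C(k) = \bigoplus_{p=k}^{n} C^{p,*}$ and showing successively that $f(n), f(n-1), \ldots, f(1) \cong f$ are quasi-isomorphisms.
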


\begin{proof}
  The Lemma is a tautology for $n=1$. --- For $1 \leq k \leq n$ define
  a cochain complex $C(k)$ by setting
  \[C(k)^q = \bigoplus_{p=k}^n C^{p,q} \ ,\]
  equipped with differential 
  \[\begin{pmatrix}
    d_{k,k} & 0 & 0 & 0 & \cdots & 0 \\
    d_{k+1,k} & d_{k+1,k+1} & 0 & 0 & \cdots & 0 \\
    d_{k+2,k} & d_{k+2,k+1} & d_{k+2,k+2} & 0 & \cdots & 0 \\
    \vdots & \vdots & \vdots & \ddots & \ddots & 0 \\
    d_{n-1,k} & d_{n-1,k+1} & \cdots & \cdots & d_{n-1,n-1} & 0 \\
    d_{n,k} & d_{n,k+1} & \cdots & \cdots & d_{n,n-1} & d_{n,n}
  \end{pmatrix}\] (this is the lower right-hand part of the matrix~$D$
  above). We note that $C(k+1)$~is a subcomplex of~$C(k)$, and that
  the quotient $C(k)/C(k+1)$ is nothing but~$C^{k,*}$. Clearly $C(n) =
  C^{n,*}$, and $C(1)$~is isomorphic to~$C$ via the structure
  isomorphisms~$\phi_q$.

  We define the analogous objects $D(k)$ associated to the cochain
  complex~$D$; the remarks on the~$C(k)$ apply {\it mutatis mutandis}.

  The map~$f$ (or rather, its matrix representation~$F$) restricts to
  cochain complex maps $f(k) \colon C(k) \rTo D(k)$, with $f(n) =
  f_{n,n}$ and $f(1)$~being isomorphic to~$f$. The maps $f(k)$ fit
  into commutative ladder diagrams
  \begin{diagram}[PS]
    0 & \rTo & C(k+1) & \rTo & C(k) & \rTo & C^{k,*} & \rTo & 0 \\
    && \dTo<{f(k+1)} && \dTo<{f(k)} && \dTo<{f_{k,k}} \\
    0 & \rTo & D(k+1) & \rTo & D(k) & \rTo & D^{k,*} & \rTo & 0 \\      
  \end{diagram}
  for $1 \leq k < n$, inducing a ladder diagram of long exact
  cohomology sequences as usual. Using the Five Lemma, and the fact
  that the $f_{k,k}$ are known to be quasi-isomorphisms by hypothesis,
  we conclude in turn that the maps $f(n-1),\ f(n-2),\ \cdots,\
  f(1)\iso f$ are quasi-isomorphisms, thereby proving the Lemma.
\end{proof}

\section{Double complexes}
\label{sec:double-complexes}

A {\it double complex\/} $D^{*,*}$ is a $\bZ \times \bZ$\nbd-indexed
collection $\big(D^{p,q}\big)_{p,q \in \bZ}$ of right $R$\nbd-modules
together with ``horizontal'' and ``vertical'' differentials
\[d_h \colon D^{p,q} \rTo D^{p+1,q} \quad \text{and}
\quad d_v \colon D^{p,q} \rTo D^{p,q+1}\] which satisfy the conditions
\[d_h \circ d_h = 0 \ , \quad d_v \circ d_v = 0 \ , \quad d_h \circ
d_v + d_v \circ d_h =0\ .\] Note that the differentials
anti-commute. We will in general consider unbounded double complexes
so that $D^{p,q} \neq 0$ may occur for $|p|$ and $|q|$ arbitrarily
large.

\begin{definition}\label{def:directsumtot}
  Let $D^{\ast,\ast}$ be a double complex. We define its {\em direct
    sum totalisation} to be the cochain complex $\totds D^{\ast,\ast}$
  which in cochain level $n$ is given by the direct sum \[ \big(\totds
  D^{\ast,\ast}\big)^n = \bigoplus_p D^{p,n-p}\,;\] the differential
  is given by $d_h+d_v$, where $d_h$ and $d_v$ are the ``horizontal''
  and ``vertical'' differentials of $D^{\ast,\ast}$ respectively.
\end{definition}

We will make use of the following standard result when comparing
double complexes and the direct sum totalisation of each.

\begin{lemma}
  \label{lemma:dblcpx}
  Let $h\colon D^{\ast,\ast}\rTo E^{\ast,\ast}$ be a map of double
  complexes which are concentrated in finitely many columns. If $h$ is
  a quasi-isomorphism on each column or on each row, then the induced
  map \[\totds(h)\colon \totds D^{\ast,\ast} \rTo \totds
  E^{\ast,\ast}\] is a quasi-isomorphism.
\end{lemma}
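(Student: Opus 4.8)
The plan is to reduce the statement about the direct sum totalisation to the long-exact-sequence bookkeeping already carried out in Lemma~\ref{lem:triangle_maps}, by exploiting the finite column support. First I would fix an integer~$N$ and a finite interval $[a,b] \subseteq \bZ$ so that $D^{p,q} = E^{p,q} = 0$ whenever $p \notin [a,b]$; then for every cochain level~$n$ the direct sum $\big(\totds D\big)^n = \bigoplus_{p=a}^{b} D^{p,n-p}$ is a \emph{finite} direct sum, and likewise for~$E$. This finiteness is what makes the totalisation honestly a lower $n$-triangular cochain complex in the sense of Definition~\ref{def:triangle_complexes}, with $n = b-a+1$: index the summands by $p = b, b-1, \dots, a$ (so that the ``first'' strand is the rightmost column), let the diagonal pieces be the columns $D^{b,*}, \dots, D^{a,*}$ each equipped with its own vertical differential~$d_v$, and let the strictly-lower off-diagonal entries be the horizontal differentials~$d_h$ (which shift~$p$ up by one, hence move towards an earlier strand in this numbering). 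The anticommutation relations $d_h^2 = d_v^2 = 0$, $d_h d_v + d_v d_h = 0$ are exactly the relation $D \circ D = 0$ demanded there, and $\totds(h)$ is visibly a map of lower triangular cochain complexes in the sense of Definition~\ref{def:triangle_maps}, since $h$ commutes with both differentials and preserves columns.

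With this identification in place, the case ``$h$ is a quasi-isomorphism on each column'' is immediate: the diagonal components of $\totds(h)$ are precisely the column maps $h^{p,*} \colon D^{p,*} \rTo E^{p,*}$ (possibly up to a harmless sign on the differential, absorbed by an isomorphism of complexes), which are quasi-isomorphisms by hypothesis, so Lemma~\ref{lem:triangle_maps} applies verbatim and gives that $\totds(h)$ is a quasi-isomorphism. The case ``$h$ is a quasi-isomorphism on each row'' is handled symmetrically, by instead numbering the strands according to the row index~$q$ and viewing $d_h$ as the diagonal differential and $d_v$ as the strictly-lower part; here one needs the rows to be supported in finitely many rows for a fixed~$n$, which is automatic because each $\big(\totds D\big)^n$ is a finite sum. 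Alternatively, one can observe that transposing the double complex and negating one differential interchanges the two hypotheses while leaving the totalisation (up to isomorphism) unchanged, so the row case reduces to the column case.

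The only point that requires a moment's care — and the place where the finite-column hypothesis is genuinely used — is the verification that $\totds D$ really is a \emph{lower triangular} cochain complex rather than merely an iterated extension with infinitely many strata: without finiteness of the column support the matrix of the differential would be infinite, and Lemma~\ref{lem:triangle_maps}, which proceeds by finitely many applications of the Five Lemma along the filtration $C(n) \subseteq C(n-1) \subseteq \dots \subseteq C(1)$, would not apply. So the main (modest) obstacle is purely bookkeeping: choosing the strand numbering so that the direction in which $d_h$ shifts the column index matches the ``lower triangular'' convention, and checking that in each cochain degree only finitely many $D^{p,n-p}$ are nonzero so that the finite filtration of Lemma~\ref{lem:triangle_maps} is available. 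Once that is set up, no further computation is needed.
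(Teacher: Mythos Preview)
Your treatment of the column case is correct and is exactly the paper's argument: both identify $\totds D$ as a lower $(b-a+1)$-triangular complex whose diagonal strands are the columns, and then invoke Lemma~\ref{lem:triangle_maps}. (The paper numbers the strands in the opposite order, so that $d_h$ lands in a \emph{higher}-indexed strand and is genuinely strictly-lower-triangular rather than strictly-upper; this is cosmetic.)

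The row case, however, has a real gap. Your ``symmetric'' argument proposes to number the strands by the row index~$q$ and take $d_h$ as the diagonal differential. But the hypothesis bounds only the column support; the rows may be unbounded, so there is no finite~$n$ for which $\totds D$ is lower $n$-triangular with the rows as diagonal strands, and Lemma~\ref{lem:triangle_maps} (which proceeds by a \emph{finite} chain of Five-Lemma steps) does not apply. Your observation that each $\big(\totds D\big)^n$ is a finite sum is true but insufficient: the $b-a+1$ rows contributing at level~$n$ are rows $n-b,\dots,n-a$, and this window slides with~$n$. Any fixed labelling of these $b-a+1$ summands that is consistent across all~$n$ necessarily tracks the \emph{column} index, so the resulting diagonal differential is $d_v$ again, not~$d_h$. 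Your alternative transposition argument fails for the same reason: transposing converts ``finitely many columns'' into ``finitely many rows'', and the column-case proof you have just established needs finitely many columns in the transposed complex --- which is exactly the row-boundedness you do not have.

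The paper explicitly flags this: the triangular reduction for the row case goes through only ``provided our original complex is bounded in the vertical direction as well'', and for the general case the paper invokes the spectral sequence of the row filtration, which converges because the finite column support makes that filtration bounded in each degree. Some genuinely new ingredient beyond Lemma~\ref{lem:triangle_maps} is required here; a spectral sequence, or an equivalent direct staircase argument on the mapping cone, will do.
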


\begin{proof}
  Let us first deal with the case that $h$ is a quasi-isomorphism on
  each column. Note that $\totds D^{\ast,\ast}$ and $\totds
  E^{\ast,\ast}$ can be given the structure of lower
  $n$\nbd-triangular complexes in the sense of
  Definition~\ref{def:triangle_complexes}, for the same~$n$, such that
  $\totds (h)$~is a map of lower triangular complexes in the sense of
  Definition~\ref{def:triangle_maps}. In more detail, let us assume
  that $D^{\ast,\ast}$ and~$E^{\ast,\ast}$ are concentrated in columns
  $1$ to~$n$, for ease of indexing. Then $\big( \totds D^{\ast,\ast}
  \big)^q = \bigoplus_{p=1}^n D^{p, q-p}$ so that $C = \totds
  D^{\ast,\ast}$ has the lower triangular decomposition
  \[C^q = \bigoplus_{p=1}^n C^{p,q} \quad \text{where} \quad C^{p,q} =
  D^{p,p-q} \ .\] A similar decomposition can be defined for~$\totds
  E^{\ast,\ast}$. The hypothesis that $h$~is a quasi-isomorphism on
  each column translates into the hypothesis of
  Lemma~\ref{lem:triangle_maps} which thus implies that $\totds
  (h)$~is a quasi-isomorphism as claimed.

  \smallskip

  If $h$ is a quasi-isomorphism on each row we can apply the same
  reasoning with the roles of rows and columns reversed, provided our
  original complex is bounded in the vertical direction as well. In
  the general case, one can appeal to a spectral sequence
  argument. More precisely, ``filtration by rows'' gives rise to a
  convergent spectral sequence
  \[E_1^{*,*} = H^v (D^{*,*}) \Longrightarrow H^* (\totds D^{*,*}) \
  ,\] and similarly for the bicomplex~$E^{*,*}$; the map $f$ induces a
  map of spectral sequences which is an isomorphism on $E^1$-terms,
  and thus induces a quasi-isomorphism on abutments as claimed.
\end{proof}

\begin{lemma}
  \label{lem:augmented_double}
  Suppose that the double complex $E^{\ast,\ast}$ is concentrated in
  the first quadrant (that is, suppose that $E^{p,q}=0$ if $p<0$ or
  $q<0$). Suppose further that we are given a cochain complex $C$ with
  $C^q = 0$ if $q<0$, and maps $h_q \colon C^q \rTo E^{0,q}$ such that
  for each $q \geq 0$ the sequence
  \[0 \rTo C^q \rTo^{h_q} E^{0,q} \rTo E^{1,q} \rTo E^{2,q} \rTo \cdots\]
  is exact (\ie, is an acyclic cochain complex), and such that the
  composites
  \[C^q \rTo E^{0,q} \rTo E^{0,q+1} \quad\text{and}\quad C^q \rTo
  C^{q+1} \rTo E^{0,q+1}\] agree up to sign. Then there is a
  quasi-isomorphism
  \[C \rTo \totds E^{\ast,\ast}\] induced by the~$h_q$.  
\end{lemma}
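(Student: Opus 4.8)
The plan is to recognise the map $C \rTo \totds E^{\ast,\ast}$ as the totalisation of an augmentation of double complexes, and then reduce to Lemma~\ref{lemma:dblcpx} by a mapping cone argument. First I would form the ``augmented'' double complex $\tilde E^{\ast,\ast}$ with a new column placed in position $p=-1$, setting $\tilde E^{-1,q} = C^q$ with vertical differential that of~$C$ (corrected by a sign if needed), $\tilde E^{p,q} = E^{p,q}$ for $p \geq 0$, horizontal differential $\tilde E^{-1,q} = C^q \rTo^{h_q} E^{0,q} = \tilde E^{0,q}$, and all other differentials inherited from~$E^{\ast,\ast}$. The hypothesis that the two composites $C^q \rTo E^{0,q+1}$ agree up to sign is exactly what is needed to arrange $d_h d_v + d_v d_h = 0$ in the new column after a suitable choice of signs; this is a small bookkeeping step but it is where one must be careful. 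The columns of $\tilde E^{\ast,\ast}$ are: in column $-1$ the complex~$C$, and in column $p \geq 0$ the complex $E^{p,\ast}$; by hypothesis the full augmented rows $0 \rTo C^q \rTo E^{0,q} \rTo E^{1,q} \rTo \cdots$ are exact.

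Next I would observe that $\totds \tilde E^{\ast,\ast}$ is, up to a shift and sign, the mapping cone of the cochain map $C \rTo \totds E^{\ast,\ast}$ induced by the~$h_q$: indeed totalising the short ``exact sequence of columns'' $0 \rTo E^{\ast,\ast} \rTo \tilde E^{\ast,\ast} \rTo C[\,\cdot\,] \rTo 0$ realises $\totds \tilde E^{\ast,\ast}$ as an extension of (a shift of)~$C$ by $\totds E^{\ast,\ast}$, whose connecting map is precisely the map induced by the~$h_q$. Hence it suffices to prove that $\totds \tilde E^{\ast,\ast}$ is acyclic. For this I would apply Lemma~\ref{lemma:dblcpx}: the double complex $\tilde E^{\ast,\ast}$ is concentrated in finitely many columns only if $E^{\ast,\ast}$ is — which is \emph{not} assumed here — so instead I would use the ``quasi-isomorphism on each row'' half of that Lemma, comparing $\tilde E^{\ast,\ast}$ with the zero double complex. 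Each row of $\tilde E^{\ast,\ast}$ is acyclic by hypothesis, so the map $\tilde E^{\ast,\ast} \rTo 0$ is a row-wise quasi-isomorphism; but the rows are first-quadrant-type (bounded on the left, at $p=-1$), so the spectral sequence ``filtration by columns'' — equivalently, the argument given in the proof of Lemma~\ref{lemma:dblcpx} with rows and columns interchanged — converges and gives $\totds \tilde E^{\ast,\ast} \simeq 0$.

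The main obstacle I anticipate is the convergence of the relevant spectral sequence (or equivalently, making the row-wise comparison argument of Lemma~\ref{lemma:dblcpx} applicable): the double complex $E^{\ast,\ast}$ may be unbounded in the vertical direction and in the positive horizontal direction, so a naive ``finitely many columns'' or ``finitely many rows'' hypothesis fails. What saves us is that $\tilde E^{\ast,\ast}$ lives in the region $p \geq -1$, $q \geq 0$; for the filtration by columns (the horizontal filtration), the $E_1$-page in total degree $n$ involves only the finitely many columns $p = -1, 0, \dots, n+1$, so the filtration is bounded in each total degree and the spectral sequence converges strongly. Concretely, rather than quoting a black-box spectral sequence one can argue directly by a filtration-exhaustion / staircase chase on cocycles and coboundaries of $\totds \tilde E^{\ast,\ast}$, using exactness of the augmented rows to correct a cocycle column by column from the left; this is the more hands-on route and avoids any convergence subtleties, at the cost of a slightly longer induction. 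Either way, once $\totds \tilde E^{\ast,\ast}$ is shown acyclic, the long exact sequence of the mapping cone immediately yields that $C \rTo \totds E^{\ast,\ast}$ is a quasi-isomorphism, as claimed.
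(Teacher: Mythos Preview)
Your proposal is correct and follows essentially the same route as the paper, which simply defers to the argument in \textsc{Bott}--\textsc{Tu} (around Proposition~8.8): form the augmented double complex, identify its totalisation with the mapping cone of $C \rTo \totds E^{\ast,\ast}$, and use exactness of the augmented rows (via a spectral sequence or the staircase chase) to conclude acyclicity. One small point of care: the spectral sequence you want is the one whose $E_1$-page is \emph{horizontal} (row) cohomology, which under the paper's conventions arises from the filtration by rows rather than by columns---but convergence holds either way since $\tilde E^{\ast,\ast}$ lives in a shifted first quadrant, and your staircase alternative sidesteps the naming issue entirely.
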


\begin{proof}
  The proof given by \textsc{Bott} and~\textsc{Tu}
  \cite[p.~97]{Bott-Tu} applies to the current situation; see the
  remark following Proposition~8.8 of {\it loc.cit.}.
\end{proof}

\section{Triple complexes}
\label{sec:triple-complexes}

\begin{definition}
  A triple complex $T^{*,*,*}$ is a $\bZ^3$\nbd-indexed family of
  modules $T^{x,y,z}$ together with three anti-commuting differentials
  \[d_x \colon T^{x,y,z} \rTo T^{x+1,y,z} \ , \ d_y \colon T^{x,y,z}
  \rTo T^{x,y+1,z} \ , \ d_z \colon T^{x,y,z} \rTo T^{x,y,z+1} \ .\]
  ``Anti-commuting'' means that the differentials satisfy $d_id_j =
  (\delta_{ij} -1) d_jd_i$.
\end{definition}

The {\em direct sum totalisation} $\totds T^{\ast,\ast,\ast}$ is the
cochain complex with
\[\big(\totds T^{\ast,\ast,\ast}\big)^n = \bigoplus_{x+y+z=n}
T^{x,y,z}\] and differential $d=d_x+d_y+d_z$. We use the same notation
as for the direct sum totalisation of double complexes.

\begin{definition}
  \label{def:partial_tot}
  For a triple complex $T^{\ast,\ast,\ast}$ we denote by
  $\mathrm{Tot}_{x,y}\,T^{\ast,\ast,\ast}$ the {\em partial
    totalisation} of $T^{\ast,\ast,\ast}$ with respect to $x$ and $y$.
  We define this to be the double complex given by
  \[ \big(\mathrm{Tot}_{x,y}\,T^{\ast,\ast,\ast}\big)^{p,q} =
  \bigoplus_{x+y=p} T^{x,y,q} \] with ``horizontal'' differential $d_h
  = d_x+d_y$ and ``vertical'' differential $d_v = d_z$.
\end{definition}

It is easy to verify that
\[\totds\big(\mathrm{Tot}_{x,y}\,T^{\ast,\ast,\ast}\big) = \totds
T^{\ast,\ast,\ast} \ ,\] where $\totds$ denotes the direct sum
totalisation of either a double or a triple complex as determined by
context.

\begin{lemma}
  \label{lem:triple_map}
  Let $f \colon T^{\ast,\ast,\ast} \rTo U^{\ast,\ast,\ast}$ be a map
  of triple complexes concentrated in a finite cubical region
  of~$\bZ^3$. Suppose that $f$~is a quasi-isomorphism of all cochain
  complexes in $z$\nbd-direction (\resp, in $y$\nbd-direction, \resp,
  in $x$\nbd-direction), that is, suppose that
  \[f \colon T^{x,y,*} \rTo U^{x,y,*} \text{ (\resp, } T^{x,*,z} \rTo
  U^{x,*,z}, \text{ \resp, } T^{*,y,z} \rTo U^{*,y,z} \text{ )}\] is a
  quasi-isomorphism for all $x,y \in \bZ$ (\resp, all $x,z \in \bZ$,
  \resp, all $y,z \in \bZ$). Then
  \[\totds (f) \colon \totds T^{\ast,\ast,\ast} \rTo \totds
  U^{\ast,\ast,\ast}\] is a quasi-isomorphism.
\end{lemma}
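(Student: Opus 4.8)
The plan is to reduce the triple-complex statement to the double-complex statement (Lemma~\ref{lemma:dblcpx}) by means of partial totalisation. Suppose first that $f$ is a quasi-isomorphism in the $z$\nbd-direction, i.e.\ $f \colon T^{x,y,*} \rTo U^{x,y,*}$ is a quasi-isomorphism for all $x,y$. Using the identity $\totds\big(\mathrm{Tot}_{x,y}\,T^{\ast,\ast,\ast}\big) = \totds T^{\ast,\ast,\ast}$ noted just before the statement, it suffices to show that the induced map of double complexes $\mathrm{Tot}_{x,y}(f) \colon \mathrm{Tot}_{x,y}\,T^{\ast,\ast,\ast} \rTo \mathrm{Tot}_{x,y}\,U^{\ast,\ast,\ast}$ totalises to a quasi-isomorphism. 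Now $\big(\mathrm{Tot}_{x,y}\,T\big)^{p,q} = \bigoplus_{x+y=p} T^{x,y,q}$ with vertical differential $d_v = d_z$; since $f$ is cube-supported, each column of this double complex is a finite direct sum of the complexes $T^{x,y,*}$, hence $\mathrm{Tot}_{x,y}(f)$ is a quasi-isomorphism on each column. Both double complexes are concentrated in finitely many columns (again by the cubical support hypothesis), so Lemma~\ref{lemma:dblcpx} applies and gives that $\totds\big(\mathrm{Tot}_{x,y}(f)\big) = \totds(f)$ is a quasi-isomorphism.

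For the case in which $f$ is a quasi-isomorphism in the $x$\nbd-direction, I would run the symmetric argument using the partial totalisation $\mathrm{Tot}_{y,z}$ instead of $\mathrm{Tot}_{x,y}$: its columns are (finite direct sums of) the complexes $T^{*,y,z}$, on which $f$ is by hypothesis a quasi-isomorphism, and one again invokes Lemma~\ref{lemma:dblcpx} on the ``column'' alternative. The $y$\nbd-direction case is handled the same way with $\mathrm{Tot}_{x,z}$. Here one should remark that the obvious analogue $\totds\big(\mathrm{Tot}_{y,z}\,T\big) = \totds T$ holds by the same elementary reassociation of the direct sum $\bigoplus_{x+y+z=n} T^{x,y,z}$, and that partial totalisation is functorial for maps of triple complexes, so that $\mathrm{Tot}_{y,z}(f)$ really is a map of double complexes totalising to $\totds(f)$.

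The only genuinely delicate point is the bookkeeping of signs and the verification that $\mathrm{Tot}_{y,z}\,T^{\ast,\ast,\ast}$ (and its $x,z$ counterpart) is indeed a legitimate double complex in the sense of \S\ref{sec:double-complexes}, i.e.\ that the two chosen ``horizontal'' and ``vertical'' differentials anti-commute; this is immediate from the anti-commutation relations $d_id_j = (\delta_{ij}-1)d_jd_i$ among $d_x,d_y,d_z$, exactly as in the case of $\mathrm{Tot}_{x,y}$ treated in Definition~\ref{def:partial_tot}. I do not expect any real obstacle beyond this routine check: because of the cubical support hypothesis we never need the spectral-sequence half of Lemma~\ref{lemma:dblcpx}, and the finite-column condition required by that lemma is automatic. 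Thus the proof is essentially a two-line reduction in each of the three cases, the content being entirely carried by Lemmas~\ref{lem:triangle_maps} and~\ref{lemma:dblcpx} established earlier.
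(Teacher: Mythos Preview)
Your proposal is correct and follows essentially the same approach as the paper: reduce to Lemma~\ref{lemma:dblcpx} via the partial totalisation $\mathrm{Tot}_{x,y}$ (whose columns are finite direct sums of the $z$\nbd-direction complexes), then handle the remaining two cases symmetrically. The paper's proof is more terse, dispatching the $y$- and $x$-direction cases with ``the other cases can be proved in a similar manner,'' whereas you spell out the use of $\mathrm{Tot}_{y,z}$ and $\mathrm{Tot}_{x,z}$ and the routine sign check; but the content is identical.
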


\begin{proof}
  If $f \colon T^{x,y,*} \rTo U^{x,y,*}$ is a quasi-isomorphism for
  all $x,y \in \bZ$, then $\mathrm{Tot}_{x,y}(f) \colon
  \mathrm{Tot}_{x,y} T^{\ast,\ast,\ast} \rTo \mathrm{Tot}_{x,y}
  U^{\ast,\ast,\ast}$ is a map of double complexes satisfying the
  hypotheses of Lemma~\ref{lemma:dblcpx}. Consequently,
  \[\totds (f) = \totds \mathrm{Tot}_{x,y} (f) \colon \totds
  T^{\ast,\ast,\ast} \rTo \totds U^{\ast,\ast,\ast}\] is a
  quasi-isomorphism. --- The other cases can be proved in a similar
  manner.
\end{proof}

\part{Finite domination implies acyclicity}
\label{part:1}

\section{Truncated products. One variable}
\label{sec:realisation}

Let $R$ be a ring with unit, and let $z$ be an indeterminate. We have
obvious $R$\nbd-module isomorphisms
\[R[z] \iso \bigoplus_{\bN} R \ , \quad R[z,z\inv] \iso
\bigoplus_{\bZ} R \ , \quad R\powers{z} \iso \prod_\bN R\] mapping $rz^k$
to the element~$r$ of the $k$th summand or factor on the right. Using
these isomorphisms, we may write elements of these infinite sums or
products as polynomials, \textsc{Laurent} polynomials and formal power
series in~$z$, respectively. Similarly, a formal \textsc{Laurent}
power series in~$z$ (involving finitely many negative powers of~$z$)
corresponds to an element of a ``truncated product'' via the obvious
$R$\nbd-module isomorphism
\[R\powers{z}[z\inv] \iso \bigoplus_{i < 0} R \oplus \prod_{i
  \geq 0} R \ .\] This ring is known as a \textsc{Novikov}
ring, as is its counterpart with $z\inv$ in place of~$z$; we reserve
the notation
\[R\nov{z} = R\powers{z}[z\inv] \quad \text{and} \quad R\nov{z\inv} =
R\powers{z\inv}[z] \ .\]

There is a module theoretic version corresponding to the construction
of the \textsc{Novikov} ring~$R\nov{z}$. Given a $\bZ$\nbd-indexed
family of modules $M_i$ we define the {\it left truncated product\/}
to be the module
\[\prodlt_i M_i = \bigoplus_{i<0} M_i \,\oplus\, \prod_{i \geq 0} M_i \ ;\]
the elements of this truncated product will be written as formal
\textsc{Laurent\/} series $\sum_{i \geq k} m_i z^i$ with $m_i \in
M_i$.  We let $M\nov{z}$ denote the module of formal \textsc{Laurent}
series with coefficients in~$M$,
\[M\nov{z} = \prodlt M = \Big\{ \sum_{i \geq k} m_i z^i \,|\, k \in
\bZ,\ m_i \in M \Big\} \ .\] Note that $M\nov{z}$ carries a canonical
$R\nov{z}$\nbd-module structure described by $z \cdot \sum_{i \geq k}
m_i z^i = \sum_{i \geq k} m_i z^{i+1}$. --- Dually we define the {\it
  right truncated product\/} to be the module
\[\prodrt_i M_i = \prod_{i \leq 0} M_i \,\oplus\, \bigoplus_{i>0}
M_i\] of formal \textsc{Laurent} series which are finite to the
right, and define $M\nov{z\inv}$ by setting
\[M\nov{z\inv} = \prodrt M = \Big\{ \sum_{i \leq k} m_i z^i \,|\, k \in
\bZ,\ m_i \in M \Big\} \ .\] The module $M\nov{z\inv}$ carries an
obvious $R\nov{z\inv}$\nbd-module structure described by $z\inv \cdot
\sum_{i \leq k} m_i z^i = \sum_{i \leq k} m_i z^{i-1}$.

\begin{lemma}
  \label{lem:series}
  Suppose that $M$ is a finitely presented right $R$\nbd-module. There
  is a natural isomorphism of $R\nov{z}$\nbd-modules
  \[\Phi_M \colon M \tensor_R R\nov{z} \rTo^\iso M\nov{z} \ , \quad m
  \tensor \sum_{i \geq k} r_i z^i \mapsto \sum_{i \geq k} mr_i z^i \
  , \] and a similar isomorphism $\Psi_M \colon M \tensor_R
  R\nov{z\inv} \rTo^\iso M\nov{z\inv}$.
\end{lemma}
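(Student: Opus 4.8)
The plan is to reduce the general finitely presented module to the case of finite free modules, for which the statement is easily checked directly. First I would verify that for $M = R$ the map $\Phi_R$ is visibly the canonical isomorphism $R \tensor_R R\nov{z} \rTo R\nov{z}$, and that $\Phi$ is additive in~$M$, so that $\Phi_{R^n}$ is an isomorphism for every finitely generated free module~$R^n$; naturality in~$M$ is immediate from the formula. Since $M$ is finitely presented, choose an exact sequence $R^a \rTo^g R^b \rTo M \rTo 0$. The functor $\nix \tensor_R R\nov{z}$ is right exact, and the functor $M \mapsto M\nov{z} = \prodlt M$ is also right exact because left truncated products commute with finite direct sums and cokernels (a truncated product of cokernels is the cokernel of the induced map of truncated products, as can be checked coefficient-wise on \textsc{Laurent} series). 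Therefore both rows of the natural commutative diagram

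\begin{diagram}[PS]
  R^a \tensor_R R\nov{z} & \rTo & R^b \tensor_R R\nov{z} & \rTo & M \tensor_R R\nov{z} & \rTo & 0 \\
  \dTo<{\Phi_{R^a}} && \dTo<{\Phi_{R^b}} && \dTo<{\Phi_M} \\
  (R^a)\nov{z} & \rTo & (R^b)\nov{z} & \rTo & M\nov{z} & \rTo & 0 \\
\end{diagram}

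are exact, and the two left vertical maps are isomorphisms by the free case, so $\Phi_M$ is an isomorphism by the five lemma (or rather by the fact that a map induced on cokernels by a pair of isomorphisms is an isomorphism). The argument for $\Psi_M$ is identical, using right truncated products in place of left ones.

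The main obstacle, such as it is, is the verification that $M \mapsto M\nov{z}$ is right exact, \ie\ that truncated products of modules behave well with respect to cokernels; this is where finite presentation of~$M$ (rather than mere finite generation) is needed, since it guarantees that the relations module appearing in the presentation can itself be taken finitely generated, and more importantly it is what lets us place $M$ at the end of a length-two free resolution so that the five-lemma argument applies. One should be slightly careful that the maps in the bottom row are indeed the ones induced by $g$ and the projection applied coefficient-wise, and that these agree under $\Phi$ with the maps in the top row; this compatibility is exactly the naturality of~$\Phi$ with respect to the module maps $g$ and $R^b \rTo M$, which holds by direct inspection of the defining formula $m \tensor \sum m_i z^i \mapsto \sum m r_i z^i$. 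Finally, the $R\nov{z}$-linearity of $\Phi_M$ follows from the description $z \cdot \sum m_i z^i = \sum m_i z^{i+1}$ of the module structure on $M\nov{z}$ together with the obvious fact that $\Phi_M$ intertwines multiplication by~$z$ on both sides.
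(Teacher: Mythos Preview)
Your proof is correct and follows exactly the approach the paper indicates: establish the result for finitely generated free modules first, then pass to the general case via a two-step free resolution and a five-lemma argument. One minor remark: the sentence locating the role of finite presentation is slightly off --- right exactness of $M \mapsto M\nov{z}$ holds for all~$M$ (indeed the functor is exact, since both direct sums and products are exact in module categories), and finite presentation is needed only so that both $R^a$ and~$R^b$ have finite rank, which is where you actually know $\Phi_{R^a}$ and~$\Phi_{R^b}$ to be isomorphisms.
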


\begin{proof}
  The proof is standard, details can be found, for example,
  in~\cite[Lemma~2.1]{homology}. One establishes the result for
  finitely generated free $R$\nbd-modules first, and then passes to
  the general case by considering a two-step resolution of~$M$ by
  finitely generated free modules.
\end{proof}

\section{Truncated product totalisation of double complexes}
\label{sec:trtot}

We will consider non-standard totalisation functors for double
complexes formed by using truncated products; this technology has been
used in~\cite{homology} to analyse \textsc{Novikov} cohomology of
algebraic mapping tori. We begin by recalling some definitions and
results useful for our present purposes; later we will extend these
ideas to \textsc{Laurent} rings with two variables.

\begin{definition}
  Let $D^{*,*}$ be a double complex. We define its {\it left truncated
    totalisation\/} to be the cochain complex $\totlt D^{*,*}$ which
  in cochain level~$n$ is given by the left truncated product
  \[\big( \totlt D^{*,*} \big)^n = \prodlt_p D^{p, n-p} \, ;\] 
  the differential~$d$ is induced in an obvious way by the horizontal
  differential $d_h \colon D^{p,q} \rTo D^{p+1,q}$ and the vertical
  differential $d_v \colon D^{p,q} \rTo D^{p,q+1}$: the component
  mapping into the $p$th factor of $\big(\totlt D^{*,*}\big)^{n+1}$ is
  the sum of the horizontal differential coming from the $(p-1)$st
  factor of $\big( \totlt D^{*,*} \big)^n$, and the vertical
  differential coming from the $p$th factor of $\big( \totlt D^{*,*}
  \big)^n$. Explicitly, for an element $x = \sum_{p \geq k} a_p z^p$
  of $\big( \totlt D^{*,*} \big)^n$ we have
  \[d(x) = \sum_{p \geq k} \big( d_h (a_{p-1}) + d_v(a_p) \big) z^p\]
  (where we set $a_{k-1} = 0$ for convenience).

  Dually, we define the {\it right truncated totalisation\/} to be the
  cochain complex $\totrt D^{*,*}$ which in cochain level~$n$ is given
  by the right truncated product
  \[\big( \totrt D^{*,*} \big)^n = \prodrt_p D^{p,n-p}\] with
  differential described by
  \[d \colon \sum_{p \leq k} a_p z^p \mapsto \sum_{p \leq k+1} \big(
  d_h (a_{p-1}) + d_v(a_p) \big) z^p \ .\]
\end{definition}

\begin{proposition}[{\cite[Corollary~6.7]{Bergman}},
  {\cite[Proposition~1.2]{homology}}]
  \label{prop:double_acyclic}
  Suppose the double complex $D^{*,*}$ has exact columns. Then $\totlt
  D^{*,*}$ is acyclic. Dually, if $D^{*,*}$ has exact rows then $\totrt
  D^{*,*}$ is acyclic.
\end{proposition}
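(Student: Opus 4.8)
The plan is to reduce the statement to a purely homological fact about the interaction between truncated products and exactness, exploiting the explicit description of the differential of $\totlt D^{*,*}$ given in the definition just above. Concretely, I would introduce, for each integer $n$, the {\it brutal truncation\/} subcomplexes: let $F_k \big(\totlt D^{*,*}\big)^n = \prod_{p \geq k, p \leq n-?}\dots$ — more usefully, filter the cochain complex $\totlt D^{*,*}$ by the ``number of summand-factors'' in play. Since $\big(\totlt D^{*,*}\big)^n = \bigoplus_{p<0} D^{p,n-p} \oplus \prod_{p \geq 0} D^{p,n-p}$, the direct-sum part is a genuine (increasing) union of finite direct sums, while the product part is a genuine (decreasing) inverse limit of finite products. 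The key observation is that the formula $d(x) = \sum_{p \geq k}\big(d_h(a_{p-1}) + d_v(a_p)\big)z^p$ shows the differential is ``upper triangular'' with respect to the index $p$: it never decreases the lowest index occurring, and modulo raising that index it is exactly the vertical differential $d_v$. So I would set up the short exact sequences of cochain complexes coming from this triangular structure and feed them into a limit argument.

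First I would treat the finitely-many-columns case as a warm-up (not strictly needed, but it clarifies the mechanism): if $D^{p,q} = 0$ outside $0 \leq p \leq n$, then $\totlt D^{*,*}$ coincides with the ordinary direct-sum totalisation, which is just $\totds D^{*,*}$, and this is lower-triangular in the sense of Definition~\ref{def:triangle_complexes} with diagonal pieces $D^{p,*}$ equipped with $d_v$; exactness of columns then gives acyclicity directly by Lemma~\ref{lem:triangle_maps} applied to the zero map, or equivalently by the long exact sequences associated to the subquotient filtration. For the genuinely infinite case I would proceed in two stages. Stage one: handle the ``negative part'' $\bigoplus_{p<0} D^{p,n-p}$. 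Here the summand indices run over a set bounded above ($p<0$) but unbounded below; the differential is still triangular, and since a cochain complex is acyclic iff it is so after passing to the colimit over all finite sub-direct-sums (colimits of cochain complexes are exact, hence commute with cohomology), exactness of the columns $D^{p,*}$ for each fixed $p$ propagates. Stage two, the genuine obstacle: handle the ``non-negative part'' $\prod_{p \geq 0} D^{p,n-p}$, where we are taking an honest infinite product / inverse limit $\prod_{p\geq 0}D^{p,n-p} = \varprojlim_N \prod_{0 \leq p \leq N} D^{p,n-p}$, and inverse limits are {\it not\/} exact in general. The rescue is the triangular shape: the transition maps in the inverse system $\{\prod_{0\leq p\leq N} D^{p,n-p}\}_N$ (as cochain complexes with the induced truncated differential) are {\it surjective}, because one can always extend a finite tuple by zero in the new top slot and the differential formula only couples slot $p$ to slots $p$ and $p+1$. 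A surjective inverse system of cochain complexes satisfies the Mittag-Leffler condition, so $\varprojlim^1$ vanishes on it, and therefore $H^*$ of the inverse limit equals $\varprojlim$ of the $H^*$; since each finite-stage complex is a finite triangular totalisation of exact columns it is acyclic, whence so is the limit.

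Finally I would splice the two parts together: filter $\totlt D^{*,*}$ by the subcomplex $\bigoplus_{p<0}D^{p,*}$ with $d_v$-differential (this is a subcomplex precisely because the triangular differential cannot move mass from a negative index into a non-negative one and then we only need that it's closed — actually the cleaner choice is the subcomplex consisting of elements supported in $p < 0$, which is closed under $d$), with quotient the non-negative-part complex; the associated long exact cohomology sequence, together with acyclicity of sub and quotient established in the two stages, yields acyclicity of $\totlt D^{*,*}$. The dual statement for $\totrt D^{*,*}$ and exact rows follows by the symmetric argument, exchanging the roles of $p$ and $q$ and of direct sums/products with the opposite truncation; alternatively one invokes the references \cite[Corollary~6.7]{Bergman}, \cite[Proposition~1.2]{homology} for the formal duality. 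I expect the Mittag-Leffler/$\varprojlim^1$ step for the product part to be the one point demanding genuine care — specifically, checking that the induced differential on each finite stage really is the totalisation differential of a finite double subcomplex (so that its acyclicity is the already-known finite case) and that the transition maps are degreewise surjective cochain maps.
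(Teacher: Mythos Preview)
Your filtration-and-limits strategy is genuinely different from the paper's proof, which is a direct ``staircase'' diagram chase: given a cocycle $m = \sum_{p \geq k} m_p z^p$, one uses exactness of column~$k$ to find $b_k$ with $d_v(b_k) = m_k$, then exactness of column~$k+1$ to find $b_{k+1}$ with $d_v(b_{k+1}) = m_{k+1} - d_h(b_k)$, and so on inductively; the resulting $b = \sum_{p \geq k} b_p z^p$ lies in the left-truncated product by construction and satisfies $d(b) = m$. No limit machinery is needed.

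Your approach can also be made to work, but the splicing step contains a real error. You assert that the elements supported in indices $p < 0$ form a subcomplex because ``the triangular differential cannot move mass from a negative index into a non-negative one''. This is false: the horizontal differential $d_h \colon D^{-1,q} \to D^{0,q}$ does exactly that, so the coboundary of an element sitting in slot $p=-1$ has a component in slot $p=0$. What is true is the reverse: elements supported in $p \geq 0$ form a subcomplex, since the differential never \emph{lowers} the minimal occurring index. The correct short exact sequence therefore has the product part $\prod_{p \geq 0} D^{p,n-p}$ as subcomplex and the direct-sum part $\bigoplus_{p<0} D^{p,n-p}$ as quotient. With this correction your two stages go through: the quotient is a filtered colimit of its finite truncations $\bigoplus_{-N \leq p \leq -1}$ (now honest subcomplexes of the quotient, since the map from slot~$-1$ to slot~$0$ has been killed), each acyclic by the finite-columns case; and the subcomplex is the inverse limit of its quotients by $\prod_{p > N}$ along a surjective tower, each stage again a finite-column totalisation, to which the Milnor $\varprojlim^{1}$ sequence applies. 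Note, though, that unpacking the Mittag-Leffler step --- choosing compatible coboundary witnesses up the tower --- essentially reproduces the paper's staircase construction, so the extra abstraction buys little here.
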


\begin{proof}
  This can be proved by an elementary diagram chase, associating to
  each cocycle~$m$ in $\big(\totlt D^{*,*}\big)^n$ (\resp, in
  $\big(\totrt D^{*,*}\big)^n$) an element in the module $\big(\totlt
  D^{*,*}\big)^{n-1}$ (\resp, in $\big(\totrt D^{*,*}\big)^{n-1}$)
  with coboundary~$m$. Details can be found in the given references.
\end{proof}

\section{Algebraic mapping $1$-tori}
\label{sec:1-tori}

\begin{definition}
  Let $C$ be a cochain complex of right $R$\nbd-modules, and let $h
  \colon C \rTo C$ be a cochain map. The {\it mapping $1$\nbd-torus
    $\T(h)$ of~$h$} is the $R[z,z\inv]$\nbd-module cochain complex
  \[\T(h) = \mathrm{Cone}\, \big( C \tensor_R R[z,z\inv]
  \rTo[l>=5em]^{h \tensor \id - \id \tensor z} C \tensor_R R[z,z\inv]
  \big)\] where the map ``$z$'' denotes the self map of $R[z,z\inv]$
  given by multiplication by the indeterminate~$z$.
\end{definition}

In this definition ``Cone'' stands for the algebraic mapping cone; if
a map of cochain complexes $f \colon X \rTo Y$ is considered as a
double complex~$D^{*,*}$ concentrated in columns $p=-1,0$ with
horizontal differential~$f$ and the differential of~$X$ modified by
the factor $-1$, then $\mathrm{Cone}\,(f) = \totds
D^{*,*}$. Explicitly, we have $\mathrm{Cone}\, (f)^n = X^{n+1} \oplus
Y^n$, and the differential is given by the following formula:
\begin{align*}
  \mathrm{Cone}\,(f)^n = X^{n+1} \oplus Y^n &\rTo X^{n+2} \oplus
  Y^{n+1} = \mathrm{Cone}\,(f)^{n+1} \\
  (x,\,y) & \mapsto \quad \big(-d (x),\, f(x) + d (y)\big)
\end{align*}

\begin{proposition}
  \label{prop:prop_mt1}
  \begin{enumerate}[{\rm (1)}]
  \item \label{item:T_hom_iso} For homotopic maps $f, g \colon C \rTo
    C$, the mapping $1$-tori $\T(f)$ and $\T(g)$ are isomorphic.
  \item \label{item:T_mather1} \textsc{Mather} trick: For maps $f
    \colon C \rTo D$ and $g \colon D \rTo C$ of cochain complexes, the
    mapping $1$-tori $\T(fg)$ and $\T(gf)$ are homotopy equivalent.
  \item If $C$ is a bounded above complex of $R[z,z\inv]$\nbd-modules
    which are projective as $R$\nbd-modules, then $C$ and $\T(z)$ are
    homotopy equivalent as $R[z,z\inv]$\nbd-module complexes. Here
    ``$z$'' denotes the $R$\nbd-linear self map given by
    multiplication with the indeterminate~$z$, and the mapping
    $1$-torus is formed by considering~$C$ as an $R$\nbd-module
    complex.
  \end{enumerate}
\end{proposition}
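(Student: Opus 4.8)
The plan is to treat the three statements separately, establishing the easier two first and then attacking the third, which carries the real content.

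\smallskip

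For part~\eqref{item:T_hom_iso}, suppose $f$~and~$g$ are homotopic via a homotopy $s \colon C \rTo C$ of $R$\nbd-modules (degree~$-1$, satisfying $f - g = ds + sd$). First I would exhibit an explicit isomorphism $\T(f) \rTo \T(g)$ in each cochain degree~$n$, where $\T(f)^n = C^{n+1} \tensor_R R[z,z\inv] \oplus C^n \tensor_R R[z,z\inv]$: on the level of matrices the map $\begin{pmatrix} \id & 0 \\ s \tensor \id & \id \end{pmatrix}$ (with suitable sign conventions) does the job. The verification that this commutes with the two cone differentials is a direct computation using $f - g = ds + sd$ together with the fact that $s$~commutes with $\id \tensor z$ since $s$~is $R$\nbd-linear and $z$~acts on the coefficient ring. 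Being upper triangular with identity diagonal blocks, the map is invertible, so it is an isomorphism of $R[z,z\inv]$\nbd-module cochain complexes.

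\smallskip

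For part~\eqref{item:T_mather1}, the \textsc{Mather} trick, I would write $\T(fg)$ and $\T(gf)$ as the direct sum totalisations of the evident double complexes and construct explicit chain maps in both directions whose composites are homotopic to the identity. Concretely, the maps $f$~and~$g$ themselves (tensored up to $R[z,z\inv]$) induce maps $\T(gf) \rTo \T(fg)$ and $\T(fg) \rTo \T(gf)$ on the mapping cones, because $f \circ (gf) = (fg) \circ f$ and similarly for~$g$; one then checks that the two composites $\T(fg) \rTo \T(fg)$ and $\T(gf) \rTo \T(gf)$ differ from the identity by a cone-homotopy built out of $\id \tensor z\inv$ (the self-map $h \tensor \id - \id \tensor z$ on $C \tensor_R R[z,z\inv]$ is invertible up to homotopy in a suitable sense because $z$~is a unit). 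This is the standard argument; I would simply spell out the relevant matrices and homotopies.

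\smallskip

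Part~(3) is where the main obstacle lies. Given a bounded-above complex~$C$ of $R[z,z\inv]$\nbd-modules that are projective over~$R$, I want a natural $R[z,z\inv]$-linear homotopy equivalence between $C$ and $\T(z)$, where $\T(z)$~is formed from~$C$ viewed merely as an $R$\nbd-module complex with the $R$\nbd-linear self-map ``multiplication by~$z$''. The natural candidate map $\T(z) \rTo C$ is the one sending $C \tensor_R R[z,z\inv] \oplus C \tensor_R R[z,z\inv]$ to~$C$ by $(a, b) \mapsto \bar{b}$, where $\bar{\,\cdot\,} \colon C \tensor_R R[z,z\inv] \rTo C$ is the $R[z,z\inv]$-linear ``action'' map $c \tensor z^k \mapsto c z^k$ (using the genuine $R[z,z\inv]$\nbd-module structure on~$C$); one checks this is a cochain map because the composite with $z \tensor \id - \id \tensor z$ lands in the kernel of~$\bar{\,\cdot\,}$ — precisely the relation expressing that the two $z$-actions agree. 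The hard part is showing this map is a homotopy equivalence. I expect to do this by recognising $\T(z)$ as (a model for) $C \tensor_R^{\,\mathbb{L}} R[z,z\inv] \tensor_{R[z,z\inv]} \mathrm{something}$, or more elementarily by a direct filtration/induction on the length of~$C$: for $C$~concentrated in a single degree (a projective $R$\nbd-module with a $z$-action), $\T(z)$~is visibly a two-term complex whose cohomology is computed by the Koszul-type complex $C \tensor_R R[z,z\inv] \rTo C \tensor_R R[z,z\inv]$, and since $z$~is invertible on~$C$, a contracting homotopy for the kernel/cokernel can be written down using~$z\inv$. The general bounded-above case then follows by the usual mapping-cone induction, with the boundedness-above hypothesis guaranteeing that the homotopies assemble convergently. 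The technical care needed is exactly in making this convergence and the $R[z,z\inv]$-linearity of the homotopies explicit.
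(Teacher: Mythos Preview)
The paper does not prove this proposition; it simply cites \cite[\S2]{Iteration}. Your explicit arguments for parts~\eqref{item:T_hom_iso} and~\eqref{item:T_mather1} are correct and are the standard ones: the triangular matrix built from the homotopy~$s$ is an honest isomorphism of complexes, and for the \textsc{Mather} trick the key point is that on the cone of $h\otimes\id-\id\otimes z$ the endomorphism $h\otimes\id$ is canonically homotopic to the automorphism $\id\otimes z$, so the composites of your two induced maps are homotopy equivalences.

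For part~(3) your evaluation map $\T(z)\to C$ is the right one, and its cone is acyclic because it is the totalisation of the degreewise short exact sequence $0\to C\otimes_R R[z,z\inv]\to C\otimes_R R[z,z\inv]\to C\to 0$. But the step from quasi-isomorphism to $R[z,z\inv]$-linear \emph{homotopy} equivalence cannot be completed under the stated hypothesis of mere $R$-projectivity, and your single-degree base case already fails. Take $R=k$ a field and $M=k$ with $z$~acting as~$1$; then $M$ is free over~$R$, yet $\mathrm{Hom}_{k[z,z\inv]}\big(M,\,k[z,z\inv]\big)=0$, so there is no nonzero $R[z,z\inv]$-linear map $M\to\T(z)$ at all, let alone a homotopy inverse. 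No manipulation with~$z\inv$ will produce one. The hypothesis under which the argument closes is that $C$~consists of projective $R[z,z\inv]$-modules: then both $\T(z)$ and~$C$ are bounded-above complexes of $R[z,z\inv]$-projectives and any quasi-isomorphism between them is automatically a homotopy equivalence. This stronger hypothesis is exactly what is assumed in the two-variable analogue, Theorem~\ref{thm:properties}\eqref{item:L}, and it holds in every application the paper makes of the present proposition (in Lemma~\ref{lem:edge} the complex~$C$ is projective over the full \textsc{Laurent} ring). So the defect lies in the stated hypothesis rather than in your strategy, but your inductive sketch as written cannot be made to work.
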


\begin{proof}
  This can be found (using the language of chain complexes rather than
  cochain complexes), for example, in \cite[\S2]{Iteration}.
\end{proof}

\section{Mapping 1-tori and totalisation. Applications}

Let $h \colon C \rTo C$ be a self map of a cochain complex~$C$.
Observe that by additivity of tensor products we have an equality of
cochain complexes
\begin{equation}
  \label{eq:mt1}
  \T(h) \tensor_{R[z,z\inv]} R\nov{z} = \mathrm{Cone}\, \big( C \tensor_R
  R\nov{z} \rTo[l>=5em]^{h \tensor \id - \id \tensor z} C \tensor_R R\nov{z}
  \big) \ .
\end{equation}
In the mapping cone on the right we notice that while the cochain
modules are of the form $M \tensor_R R\nov{z}$ the differential is
{\it not\/} of the form $f \tensor_R \id_{R\nov{z}}$ for an
$R$\nbd-linear map~$f$; the reason is the presence of the self map
$\id \tensor z$ which ``raises the $z$\nbd-degree'', and this cannot
happen for maps of the form $f \tensor \id$.

However, if $C$ consists of finitely presented $R$\nbd-modules we can
identify the cochain complex $C \tensor_R R\nov{z}$ with $C\nov{z}$ by
Lemma~\ref{lem:series}, and we can further identify the right hand
side of~\eqref{eq:mt1} with $\totlt (D^{*,*})$ for the following
bicomplex:
\begin{equation}
  \label{eq:D}\left.
  \begin{aligned}
    D^{p,q} &= C^{p+q+1} \oplus C^{p+q} \\
    \noalign{\smallskip}
    d_h &\colon \ \, D^{p,q} \rTo D^{p+1,q} \\
    & \hphantom{\colon\ \,} (x,y) \mapsto (0, -x) \\
    \noalign{\smallskip}
    d_v & \colon \ \, D^{p,q} \rTo D^{p,q+1} \\
    & \hphantom{\colon \ \,} (x,y) \mapsto \big(-d_C(x),\,
    h(x)+d_C(y)\big) \qquad
  \end{aligned}\right\}
\end{equation}
Here $d_C$ denotes the differential of the complex~$C$. We have $d_h
\circ d_h = 0$, and the $p$th column $D^{p,*}$ of~$D^{*,*}$ is the
$p$th shift of~$\mathrm{Cone}\, (h)$, with unchanged differential, so
that $d_v \circ d_v = 0$. Finally, horizontal and vertical
differentials anti-commute: for a typical element $(x,y) \in D^{p,q} =
C^{p+q+1} \oplus C^{p+q}$ we have
\begin{multline*}
  d_v \circ d_h (x,y) = d_v (0, -x) = \big( 0, d_C(-x) \big) = -
  \big(0, d_C(x)\big) \\ = -d_h \big( (-d_C(x), h(x)+d_C(y)) \big) =
  -d_h \circ d_v (x,y) \ .
\end{multline*}
To complete the identification we postulate that the $p$th column
of~$D^{*,*}$ corresponds to the terms with coefficient~$z^p$ in the
formal \textsc{Laurent} series notation for the truncated
product~$\totlt (D^{*,*})$.

\begin{lemma}
  \label{lem:edge}
  Suppose that $C$ is a bounded above cochain complex of projective
  right $R[x,x\inv,y,y\inv]$\nbd-modules. Suppose further that $C$ is
  homotopy equivalent, as an $R$\nbd-module complex, to a bounded
  complex~$B$ of finitely generated projective right
  $R$\nbd-modules. Then the induced cochain complex $C
  \tensor_{R[x,x\inv,y,y\inv]} R[y,y\inv]\nov{x}$ is acyclic.
\end{lemma}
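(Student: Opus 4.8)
The plan is to reduce the two-variable statement to the one-variable Novikov machinery already in place, by treating $R[y,y\inv]\nov{x} = R[y,y\inv]\powers{x}[x\inv]$ as a one-variable Novikov ring over the base ring $S = R[y,y\inv]$. First I would observe that $C$, being a bounded above complex of projective $L = R[x,x\inv,y,y\inv]$-modules, is in particular a complex of projective $S[x,x\inv]$-modules, where we write $L = S[x,x\inv]$. By Proposition~\ref{prop:prop_mt1}\,(3) applied over the ground ring $S$, the complex $C$ is homotopy equivalent, as an $S[x,x\inv]$-module complex, to the algebraic mapping $1$-torus $\T(x)$, where $x$ denotes multiplication by the indeterminate on $C$ regarded as an $S$-module complex. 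Tensoring with $R[y,y\inv]\nov{x} = S\nov{x}$ and using the identification \eqref{eq:mt1}, it therefore suffices to show that the mapping cone
\[\mathrm{Cone}\,\big( C \tensor_S S\nov{x} \rTo^{x \tensor \id - \id \tensor x} C \tensor_S S\nov{x}\big)\]
is acyclic.

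The next step is to bring in the hypothesis that $C$ is $R$-finitely dominated, i.e.\ homotopy equivalent over $R$ to a bounded complex $B$ of finitely generated projective $R$-modules. The point is that finite domination over $R$ forces finite presentation over $S = R[y,y\inv]$ as well, since each $R$-module $B^i$ gives rise to the finitely presented $S$-module $B^i \tensor_R S$, and a homotopy equivalence over $R$ base-changes to one over $S$. Hence $C$ is homotopy equivalent over $S$ to the bounded complex $B \tensor_R S$ of finitely generated projective $S$-modules, and in particular each cochain module in a suitable model for $C$ over $S$ is finitely presented. This is precisely what is needed to invoke Lemma~\ref{lem:series} over the base ring $S$, identifying $C \tensor_S S\nov{x}$ with $C\nov{x}$ and the mapping cone above with the left truncated totalisation $\totlt D^{*,*}$ of the bicomplex $D^{*,*}$ described in~\eqref{eq:D} (formed using the self map $h = x$ and the $S$-differential of $C$).

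By Proposition~\ref{prop:double_acyclic}, $\totlt D^{*,*}$ is acyclic as soon as the columns $D^{p,*}$ of $D^{*,*}$ are exact. Each column is, up to shift, the mapping cone of the self map $x \colon C \rTo C$ of $S$-module complexes; so exactness of the columns amounts to the statement that multiplication by $x$ is a quasi-isomorphism on $C$ viewed as a complex of $S$-modules. But over $S$ the complex $C$ is homotopy equivalent to $B \tensor_R S$, on which multiplication by the indeterminate $x$ is visibly an isomorphism (indeed $x$ acts invertibly on $S[x,x\inv]$-modules, and $B \tensor_R S$ carries the trivial $x$-action coming from $R$-linearity — here one must be slightly careful and use instead the $S$-module model on which the $L$-action restricts; multiplication by $x$ is invertible on any $S[x,x\inv]$-module, hence a quasi-isomorphism on $C$). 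Thus the columns are exact and the cone is acyclic, completing the proof.

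The main obstacle I anticipate is the bookkeeping in the reduction of the two-variable situation to the one-variable set-up: one must consistently regard $L$ as $S[x,x\inv]$ for $S = R[y,y\inv]$, check that all the hypotheses of Proposition~\ref{prop:prop_mt1}, Lemma~\ref{lem:series} and Proposition~\ref{prop:double_acyclic} hold over this intermediate base ring $S$ (in particular that $R$-finite domination yields $S$-finite presentability of the cochain modules, which is what powers Lemma~\ref{lem:series}), and verify that multiplication by $x$ really is a quasi-isomorphism on $C$ over $S$ — the last point being immediate because $x$ acts invertibly on every $S[x,x\inv]$-module, but it is the step where a naive argument could go wrong if one confuses the $R$-linear and $L$-linear structures.
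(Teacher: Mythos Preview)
Your overall strategy---reduce to the one-variable \textsc{Novikov} situation over $S = R[y,y\inv]$, use the mapping $1$\nbd-torus and the truncated totalisation---is exactly the paper's. But there is a genuine gap at the step where you pass from $R$\nbd-finite domination to $S$\nbd-finite domination.

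You write that ``a homotopy equivalence over~$R$ base-changes to one over~$S$'', and conclude that $C \simeq_S B \tensor_R S$. Base change of the $R$\nbd-linear equivalence $C \simeq B$ yields only $C \tensor_R S \simeq_S B \tensor_R S$; since $C$ already carries an $S$\nbd-structure via~$L$, the module $C \tensor_R S$ is far larger than~$C$, and the conclusion does not follow. This is not a technicality you can patch by adjunction: there is no natural $S$\nbd-linear map $C \rTo B \tensor_R S$ induced by~$f$. The gap matters because Lemma~\ref{lem:series} needs the cochain modules to be finitely presented over~$S$, whereas a rank-one free $L$\nbd-module is $\bigoplus_{\bZ} S$ as an $S$\nbd-module, so the identification of $\T(x) \tensor_L S\nov{x}$ with $\totlt D^{\ast,\ast}$ fails if you work with~$C$ directly.

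The paper fills this gap using the \textsc{Mather} trick (Proposition~\ref{prop:prop_mt1}\,(\ref{item:T_mather1})) applied to the variable~$y$: from the $R$\nbd-equivalences $f \colon C \rTo B$ and $g \colon B \rTo C$ one obtains $S$\nbd-linear homotopy equivalences
\[C \ \simeq\ \T(y)\ \simeq\ \T(ygf)\ \simeq\ \T(fyg)\ =:\ A \ ,\]
and $A$ is bounded with finitely generated projective $S$\nbd-modules. Only then does one form the $x$\nbd-torus, now over~$S$; the relevant self-map on~$A$ is the transported map $\alpha x \beta$ (with $\alpha,\beta$ the $S$\nbd-equivalences between $C$ and~$A$), and one must argue that \emph{this} map---not ``multiplication by~$x$'', which is not defined on~$A$---is a homotopy equivalence so that its mapping cone (the columns of~$D^{\ast,\ast}$) is acyclic. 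Your closing paragraph oscillates between the model~$C$ (where $x$ acts invertibly but Lemma~\ref{lem:series} fails) and the finite model (where $x$ does not act); the \textsc{Mather} trick is precisely the device that resolves this tension.
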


\begin{proof}
  Let $f \colon C \rTo B$ and $g \colon B \rTo C$ be mutually inverse
  homotopy equivalences of $R$\nbd-module complexes. There are
  $R[y,y\inv]$\nbd-module homotopy equivalences
  \[C \simeq \T(y) \simeq \T(ygf) \simeq \T(fyg) =: A\] where the
  symbol~``$y$'' denotes the $R$\nbd-linear self map given by
  multiplication by~$y$, cf.~Proposition~\ref{prop:prop_mt1}; note
  that all mapping $1$\nbd-tori here are mapping $1$\nbd-tori of
  $R$\nbd-linear maps. Now since $B$ is bounded and consists of
  finitely generated projective right $R$\nbd-modules, the complex $A
  = \T(fyg)$ is bounded and consists of finitely generated projective
  right $R[y,y\inv]$\nbd-modules.

  Let $\alpha \colon C \rTo A$ and $\beta \colon A \rTo C$ be mutually
  inverse chain homotopy equivalences of $R[y,y\inv]$\nbd-module
  complexes.  There are chain homotopy equivalences of
  $R[x,x\inv,y,y\inv]$\nbd-module complexes
  \[C \simeq \T(x) \simeq \T(x\beta\alpha) \simeq \T(\alpha x\beta)\]
  where ``$x$'' denotes the $R[y,y\inv]$\nbd-linear self map given by
  multiplication by~$x$, cf.~Proposition~\ref{prop:prop_mt1} applied
  to the ring $R[y,y\inv]$ instead of~$R$; note that all mapping
  $1$\nbd-tori here are mapping $1$\nbd-tori of
  $R[y,y\inv]$\nbd-linear maps.

  It follows that the two complexes
  \begin{equation}
    \label{eq:CE}
    C \tensor_{R[x,x\inv,y,y\inv]} R[y,y\inv]\nov{x} \quad \text{and}
    \quad \T(\alpha x \beta) \tensor_{R[x,x\inv,y,y\inv]}
    R[y,y\inv]\nov{x}
  \end{equation}
  are homotopy equivalent. Moreover, $\T(\alpha x \beta)$ is bounded
  and consists of finitely generated projective
  $R[x,x\inv,y,y\inv]$\nbd-modules by our results on~$A$. So we can
  identify the second complex in~\eqref{eq:CE} with $\totlt (D^{*,*})$
  of a certain double complex of $R[y,y\inv]$\nbd-modules. In fact, we
  are dealing with the bicomplex construction given in~\eqref{eq:D}
  above, working over the ring $R[y,y\inv]$ instead of~$R$ and applied
  to the mapping $1$\nbd-torus
    \begin{multline*}
    \T(\alpha x \beta) = \mathrm{Cone}\, \big( A \tensor_{R[y,y\inv]}
    R[x,x\inv,y,y\inv] \\ \rTo[l>=8em]^{(\alpha x \beta) \tensor \id -
      \id \tensor x} C \tensor_{R[y,y\inv]} R[x,x\inv,y,y\inv] \big) \
    .
  \end{multline*}
  Now the map $\alpha x \beta \colon A \rTo A$ is a homotopy
  equivalence so that its mapping cone is acyclic. It follows that the
  columns of~$D^{*,*}$, which are shifted copies of this mapping cone,
  are exact whence $\totlt (D^{*,*})$ is acyclic by
  Proposition~\ref{prop:double_acyclic}. In view of the homotopy
  equivalence of the complexes in~\eqref{eq:CE} this proves the Lemma.
\end{proof}

\begin{corollary}
  \label{cor:x_or_y_acyclic}
  Under the conditions of the Main Theorem part~\eqref{item:findom},
  all the cochain complexes listed under~\eqref{eq:cond_edge} are
  acyclic.
\end{corollary}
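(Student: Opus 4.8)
The plan is to reduce all four assertions to Lemma~\ref{lem:edge} by exploiting the symmetry of the ring pair $R \subseteq L = \LL$. First one records that the hypotheses of Lemma~\ref{lem:edge} hold in the present setting: a bounded cochain complex $C$ of finitely generated free $L$\nbd-modules is in particular bounded above and consists of projective $L$\nbd-modules, and the finite-domination hypothesis~\eqref{item:findom} says precisely that $C$ is homotopy equivalent, as an $R$\nbd-module complex, to a bounded complex $B$ of finitely generated projective $R$\nbd-modules. Hence Lemma~\ref{lem:edge} applies verbatim and shows that $C \tensor_L R[y,y\inv]\nov{x}$ is acyclic, which disposes of one of the four complexes in~\eqref{eq:cond_edge}.

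To treat the remaining three rings, I would use the ring automorphisms of~$L$ that interchange $x$ and~$y$, that send $x \mapsto x\inv$, and that send $y \mapsto y\inv$; each of these restricts to the identity on the subring~$R$. For such an automorphism~$\sigma$, the restriction-of-scalars complex $\sigma^* C$ is again a bounded complex of finitely generated free $L$\nbd-modules, and --- crucially --- its underlying $R$\nbd-module complex coincides with that of~$C$, because $\sigma$ fixes~$R$ pointwise; in particular $\sigma^* C$ is $R$\nbd-finitely dominated. Applying Lemma~\ref{lem:edge} to $\sigma^* C$ and matching $\sigma^* C \tensor_L R[y,y\inv]\nov{x}$ with $C \tensor_L S$ by means of the evident extension of~$\sigma$ to a ring isomorphism $R[y,y\inv]\nov{x} \iso S$ then yields acyclicity of $C \tensor_L R[y,y\inv]\nov{x\inv}$ (using $x \mapsto x\inv$), of $C \tensor_L R[x,x\inv]\nov{y}$ (using $x \leftrightarrow y$), and of $C \tensor_L R[x,x\inv]\nov{y\inv}$ (using the composite of $x \leftrightarrow y$ with $y \mapsto y\inv$).

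Alternatively, and with the same effect, one can simply re-run the proof of Lemma~\ref{lem:edge} with the roles of $x$ and~$y$ interchanged, and --- to obtain the $\nov{x\inv}$ and $\nov{y\inv}$ variants --- with the outer mapping $1$\nbd-torus built from multiplication by the \emph{inverse} indeterminate, so that $\totlt$ and the first half of Proposition~\ref{prop:double_acyclic} are replaced by $\totrt$ and its dual half. Either way, the only point demanding care is the bookkeeping: checking that the group of automorphisms generated by $x \leftrightarrow y$, $x \mapsto x\inv$ and $y \mapsto y\inv$ really carries the single \textsc{Novikov} ring $R[y,y\inv]\nov{x}$ handled by Lemma~\ref{lem:edge} onto each of the four ``edge'' rings in~\eqref{eq:cond_edge} (it acts transitively on them, the stabiliser being generated by $y \mapsto y\inv$), and that restriction of scalars along these automorphisms genuinely preserves the $R$\nbd-module homotopy type. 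Everything else is a direct appeal to Lemma~\ref{lem:edge}.
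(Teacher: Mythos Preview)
Your proposal is correct and follows essentially the same approach as the paper: apply Lemma~\ref{lem:edge} directly for $C \tensor_L R[y,y\inv]\nov{x}$, then use the change of variables $x \mapsto x\inv$ and the swap $x \leftrightarrow y$ to obtain the remaining cases. Your alternative via $\totrt$ and Proposition~\ref{prop:double_acyclic} is exactly the content of the paper's subsequent Remark.
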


\begin{proof}
  For the complex $C \tensor_L R[y,y\inv]\nov{x}$ this is the content of
  the previous Lemma. By a change of variables, leaving $y$~fixed and
  replacing $x$ by~$x\inv$ we get the result for $C \tensor_L
  R[y,y\inv]\nov{x\inv}$. Finally, by a further change of variables
  swapping $x$ and~$y$ we cover the remaining two cases.
\end{proof}

\begin{remark}
  The change of variables $x \mapsto x\inv,\ y \mapsto y$ executed in
  the proof can be avoided by using right truncated totalisations, and
  by changing the definition of the mapping torus to involve $\id
  \tensor z\inv$ rather than $\id \tensor z$.
\end{remark}

\section{Truncated products. Two variables}
\label{sec:truncated_prods}

For a ring~$R$ we define the \textsc{Novikov} ring $R\nov{x,y} =
R\powers{x,y}[(xy)\inv]$ to be the ring of those formal \textsc{Laurent}
power series~$f$ in two variables which have the property that for
some $\ell \geq 0$ we have $x^\ell y^\ell f \in R\powers{x,y}$. That is,
\[R\nov{x,y} = \Big\{ \sum_{p,q \geq k} a_{p,q} x^p y^q \,\big|\, k \in
\bZ,\ a_{p,q} \in R \Big\} \ ,\] equipped with the obvious
multiplication of power series.

There is a related module theoretic construction, to be described
next.

\begin{definition}
  \label{def:truncprodxy}
  Let $M_{j,k}$ be a $\bZ \times \bZ$\nbd-indexed family of
  $R$\nbd-modules. We define their {\it truncated product} (more
  precisely, their {\it below-and-left truncated product\/}) by
  \[\prodblt_{p,q} M_{p,q} = \Big\{ \sum_{p,q \geq k} m_{p,q} x^p y^q
  \,\big|\, k \in \bZ,\ a_{p,q} \in M_{p,q} \Big\} \subseteq
  \prod_{p,q \in \bZ} M_{p,q} \ ,\] and if $M_{p,q} = M$ for all $p,q
  \in \bZ$ we use the notation
  \[M\nov{x,y} = \prodblt_{p,q} M \ .\]
\end{definition}

Note that $M\nov{x,y}$ has an obvious $R\nov{x,y}$\nbd-module structure
given by ``multiplication of \textsc{Laurent\/} series''.

\begin{lemma}
  \label{lem:xyseries}
  Suppose that $M$ is a finitely presented right $R$\nbd-module. There
  is a natural isomorphism
  \begin{align*}
    M \tensor_R R\nov{x,y} &\rTo^\iso M\nov{x,y} \ , \\
    m \tensor \sum_{j,k \geq \ell} r_{j,k} x^j y^k &\mapsto \quad
    \sum_{j,k \geq \ell} mr_{j,k} x^jy^k \ .
  \end{align*}
\end{lemma}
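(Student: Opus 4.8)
The plan is to follow the template of Lemma~\ref{lem:series} verbatim, replacing the single indeterminate~$z$ by the pair~$(x,y)$ and the Novikov ring $R\nov{z}$ by $R\nov{x,y}$. First I would reduce to the case of a finitely generated \emph{free} module: if $M \iso R^n$, then both sides are additive in~$M$, and the proposed map is visibly the direct sum of~$n$ copies of the identity map $R\nov{x,y} \rTo R\nov{x,y}$ under the identifications $R^n \tensor_R R\nov{x,y} \iso \big( R\nov{x,y} \big)^n$ and $(R^n)\nov{x,y} \iso \big( R\nov{x,y} \big)^n$; in particular it is an isomorphism. Here one uses that a truncated product of finitely many copies of~$R\nov{x,y}$ is the corresponding finite product, which agrees with the finite direct sum. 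The same reasoning handles $M \iso R^n$ with the map being natural in the obvious sense.

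Next I would pass to the general finitely presented case using a two-step presentation $R^s \rTo^{\rho} R^t \rTo M \rTo 0$. Tensoring over~$R$ with the flat? --- no, $R\nov{x,y}$ need not be flat --- so instead I would simply apply the right-exact functor $\nix \tensor_R R\nov{x,y}$ to obtain an exact sequence
\[R^s \tensor_R R\nov{x,y} \rTo R^t \tensor_R R\nov{x,y} \rTo M \tensor_R R\nov{x,y} \rTo 0 \ ,\]
and compare it with the sequence obtained by applying the functor $(\nix)\nov{x,y}$, namely $(R^s)\nov{x,y} \rTo (R^t)\nov{x,y} \rTo M\nov{x,y} \rTo 0$. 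For this comparison to work I need the latter sequence to be exact. Exactness at $M\nov{x,y}$ (surjectivity) is clear by applying the construction coefficient-wise to the surjection $R^t \rTo M$. Exactness in the middle requires a small argument: an element $\sum_{p,q \geq k} m_{p,q} x^p y^q \in (R^t)\nov{x,y}$ mapping to~$0$ in~$M\nov{x,y}$ has every coefficient $m_{p,q} \in \ker(R^t \rTo M) = \mathrm{im}(\rho)$, so one may lift each~$m_{p,q}$ to~$R^s$; the lifts assemble into a \emph{single} element of~$(R^s)\nov{x,y}$ precisely because the truncation condition ``$p,q \geq k$ for a common~$k$'' is preserved by lifting coefficient-wise (no growth in the support occurs). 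This uses that $(\nix)\nov{x,y}$ applied to $R^s \rTo^{\rho} R^t$ is, in coordinates, multiplication by a fixed matrix over~$R$, which does not disturb the support condition.

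Finally I would invoke the five-lemma (or a direct diagram chase) on the commutative ladder whose two rows are the two exact sequences above and whose vertical maps are, on the $R^s$- and $R^t$-columns, the isomorphisms established in the free case, and on the $M$-column the map $M \tensor_R R\nov{x,y} \rTo M\nov{x,y}$ under consideration; commutativity is immediate from naturality. It follows that the $M$-column map is an isomorphism. Naturality in~$M$ then follows formally, since both functors and the comparison transformation are defined without reference to the chosen presentation. The main obstacle, modest as it is, is the middle-exactness of the sequence $(R^s)\nov{x,y} \rTo (R^t)\nov{x,y} \rTo M\nov{x,y} \rTo 0$, i.e.\ checking that lifting Laurent-series coefficients through $\mathrm{im}(\rho)$ does not enlarge the (two-sided, below-and-left) support; everything else is bookkeeping. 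As noted, the proof is entirely parallel to \cite[Lemma~2.1]{homology}, so I would simply refer there for the routine parts.
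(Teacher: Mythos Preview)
Your proposal is correct and follows essentially the same approach as the paper: the paper's proof simply reads ``This is similar to Lemma~\ref{lem:series}. We omit the details,'' and Lemma~\ref{lem:series} in turn is proved by first handling finitely generated free modules and then passing to the general case via a two-step free presentation, exactly as you outline. Your additional care in verifying middle-exactness of the sequence $(R^s)\nov{x,y} \rTo (R^t)\nov{x,y} \rTo M\nov{x,y} \rTo 0$ is a detail the paper suppresses but which is indeed the only point requiring thought.
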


\begin{proof}
  This is similar to Lemma~\ref{lem:series}. We omit the details.
\end{proof}

\section{Below and left truncated totalisation of triple complexes}
\label{sec:blt_tot}

Let $D^{*,*,*}$ be a triple complex, cf.~\S\ref{sec:triple-complexes}.
To it we associate its {\it below-and-left truncated totalisation},
denoted $\totblt (D^{*,*,*})$; this is the cochain complex given by
\[\big( \totblt D^{*,*,*} \big)^n = \prodblt_{p,q} D^{p,q,n-p-q}\]
with differential $d = d_x + d_y +d_z$. Explicitly,
\[ d \colon \sum_{p,q \geq k} m_{p,q} x^p y^q \mapsto \sum_{p,q \geq
  k} \big( d_x (m_{p-1,q}) + d_y (m_{p,q-1}) + d_z (m_{p,q}) \big) x^p
y^q\] where $m_{p,q} = 0$ if $p<k$ or $q<k$.

\begin{proposition}
  \label{prop:triple_exact}
  Suppose the triple complex $D^{*,*,*}$ is exact in
  $z$\nbd-direc\-tion; that is, suppose that the chain complexes
  $D^{x,y,*}$ are acyclic for all $x, y \in \bZ$. Then $\totblt
  (D^{*,*,*})$ is acyclic.
\end{proposition}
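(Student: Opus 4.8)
The plan is to mimic the structure of Proposition~\ref{prop:double_acyclic}: the below-and-left truncated totalisation of a double complex with exact columns is acyclic, and the present statement is the triple-complex analogue with the $z$-direction playing the role of the ``column'' direction. The cleanest route is to reduce to the double-complex case by a partial totalisation. Concretely, I would first reorganise $D^{*,*,*}$ into a double complex whose ``vertical'' differential is $d_z$ and whose ``rows'' encode the $(x,y)$-plane, then identify the below-and-left truncated totalisation of the triple complex with the left truncated totalisation of this double complex, and finally invoke Proposition~\ref{prop:double_acyclic}.

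More precisely, the first step is to form, for each fixed pair $(p,q)$, the cochain complex $D^{p,q,*}$ with differential $d_z$; by hypothesis each of these is acyclic. The second step is to assemble these into a double complex $E^{*,*}$ in which the $z$-direction is one axis and the $(p,q)$-data is ``folded'' into a single index along the other axis, equipped with the horizontal differential induced by $d_x + d_y$. The key bookkeeping point is that the below-and-left truncated product $\prodblt_{p,q}$ over the $(x,y)$-plane is not literally a left truncated product $\prodlt$ over a single index, because the truncation condition ``$p,q \geq k$ for some $k$'' is genuinely two-dimensional: it is the intersection of a half-plane condition in $p$ and one in $q$, not a single half-line condition on $p+q$. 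So the direct reduction ``$\totblt$ of a triple complex $=$ $\totlt$ of a double complex'' in the naive indexing fails, and this is the step that needs care.

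I therefore expect the main obstacle to be handling this two-dimensional truncation directly rather than via a formal reduction, and I would instead run the elementary diagram chase of Proposition~\ref{prop:double_acyclic} adapted to two truncation directions at once. Given a cocycle $m = \sum_{p,q \geq k} m_{p,q} x^p y^q$ in $(\totblt D^{*,*,*})^n$, the equation $d(m) = 0$ reads $d_x(m_{p-1,q}) + d_y(m_{p,q-1}) + d_z(m_{p,q}) = 0$ for all $p,q$. One constructs a preimage $s = \sum_{p,q \geq k} s_{p,q} x^p y^q$ with $d_z(s_{p,q}) = m_{p,q} - d_x(s_{p-1,q}) - d_y(s_{p,q-1})$, solving for $s_{p,q}$ by induction on $p+q$ using exactness of $D^{p,q,*}$ in the $z$-direction (noting that for $p < k$ or $q < k$ everything in sight vanishes, so the induction starts correctly and never needs infinitely many terms in any single totalisation degree). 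The crucial verification is that the resulting family $(s_{p,q})$ is itself below-and-left truncated, \ie\ supported in $p,q \geq k$; this is immediate from the recursion since $s_{p,q}$ is only defined (and non-zero) when its input data is, \ie\ when $p,q \geq k$.

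Finally I would note that one may alternatively deduce this from Proposition~\ref{prop:double_acyclic} after all, by first forming the partial totalisation $\mathrm{Tot}_{x,y}\, D^{*,*,*}$ and observing that $\totblt (D^{*,*,*})$ is \emph{not} the same as $\totlt (\mathrm{Tot}_{x,y}\, D^{*,*,*})$ for the reason explained above, so the self-contained chase is genuinely the right approach; the details are a routine but slightly fiddly adaptation of the argument referenced in Proposition~\ref{prop:double_acyclic} (see \cite[Corollary~6.7]{Bergman}, \cite[Proposition~1.2]{homology}) to the two-fold truncation, and I would present them as such.
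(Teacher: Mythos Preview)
Your approach is essentially the same as the paper's: an elementary diagram chase constructing a preimage $\sum s_{p,q} x^p y^q$ term by term via the recursion $d_z(s_{p,q}) = m_{p,q} - d_x(s_{p-1,q}) - d_y(s_{p,q-1})$, using exactness in the $z$\nbd-direction (the paper iterates along L\nbd-shaped boundaries rather than along anti-diagonals $p+q = \mathrm{const}$, but this is cosmetic). One correction of emphasis: the ``crucial verification'' is not that the family $(s_{p,q})$ is below-and-left truncated---that is indeed immediate from the recursion---but rather that the recursion is well-defined, \ie\ that $m_{p,q} - d_x(s_{p-1,q}) - d_y(s_{p,q-1})$ is a $d_z$\nbd-cocycle at each step; this follows from $d(m)=0$, the inductive formula for $d_z(s_{p-1,q})$ and $d_z(s_{p,q-1})$, and anti-commutativity of the differentials, and the paper carries this out explicitly.
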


\begin{proof}
  This is elementary: we prove directly that any cocycle in the
  totalisation is a coboundary. Let $m = \sum_{p,q \geq k} m_{p,q} x^p
  y^q \in \big( \totblt D^{*,*,*} \big)^n$ be such that $d(m) = 0$. By
  definition of~$d$ this means that $d_z (m_{k,k}) = 0$; by exactness
  in $z$\nbd-direction we find an element $b_{k,k} \in D^{k,k,n-1-2k}$
  with $d_z(b_{k,k}) = m_{k,k}$. Now iteratively for $\ell = 1,\, 2,\,
  3,\, \cdots$ we note that
  \[
  \begin{split}
    d_z \big( m_{k+\ell, k} - d_x (b_{k+\ell-1, k}) \big) = d_z
    (m_{k+\ell,k}) + d_x d_z (b_{k+\ell-1, k}) \qquad \qquad \\ = d_z
    (m_{k+\ell,k}) + d_x (m_{k+\ell-1, k}) = 0 \ ,
  \end{split}
  \] using that $d_x d_z = - d_z d_x$ and $d(m) = 0$. By exactness in
  $z$\nbd-direction we find $b_{k+\ell,k}$ with $d_z (b_{k+\ell,k})
  = m_{k+\ell, k} - d_x (b_{k+\ell-1, k})$ so that
  \[d_x (b_{k+\ell-1, k}) + d_z (b_{k+\ell, k}) = m_{k+\ell, k} \ .\]
  Similarly, we note that
  \[
  \begin{split}
    d_z \big( m_{k, k+\ell} - d_y (b_{k, k+\ell-1}) \big) = d_z
    (m_{k, k+\ell}) + d_y d_z (b_{k, k+\ell-1}) \qquad \qquad \\ = d_z
    (m_{k, k+\ell}) + d_y (m_{k, k+\ell-1}) = 0 \ ,
  \end{split}
  \] using that $d_y d_z = - d_z d_y$ and $d(m) = 0$. By exactness in
  $z$\nbd-direction we find $b_{k, k+\ell}$ with $d_z (b_{k, k+\ell})
  = m_{k, k+\ell} - d_y (b_{k, k+\ell-1})$ so that
  \[d_y (b_{k, k+\ell-1}) + d_z (b_{k, k+\ell}) = m_{k, k+\ell} \
  .\]

  So far we have constructed elements of the form $b_{*,k}$ and
  $b_{k,*}$, and proceed now to iterate the construction for $j=k+1,\,
  k+2,\,\cdots$ as follows. First, $d(m) = 0$ implies that $d_x
  (m_{j-1,j}) + d_y (m_{j,j-1}) + d_z (m_{j,j}) = 0$. By construction
  of the previous $b_{p,q}$ we have $m_{j-1,j} = d_x (b_{j-2,j}) + d_y
  (b_{j-1,j-1}) + d_z (b_{j-1,j})$ so that
  \[d_x (m_{j-1,j}) = d_x d_y (b_{j-1,j-1}) -d_z d_x (b_{j-1,j})\]
  and, in the same way,
  \[d_y (m_{j,j-1}) = -d_x d_y (b_{j-1,j-1}) - d_z d_y (b_{j,j-1})\]
  which together results in
  \[d_z \big( m_{j,j} - d_x (b_{j-1,j}) - d_y (b_{j,j-1}) \big) = 0 \ .\]
  (We have used the usual convention that $b_{p,q} = 0$ if $p<k$ or
  $q<k$.) By exactness in $z$\nbd-direction there is $b_{j,j}$ with
  $d_z (b_{j,j}) = m_{j,j} - d_x (b_{j-1,j}) - d_y (b_{j,j-1})$ or,
  re-arranged,
  \[d_x (b_{j-1,j}) + d_y (b_{j,j-1}) + d_z (b_{j,j}) = m_{j,j} \ .\]
  Still keeping $j$ fixed, we now iterate over $\ell = 1,\, 2,\,
  \cdots$ by observing that
  \[d_z \big( m_{j+\ell, j} - d_x (b_{j+\ell-1, j}) - d_y (b_{j+\ell,
    j-1}) \big) = 0 \] so we find $b_{j+\ell, j}$ with $d_z
  (b_{j+\ell, j}) = m_{j+\ell, j} - d_x (b_{j+\ell-1, j}) - d_y (b_{j+\ell,
    j-1})$ or, re-arranged,
  \[d_x (b_{j+\ell-1, j}) + d_y (b_{j+\ell, j-1}) + d_z  (b_{j+\ell,
    j}) = m_{j+\ell, j} \ .\] Again, for the same~$j$ we find in a
  similar manner an element $b_{j, j+\ell}$ with
  \[d_x (b_{j-1, j+\ell}) + d_y (b_{j, j+\ell-1}) + d_z (b_{j,
    j+\ell}) = m_{j, j+\ell} \ .\]

  This finishes both iterations. It remains to observe that, by
  construction, we have shown that
  \[d \Big( \sum_{p,q \geq k} b_{p,q} x^p y^q \Big) = \sum_{p,q \geq
    k} m_{p,q} x^p y^q = m\] so that $m$~is a coboundary as claimed.
\end{proof}

\section{The mapping $2$-torus}
\label{sec:2torus}

Suppose we have a cochain complex~$C$ of $R$\nbd-modules with
differential denoted~$d_C$, and two self maps $f,g \colon C \rTo C$
which commute up to homotopy. Suppose moreover we are given a specific
choice of homotopy $H \colon fg \simeq gf$ such that $d_C H + H d_C =
fg - gf$. To such data we associate a cochain complex of
$\LL$\nbd-modules, the {\it mapping $2$\nbd-torus\/} $\T(f,g; H)$. The
module in degree~$n$ is the direct sum
\begin{equation}
  \label{eq:def_TfgH}
  \begin{split}
    \T(f,g;H)^n = C^{n+2} \tensor_R \LL \, \oplus\, \big(C^{n+1} \tensor_R
    \LL \qquad \quad \\ \qquad \qquad \oplus\, C^{n+1} \tensor_R \LL \big) \,\oplus\, C^n
    \tensor_R \LL \ ,
  \end{split}
\end{equation}
and the coboundary map $d_\T \colon \T(f,g;H)^n \rTo \T(f,g;H)^{n+1}$ is
given by the following matrix:
\begin{equation}
  \label{eq:def_d_T}
  d_\T =
  \begin{pmatrix}
    d_C \tensor \id & 0 & 0 & 0 \\
    \noalign{\smallskip}
    -(g \tensor \id - \id \tensor x) & -d_C \tensor \id & 0 & 0 \\
    \noalign{\smallskip}
    f \tensor \id - \id \tensor y  & 0 & -d_C \tensor \id & 0 \\
    \noalign{\smallskip}
    H \tensor \id & f \tensor \id - \id \tensor y & g \tensor \id -
    \id \tensor x &  d_C \tensor \id
  \end{pmatrix}
\end{equation}
By construction, $\T(f,g;H)$ is a lower $4$-triangular complex in the
sense of Definition~\ref{def:triangle_complexes}.

\begin{remark}
  If $fg=gf$ we may choose $H=0$, and the mapping $2$\nbd-torus
  $\T(f,g;0)$ is nothing but the total complex of the following twofold
  cochain complex concentrated in columns $-2$, $-1$ and~$0$:
  {\small\begin{equation}
    \label{eq:def_Tfg2}
    C \tensor_R \LL \rTo^\alpha
    \begin{matrix}
      C \tensor\limits_R \LL \\ \oplus \\ C \tensor\limits_R^{\vphantom{e}}\LL
    \end{matrix}
    \rTo^\beta C \tensor_R \LL
  \end{equation}}%
  Here the maps $\alpha$ and~$\beta$ are given by the matrices
  \begin{multline*}
    \alpha =
    \begin{pmatrix}
      -\big( g \tensor \id - \id \tensor x \big) \\ f \tensor \id - \id
      \tensor y
    \end{pmatrix}
    \quad \text{and}\\
    \beta =
    \begin{pmatrix}
      f \tensor \id - \id \tensor y\,, & g \tensor \id - \id \tensor x
    \end{pmatrix}
    \ . \qquad\qquad\qquad
  \end{multline*}
  (This twofold cochain complex features {\it commuting\/}
  differentials and thus should be converted to a double complex by
  changing the ``vertical'' differential coming from~$C$ in the middle
  summand by a sign.)
\end{remark}

\subsection*{\hskip 1.8em Properties of the mapping $2$-torus}

For later applications we need to record that the mapping
$2$\nbd-torus has properties analogous to those of the mapping
$1$\nbd-torus of~\S\ref{sec:1-tori}. We keep the notation from above.
\begin{theorem}
  \label{thm:properties}
  \begin{enumerate}[{\rm (1)}]
  \item \label{item:h} Suppose that the maps $f$and~$g$ commute, and
    suppose that we are given a self map $h \colon C \rTo C$ together
    with a homotopy $A \colon h \simeq \id$ so that $d_C A +
    A d_C = h - \id$. Then $h(fAg-gAf)$ is a homotopy from
    $(hf)(hg)$ to~$(hg)(hf)$, and the matrix
    \[\qquad \Phi =
    \begin{pmatrix}
      h \tensor \id & 0 & 0 & 0 \\
      -hgA \tensor \id & h \tensor \id & 0 & 0 \\
       hfA \tensor \id & 0 & h \tensor \id & 0 \\
      h(fAg - gAf)A \tensor \id & -hfA \tensor \id & -hgA \tensor \id &
      h \tensor \id
    \end{pmatrix}\]
    defines a quasi-isomorphism
    \[\Phi \colon \T(f,g;0) \rTo \T\big( hf, hg;
    h(fAg-gAf)\big) \ .\] If $C$ is a bounded above complex of
    projective modules over~$R$ then $\T(f,g;0)$ and~$\T\big(hf,hg;
    h(fAg-gAf)\big)$ are homotopy equivalent.
  \item \label{item:mather} Suppose that the maps $f$and~$g$ commute
    as before. Given cochain maps $\alpha \colon B \rTo C$ and $\beta
    \colon C \rTo B$ and a homotopy $A \colon \alpha\beta \simeq \id$
    so that $d_C A + A d_C = \alpha\beta - \id$, the composite map
    $\beta (fAg - gAf) \alpha$ is a homotopy from $(\beta f
    \alpha)(\beta g \alpha)$ to $(\beta g \alpha)(\beta f \alpha)$,
    the composite map $\alpha \beta(fAg - gAf)$ is a homotopy from
    $(\alpha \beta f) (\alpha \beta g)$ to $(\alpha \beta g) (\alpha
    \beta f)$, and the diagonal matrix with entries $\alpha \tensor
    \id$ on the main diagonal defines a cochain map
    \begin{multline*}
      \qquad \qquad \qquad \qquad \alpha_* \colon \T \big( \beta f
      \alpha, \beta g \alpha; \beta (fAg - gAf) \alpha \big) \\ \rTo
      \T \big( \alpha\beta f, \alpha\beta g; \alpha\beta (fAg - gAf)
      \big) \ .
    \end{multline*}
    If $\alpha$~is a quasi-isomorphism so is~$\alpha_*$. If in
    addition $B$ and~$C$ are bounded above complexes of projective
    $R$\nbd-modules then $\alpha_*$~is a homotopy equivalence.
  \item \label{item:L} If $C$ is actually a complex of
    $\LL$\nbd-modules, then there is an $\LL$\nbd-linear
    quasi-isomorphism $\T(y,x;0) \rTo C$. If $C$ is a bounded above
    complex of projective modules over the \textsc{Laurent} ring $\LL$
    then $C$ and~$\T(y,x;0)$ are homotopy equivalent.
  \end{enumerate}
\end{theorem}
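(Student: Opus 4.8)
The plan is to prove the three parts of Theorem~\ref{thm:properties} in order, establishing each as the mapping $2$\nbd-torus analogue of the corresponding statement of Proposition~\ref{prop:prop_mt1}. For part~\eqref{item:h}, I would first verify the two purely algebraic claims by direct computation: that $h(fAg-gAf)$ really is a homotopy from $(hf)(hg)$ to $(hg)(hf)$ (using $d_C A + A d_C = h - \id$ and the fact that $f$, $g$, $h$, $d_C$ pairwise commute or anti-commute appropriately, so that the coboundary of $h(fAg-gAf)$ telescopes to $hfhg - hghf$), and that the given matrix $\Phi$ commutes with the two coboundary maps $d_\T$ of \eqref{eq:def_d_T}. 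The latter is a $4 \times 4$ matrix identity that one checks entry by entry; the only entries requiring care are those in the bottom row, where the homotopy term $h(fAg-gAf)A \tensor \id$ interacts with the off-diagonal differentials. Then, to see that $\Phi$ is a quasi-isomorphism, I would invoke Lemma~\ref{lem:triangle_maps}: under the lower $4$\nbd-triangular structure on both mapping $2$\nbd-tori, the diagonal entries of $\Phi$ are all $\pm(h \tensor \id)$ acting on shifts of $C \tensor_R \LL$, and since $A$ witnesses $h \simeq \id$ the map $h$ is a homotopy equivalence of $C$, hence $h \tensor \id$ is a quasi-isomorphism on each diagonal block; Lemma~\ref{lem:triangle_maps} then gives that $\Phi$ is a quasi-isomorphism. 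The final sentence follows because over a bounded above complex of projective $R$\nbd-modules a quasi-isomorphism is automatically a homotopy equivalence.

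For part~\eqref{item:mather}, the Mather-trick analogue, the computations that $\beta(fAg-gAf)\alpha$ and $\alpha\beta(fAg-gAf)$ are the asserted homotopies run parallel to part~\eqref{item:h} once one remembers that $A$ only witnesses $\alpha\beta \simeq \id$ on $C$ (not $\beta\alpha \simeq \id$ on $B$). Checking that the diagonal matrix $\alpha_*$ with entries $\alpha \tensor \id$ is a cochain map amounts to comparing, block by block, the off-diagonal entries of the two coboundary matrices: e.g.\ the $(2,1)$ entries $-(\beta g \alpha \tensor \id - \id \tensor x)$ and $-(\alpha\beta g \tensor \id - \id \tensor x)$ must agree after pre- and post-composing with $\alpha \tensor \id$, which holds because $\alpha \circ \beta g \alpha = \alpha\beta g \circ \alpha$ and $\alpha$ commutes with multiplication by~$x$. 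The quasi-isomorphism assertion is again Lemma~\ref{lem:triangle_maps}, since each diagonal block of $\alpha_*$ is $\pm(\alpha \tensor \id)$ on a shift of $B \tensor_R \LL \rTo C \tensor_R \LL$, and $\alpha$ a quasi-isomorphism makes $\alpha \tensor \id$ one too; the homotopy-equivalence upgrade is the usual projectivity argument.

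For part~\eqref{item:L} I would specialise to the case where $C$ is already an $\LL$\nbd-module complex and take $f = y$, $g = x$ (the self maps given by multiplication), which commute on the nose, so $H = 0$ is an honest choice. The claim is that the ``augmentation'' map $\T(y,x;0) \rTo C$ collapsing the extra summands is a quasi-isomorphism. Using the remark that $\T(y,x;0)$ is the totalisation of the twofold complex \eqref{eq:def_Tfg2} with $f=y$, $g=x$, this reduces to showing that the Koszul-type complex $C \tensor_R \LL \rTo C \tensor_R \LL \oplus C \tensor_R \LL \rTo C \tensor_R \LL \rTo C \rTo 0$ is acyclic, i.e.\ that the maps $(\cdot\, y - y\,\cdot)$ and $(\cdot\, x - x\,\cdot)$ on $C \tensor_R \LL$ assemble into a resolution of~$C$ over $\LL$; concretely $C \tensor_R \LL$ is free over $\LL$ on the $R$\nbd-basis of~$C$ (degreewise), and the displayed complex is the tensor product of~$C$ with the standard two-variable Koszul resolution of $\LL = R \tensor_R \LL$ by $\LL$-modules associated with the pair $(x \tensor 1 - 1 \tensor x,\, y \tensor 1 - 1 \tensor y)$ inside $\LL \tensor_R \LL$, which is exact; one can also cite \cite[\S2]{Iteration} as in Proposition~\ref{prop:prop_mt1}. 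Alternatively, and perhaps most cleanly, one observes $\T(y,x;0)$ is the iterated mapping $1$\nbd-torus construction and applies Proposition~\ref{prop:prop_mt1}\eqref{item:T_hom_iso}--(3) twice. The bounded-above projective case again upgrades to a homotopy equivalence.

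The main obstacle will be the bookkeeping in part~\eqref{item:h}: verifying that the full $4 \times 4$ matrix $\Phi$ is a cochain map requires tracking the sign conventions in \eqref{eq:def_d_T} carefully through the homotopy term, and the identity $h(fAg-gAf)A$ in the bottom-left corner is engineered precisely so that the two composites $d_\T \Phi$ and $\Phi d_\T$ match; getting that single entry right (and checking it closes up, given that $A$ does \emph{not} satisfy $A^2 = 0$ or commute with $f,g$ in any obvious way) is where an error is most likely to creep in. Everything else is either a direct diagram check or an appeal to Lemma~\ref{lem:triangle_maps} together with the standard projective-complex homotopy-equivalence principle.
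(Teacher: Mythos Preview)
Your approach to parts~\eqref{item:h} and~\eqref{item:mather} matches the paper's proof: verify the homotopy identities by direct calculation, check that $\Phi$ (resp.\ $\alpha_*$) is a chain map entry by entry with the $(4,1)$ entry being the crux, then invoke Lemma~\ref{lem:triangle_maps} and the standard quasi-isomorphism-to-homotopy-equivalence upgrade for bounded-above projective complexes.

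For part~\eqref{item:L} the paper takes a more elementary route than either of your suggestions. It defines the augmentation $\gamma \colon C \tensor_R L \rTo C$, $z \tensor p \mapsto zp$, and then proves directly, by explicit element-chasing using ``$x$\nbd-amplitude'' and ``$y$\nbd-amplitude'' bookkeeping, that the rows of the resulting augmented double complex are exact; Lemma~\ref{lemma:dblcpx} then finishes. Your Koszul-resolution idea is cleaner and does work, but your formulation needs repair: the claim that ``$C \tensor_R \LL$ is free over~$\LL$ on the $R$\nbd-basis of~$C$'' is unjustified, since part~\eqref{item:L} imposes no freeness or projectivity hypothesis on~$C$ over~$R$ for the quasi-isomorphism statement. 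The correct version is to observe that each row is $C^q \tensor_L (\,\cdot\,)$ applied to the complex $L \tensor_R L \rTo (L \tensor_R L)^2 \rTo L \tensor_R L \rTo L$, which is a resolution of~$L$ by \emph{free left $L$\nbd-modules} (each $L \tensor_R L$ being left-$L$-free because $L$ is $R$\nbd-free on monomials); exactness after tensoring with an arbitrary right $L$\nbd-module~$C^q$ then follows. Your iterated $1$\nbd-torus alternative is less satisfactory: Proposition~\ref{prop:prop_mt1}(3) already requires bounded-above projective hypotheses, so iterating it yields only the homotopy-equivalence clause, not the unconditional quasi-isomorphism; and you would still owe an identification of $\T(y,x;0)$ with the iterated construction, which involves some re-association of tensor products over different ground rings. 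The paper's hands-on argument avoids both issues at the cost of a page of explicit manipulation.
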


\begin{proof}
  \eqref{item:h} First we calculate
  \begin{multline*}
    d_C h(fAg-gAf) + h(fAg-gAf) d_C = hf (d_C A
    + A d_C) g - hg (d_C A
    + A d_C)f \\ = hf(h-\id)g - hg(h-\id)f = hfhg - hghf
  \end{multline*}
  (recall that $fg=gf$) so that $h(fAg-gAf)$ is a homotopy from
  $(hf)(hg)$ to~$(hg)(hf)$ as claimed. Next, we need to check that
  $\Phi$~defines a cochain map. Let $\hat d_\T$ denote the differential
  of $\T\big( hf, hg; h(fAg-gAf)\big)$; it is given by a matrix similar
  to~\eqref{eq:def_d_T} with $f$ and~$g$ replaced by~$hf$ and~$hg$,
  respectively, and the homotopy~$H$ replaced by $h(fAg-gAf)$. We need
  to verify that $\hat d_\T \Phi = \Phi d_\T$. The calculation is
  tedious but straightforward; we concentrate on the first entry in
  the fourth row of the product, leaving the others as exercises to
  the reader. In the case at hand we have to show that the two sums
  \[\begin{split}
   \big( \Phi d_\T \big)_{4,1} = h(fAg-gAf)Ad_C \tensor \id 
    + hfAg \tensor \id - hfA \tensor x
    \qquad\qquad \\
    - hgAf \tensor \id + hgA \tensor y
  \end{split}\] on the one hand, and
  \[\begin{split}
    \big(\hat d_\T \Phi \big)_{1,4} = h(fAg-gAf)h \tensor \id - hfhgA
    \tensor \id + hgA \tensor y \qquad\qquad \\
    + hghfA \tensor \id - hfA \tensor x + d_C h(fAg-gAf)A \tensor \id
  \end{split}\] on the other hand, are equal. By cancelling equal
  terms and re-arranging, remembering that $f$, $g$ and~$h$ commute
  with~$d_C$, this amounts to verifying the equality
  \[
  \begin{split}
    hfAgAd_C - hfd_CAgA + hgd_CAfA -
    hgAfAd_C
    \qquad \qquad\qquad \\
    = hgAf -hfAg -hfhgA +hghfA + hfAgh - hgAfh \ .
  \end{split}\] Since $f$ and~$g$ are cochain maps we can now add the
  zero-term
  \[hfA(gd_C - d_Cg)A + hgA(d_C f -
  f d_C)A\] on the left hand side, and collect terms on the
  right, to get the equivalent equation
  \begin{multline*}
    hfAg(Ad_C + d_CA) - hf(A d_C + d_C
    A)gA \\ 
    + hg(d_CA + Ad_C)fA - hgAf(Ad_C
    + d_CA) \\
    = hfAg(h-\id) - hgAf (h - \id) - (hfhg - hghf)A
  \end{multline*}
  which is satisfied since $Ad_C + d_CA = h - \id$ and
  $fg=gf$.

  Now $\Phi$ is a lower triangular matrix which has quasi-isomorphisms
  on its diagonal, and both the differentials~$d_\T$ and $\hat d_\T$
  are lower triangular as well. It follows from
  Lemma~\ref{lem:triangle_maps} that $\Phi$~is a
  quasi-isomorphism. --- If $C$ is a bounded above complex of
  projective $R$\nbd-modules, then both source and target of~$\Phi$
  are bounded above complexes of projective $\LL$-modules, and it
  is well known that a quasi-isomorphism of such complexes must be a
  homotopy equivalence.

  \eqref{item:mather} This is similar to the proof of
  part~\eqref{item:h}. We omit the details.

  \eqref{item:L} Abbreviate $\LL$ by~$L$. The map
  \[\gamma \colon C \tensor_R L \rTo C \ , \quad z \tensor p \mapsto
  zp\] induces a cochain complex map $\T(y,x;0) \rTo C$. To show that
  this is a quasi-isomorphism it is enough to verify that its mapping
  cone is acyclic; but this mapping cone is, up to isomorphism, the 
  total complex of the double complex
  \begin{equation*}
    C \tensor_R L \rTo^\alpha
    \begin{matrix}
      C \tensor\limits_R L \\ \oplus \\ C \tensor\limits_R^{\vphantom{e}}L
    \end{matrix}
    \rTo^\beta C \tensor_R L \rTo^\gamma C
  \end{equation*}
  with~$C$ sitting in column $p=1$, cf.~\eqref{eq:def_Tfg}, so that it
  is sufficient (using Lemma~\ref{lemma:dblcpx}) to demonstrate that
  this double complex has exact rows.

  Since $L$ is a free $R$-module, an element $z\in C\otimes_R L$ can
  be expressed uniquely as
  \[ z=\sum_{i,j \in \bZ} m_{i,j}\otimes x^iy^j\ ,\] with $m_{i,j} =
  0$ for almost all pairs~$(i,j)$. We say that {\it $z$~has
    $x$\nbd-amplitude in the interval~$[a,b]$\/} if $m_{i,j} =0$ for
  $i \notin [a,b]$, and that {\it $z$~has $y$\nbd-amplitude in the
    interval~$[a,b]$\/} if $m_{i,j} =0$ for $j \notin [a,b]$. The {\it
    support of~$z$\/} is the set of all pairs~$(i,j)$ with $m_{i,j}
  \neq 0$.

  \smallskip

  To show that $\alpha$ is injective we note that
  \[\alpha(z)=\begin{pmatrix}
    \displaystyle\sum_{i,j \in \bZ} \big(m_{i,j}x\otimes x^iy^j -
    m_{i,j}\otimes x^{i+1}y^j\big) \\
    \displaystyle\sum_{i,j \in \bZ} \big(-m_{i,j}y\otimes x^iy^j +
    m_{i,j}\otimes x^iy^{j+1}\big)
  \end{pmatrix} \ ,\] so $\alpha(z)=0$ implies in particular
  \begin{equation}
    \label{eq:is0}
    m_{i,j}x-m_{i-1,j}=0
  \end{equation}
  for all pairs $(i,j)$.  The support of $z$, if non-empty, is well
  ordered by the order $(i,j)<(k,l)$ whenever $j<l$, or $j=l$ and
  $i<k$. When $(i,j)$ is the element of the support which is minimal
  with respect to this order we have $m_{i,j} \neq 0$, by definition
  of support, and $m_{i-1,j} = 0$. Using~\eqref{eq:is0} we have
  $m_{i,j}x=0$, and since $x$~is a non zero divisor we arrive at the
  contradiction $m_{i,j}=0$. We conclude that $\alpha(z)=0$ forces $z
  = 0$.

  \smallskip

  To show that $\mathrm{im}\,\alpha = \ker\beta$ we will take an
  element~$(z_1,z_2)$ of $\ker\beta$ and show that it can be reduced
  to~$0$ by subtracting a sequence of elements of $\mathrm{im}\,\alpha
  \subseteq \ker\beta$. This clearly implies that $(z_1,z_2) \in
  \mathrm{im}\,\alpha$ as required.

  So let $(z_1,z_2) \in \ker\beta$, where
  \[z_1 = \sum_{i,j \in \bZ} m_{i,j}\otimes x^iy^j \qquad \text{and}
  \qquad z_2 = \sum_{i,j \in \bZ} n_{i,j}\otimes x^iy^j\ .\]
  Choose integers $a \leq b$ such that both $z_1$ and~$z_2$ have
  $x$\nbd-amplitude in~$[a,b]$.

  If $a<b$, we define
  \[u = \sum_{j \in \bZ} m_{b,j}\otimes x^{b-1} y^{j} \in C \tensor_R
  L\] and set $(z_1',z_2') = (z_1,z_2)+\alpha(u)$. Then $z_1'$ has
  $x$\nbd-amplitude in $[a,b-1]$ while the $x$-amplitude of $z_2'$ is
  in $[a,b]$. The element $(z_1',z_2')$ will be in
  $\mathrm{im}\,\alpha$ if and only if $(z_1,z_2) \in
  \mathrm{im}\,\alpha$.

  By iteration, we may thus assume that our initial pair $(z_1,z_2)$
  is such that the $x$\nbd-amplitude of~$z_1$ is in $\{a\} = [a,a]$,
  and $z_2$~has $x$\nbd-amplitude in $[a,b]$ for some $b \geq a$.
  Note that then $\beta\big((z_1,0)\big)$ will also have $x$-amplitude
  in $\{a\}$, and that $\beta\big((z_1,z_2)\big)$ will have
  $x$\nbd-amplitude in~$[a,b+1]$.

  Write $\beta\big((z_1,z_2)\big) = \sum_{i=a}^{b+1}\sum_{j \in \bZ}
  r_{i,j}\otimes x^{i}y^j$. Then $r_{i+1,j}=n_{i,j}$ for every $i \geq
  a$ as $m_{i+1,\ell} = 0$ for any~$\ell$.  Since
  $\beta\big((z_1,z_2)\big) = 0$ we must have $n_{i,j} = r_{i+1,j} =
  0$ for every $i \geq a$, so that in fact $z_2 = 0$. This in turn
  implies that $\beta\big((z_1,0)\big) = 0$ so that $r_{a,\ell+1} =
  m_{a,\ell} - m_{a,\ell+1}=0$. If $z_1$ is non-zero, we let $\ell$ be
  maximal with $m_{a, \ell} \neq 0$. Then $m_{a,\ell+1} = 0$ and
  consequently $0 = r_{a,\ell+1} = m_{a,\ell}$, a contradiction. We
  conclude that $(z_1,z_2) = 0 \in \mathrm{im}\,\alpha$ as required.

  \smallskip

  To show $\mathrm{im}\,\beta = \ker\gamma$ we proceed in a similar
  manner. For a given $z\in \mathrm{ker}\gamma$ may be converted, by
  subtracting elements of $\mathrm{im}\,\beta$, to an element of the
  form $m_{a,a} \tensor x^ay^a$ which lies in $\mathrm{im}\,\beta$ if
  and only if $z$~does. The conversion involves a systematic reduction
  of both the $x$\nbd-amplitude and the $y$\nbd-amplitude; we omit the
  details. Now $\gamma(z)=0$ implies $m_{a,a} x^ay^a = \gamma (m_{a,a}
  \tensor x^ay^a) = 0$, and since neither~$x$ nor~$y$ is a zero
  divisor this gives us $m_{a,a} \tensor x^a y^a = 0 \in \mathrm{im}\,
  \beta$.

  \smallskip

  Finally, let us remark that $\gamma$ is surjective since
  $\gamma(p\otimes1)=p$.
\end{proof}

\subsection*{\hskip 1.8em The mapping $2$-torus analogue}

For later use we record a construction somewhat similar to the mapping
$2$\nbd-torus. Suppose we have a cochain complex~$C$ of
$R$\nbd-modules with differential denoted~$d_C$, and two self maps
$f,g \colon C \rTo C$ which commute up to homotopy. Suppose moreover
we are given a specific choice of homotopy $H \colon fg \simeq gf$
such that $d_C H + H d_C = fg - gf$. To such data we associate a
cochain complex of $R$\nbd-modules, the {\it mapping $2$\nbd-torus
  analogue\/} $\A(f,g; H)$. The module in degree~$n$ is the direct sum
\begin{equation}
  \label{eq:def_AfgH}
  \A(f,g;H)^n = C^{n+2} \, \oplus\, \big(C^{n+1} \oplus\, C^{n+1}
  \big) \,\oplus\, C^n\ ,
\end{equation}
and the boundary map $d_\A \colon \A(f,g;H)^n \rTo \A(f,g;H)^{n+1}$ is
given by the matrix
\begin{equation}
  \label{eq:def_d_A}
  d_\A =
  \begin{pmatrix}
    d_C & 0 & 0 & 0 \\
    \noalign{\smallskip}
    -g & -d_C& 0 & 0 \\
    \noalign{\smallskip}
    f & 0 & -d_C & 0 \\
    \noalign{\smallskip}
    H & f & g &  d_C
  \end{pmatrix} \ .
\end{equation}

\begin{remark}
  \label{rem:A}
  \begin{enumerate}[{\rm (a)}]
  \item   If $fg=gf$ we may choose $H=0$, and the mapping $2$\nbd-torus analogue
    $\A(f,g;0)$ is nothing but the total complex of the twofold
    chain complex 
    \begin{equation}
      \label{eq:def_Tfg}
      C \rTo^\alpha C \oplus C \rTo^\beta C
    \end{equation}
    concentrated in columns $-2$, $-1$ and~$0$.
    Here the maps $\alpha$ and~$\beta$ are given by the matrices
    \[\alpha =
    \begin{pmatrix}
      -g \\ f 
    \end{pmatrix}
    \quad \text{and} \quad
    \beta =
    \begin{pmatrix}
      f \,, & g 
    \end{pmatrix}
    \ .\]
  \item \label{item:acyclicA} Suppose in addition that one of $f$
    or~$g$ is a quasi-isomorphism. Then $\A(f,g;0)$ is acyclic. For
    $\A(f,g;0)$ can be obtained by taking iterated mapping cones in
    a commuting square of cochain complexes:
    \begin{diagram}
      C & \rTo^f & C \\ \uTo<g && \uTo<g \\ C & \rTo^f & C
    \end{diagram}
    That is, $\A(f,g;0)$ is isomorphic to the cochain complex obtained
    by applying the algebraic mapping cone functor to the horizontal
    maps in the square first, and then again to the resulting
    ``vertical'' map (or, by symmetry, with roles of horizontal and
    vertical exchanged). But the mapping cone of a quasi-isomorphism
    is acyclic, as is the mapping cone of a map of acyclic complexes,
    whence the assertion.
  \end{enumerate}
\end{remark}

\begin{lemma}
  \label{lem:properties_A}
  \begin{enumerate}[{\rm (1)}]
  \item \label{item:hA} Suppose that the maps $f$and~$g$ commute, and
    suppose that we are given a self map $h \colon C \rTo C$ together
    with a homotopy $A \colon h \simeq \id$ so that $d_C A + A d_C = h
    - \id$. Then $h(fAg-gAf)$ is a homotopy from $(hf)(hg)$
    to~$(hg)(hf)$, and the matrix
    \[\qquad
    \begin{pmatrix}
      h & 0 & 0 & 0 \\
      -hgA & h & 0 & 0 \\
       hfA & 0 & h & 0 \\
      h(fAg-gAf)A & -hfA & -hgA & h
    \end{pmatrix}\] defines a quasi-isomorphism
    \[\A(f,g;0) \rTo \A\big( hf, hg; h(fAg-gAf)\big) \ .\] If $C$ is a
    bounded above complex of projective modules over~$R$ then
    $\A(f,g;0)$ and~$\A\big(hf,hg; h(fAg-gAf)\big)$ are homotopy
    equivalent.
  \item \label{item:matherA} Suppose that the maps $f$and~$g$ commute
    as before. Given cochain maps $\alpha \colon B \rTo C$ and $\beta
    \colon C \rTo B$ and a homotopy $A \colon \alpha\beta \simeq \id$
    so that $d_C A + A d_C = \alpha\beta - \id$, the map $\beta (fAg -
    gAf) \alpha$ is a homotopy from $(\beta f \alpha)(\beta g \alpha)$
    to $(\beta g \alpha)(\beta f \alpha)$, the map $\alpha\beta (fAg -
    gAf)$ is a homotopy from $(\alpha\beta f) (\alpha\beta g)$ to
    $(\alpha\beta g) (\alpha\beta f)$, and the diagonal matrix with
    entries $\alpha$ on the diagonal defines a cochain complex map
    \begin{multline*}
      \qquad \qquad \qquad \qquad \alpha_* \colon \A\big( \beta f
      \alpha, \beta g \alpha; \beta (fAg - gAf) \alpha \big) \\ \rTo
      \A\big( \alpha\beta f, \alpha\beta g; \alpha\beta (fAg - gAf)
      \big) \ .
    \end{multline*}
    If $\alpha$~is a quasi-isomorphism so is~$\alpha_*$. If in
    addition $B$ and~$C$ are bounded above complexes of projective
    $R$\nbd-modules then $\alpha_*$~is a homotopy equivalence.
  \end{enumerate}
\end{lemma}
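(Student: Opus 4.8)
Lemma~\ref{lem:properties_A} is the mapping-torus-analogue version of **Theorem~\ref{thm:properties}** parts~\eqref{item:h} and~\eqref{item:mather}, so the plan is to mirror that proof essentially verbatim, replacing $\T$ by $\A$ and deleting all occurrences of $\tensor_R \LL$ (equivalently, setting the "$x$" and "$y$" self-maps appearing in $d_\T$ to zero). Concretely, for part~\eqref{item:hA} I would first verify the homotopy claim: the computation
\[
  d_C\, h(fAg-gAf) + h(fAg-gAf)\, d_C = hf(d_C A + A d_C)g - hg(d_C A + A d_C)f = hf(h-\id)g - hg(h-\id)f = (hf)(hg) - (hg)(hf),
\]
using $fg=gf$, is identical to the one in the proof of~\eqref{item:h}. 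Then I would check that the displayed lower-triangular matrix is a cochain map $\A(f,g;0) \rTo \A(hf,hg;h(fAg-gAf))$; since $d_\A$ is obtained from $d_\T$ by discarding the "$\id\tensor x$" and "$\id\tensor y$" summands and dropping $\tensor\id$, the verification $\hat d_\A \Phi = \Phi d_\A$ reduces to exactly the same string of identities proved there, only shorter (no $x$- or $y$-terms to cancel), so I would simply say "the calculation is the same as in Theorem~\ref{thm:properties}\eqref{item:h}, with the terms involving $x$ and $y$ deleted".

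Having shown $\Phi$ is a cochain map, the quasi-isomorphism assertion follows from Lemma~\ref{lem:triangle_maps}: both $d_\A$ for $\A(f,g;0)$ and $d_\A$ for the target are lower $4$-triangular (by the shape of~\eqref{eq:def_d_A}), $\Phi$ is a map of lower triangular complexes, and its four diagonal entries are all the quasi-isomorphism~$h$ (indeed $h \simeq \id$), so each $f_{k,k}$ is a quasi-isomorphism and Lemma~\ref{lem:triangle_maps} applies. The final sentence—that over a bounded-above complex of projective $R$-modules $\Phi$ is a homotopy equivalence—follows because $\A(f,g;0)$ and $\A(hf,hg;\ldots)$ are then themselves bounded-above complexes of projective $R$-modules (each degree is a finite direct sum of modules $C^{n+i}$), and a quasi-isomorphism between bounded-above complexes of projectives is a homotopy equivalence. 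Part~\eqref{item:matherA} is handled the same way: the two homotopy claims are the same algebraic identities as in Theorem~\ref{thm:properties}\eqref{item:mather}, the diagonal matrix with entries $\alpha$ visibly intertwines the two $d_\A$'s (again by comparison with the $\T$-case after deleting $x$, $y$, and $\tensor\id$), and when $\alpha$ is a quasi-isomorphism Lemma~\ref{lem:triangle_maps} gives that $\alpha_*$ is one too, upgrading to a homotopy equivalence in the projective bounded-above case by the same argument.

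There is essentially no obstacle here: the content is that Lemma~\ref{lem:properties_A} is a formal de-tensored shadow of Theorem~\ref{thm:properties}, and the only thing to be careful about is bookkeeping—making sure that after deleting the $\id\tensor x$ and $\id\tensor y$ summands the matrices $d_\A$ still square to zero (which is exactly Remark~\ref{rem:A}\eqref{item:acyclicA}'s underlying fact, or a direct check using $fg=gf$ and $d_C H + H d_C = fg - gf$) and that $\Phi$, $\alpha_*$ remain lower triangular with the correct diagonal. Accordingly I would write the proof tersely: \emph{"This is proved exactly as Theorem~\ref{thm:properties} parts~\eqref{item:h} and~\eqref{item:mather}; one simply omits the self-maps $\id\tensor x$ and $\id\tensor y$ throughout and works with the complexes $\A(-,-;-)$ in place of $\T(-,-;-)$. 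We leave the details to the reader."} If a slightly fuller treatment were wanted, I would reproduce the homotopy computation displayed above and the single sentence invoking Lemma~\ref{lem:triangle_maps}, since those are the only two non-clerical points.
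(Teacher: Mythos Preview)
Your proposal is correct and matches the paper's own approach exactly: the paper's proof reads ``This is almost identical to the proof of Theorem~\ref{thm:properties}, but slightly easier due to the absence of tensor products. We omit the details.'' Your write-up actually supplies more of those details (the homotopy computation and the appeal to Lemma~\ref{lem:triangle_maps}) than the paper does, so nothing needs to change.
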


\begin{proof}
  This is almost identical to the proof of
  Theorem~\ref{thm:properties}, but slightly easier due to the absence
  of tensor products. We omit the details.
\end{proof}

\section{Mapping $2$-tori and totalisation. Applications}
\label{sec:((x,y))}

Now suppose that $C$ is an $R$\nbd-finitely dominated bounded above
cochain complex of projective $\LL$\nbd-modules. We will prove now
that $C \tensor_\LL R\nov{x,y}$~is acyclic.

By our hypothesis there exists a bounded cochain complex~$B$ of
finitely generated projective $R$\nbd-modules together with mutually
inverse homotopy equivalences
\[\alpha \colon B \rTo C \quad \text{and} \quad \beta \colon C \rTo
B\] of $R$\nbd-module cochain complexes. Choose a homotopy $A \colon
\alpha \beta \simeq \id_C$. By Theorem~\ref{thm:properties} there are
$\LL$\nbd-linear homotopy equivalences
\[
\begin{split}
  C \lTo \T(y,x;0) \rTo \T\big(\alpha\beta y, \alpha\beta x;
  \alpha\beta (yAx-xAy)\big) \qquad \qquad \qquad \\
  \lTo^{\alpha_*} \T\big(\beta y \alpha, \beta x \alpha; \beta
  (yAx-xAy) \alpha\big) =: Z
\end{split}\] so that $C \tensor_\LL R\nov{x,y} \simeq Z \tensor_\LL
R\nov{x,y}$.

Now note that we have isomorphisms
\[\begin{split}B \tensor_R \LL \tensor_\LL R\nov{x,y} \qquad
  \qquad \qquad \\ \iso B \tensor_R R\nov{x,y} \iso B\nov{x,y} \
  ,\end{split}\] the second one being due to the fact that
$B$~consists of finitely generated projective $R$\nbd-modules, so that
Lemma~\ref{lem:xyseries} applies. We can thus identify the $n$th
cochain module of $Z \tensor_\LL R\nov{x,y}$ with
\begin{equation}
  \label{eq:identification}
  B^{n+2} \nov{x,y} \oplus \big( B^{n+1} \nov{x,y} \oplus B^{n+1}
  \nov{x,y}\, \big) \oplus B^n \nov{x,y} \ ,
\end{equation}
and the differential with the matrix
\begin{equation*}
  \begin{pmatrix}
    d_B \nov{x,y} & 0 & 0 & 0 \\ \noalign{\smallskip}
    - \big( \beta x \alpha \nov{x,y} - x \big) & -d_B\nov{x,y} & 0 & 0 \\ \noalign{\smallskip}
    \beta y \alpha \nov{x,y} - y & 0 & -d_B\nov{x,y} & 0 \\
    \noalign{\smallskip}
    \beta(yAx-xAy)\alpha \nov{x,y} & \beta y \alpha \nov{x,y} -
    y & \beta x \alpha \nov{x,y} - x &  d_B\nov{x,y}
  \end{pmatrix} \ .
\end{equation*}
(The symbol $d_B$ denotes the differential of~$B$. Note that the
letter ``$x$'' in the term $\beta x \alpha$ denotes a self map of~$C$,
while the symbol $x$ by itself denotes a self map of $B\nov{x,y}$;
similarly with $y$ in place of~$x$.)

This cochain complex arises as the realisation of a triple
complex. Indeed, for $x,y,z \in \bZ$ let
\[T^{x,y,z} = B^{x+y+z+2} \oplus B^{x+y+z+1} \oplus B^{x+y+z+1} \oplus
B^{x+y+z} \,\] and define differentials
\begin{align*}
  d_x & \colon T^{x,y,z} \rTo T^{x+1,y,z} \ , & (r,\,s,\,t,\,u)
  & \mapsto (0,\, r,\, 0,\, -t) \\
  d_y & \colon T^{x,y,z} \rTo T^{x,y+1,z} \ , & (r,\,s,\,t,\,u)
  & \mapsto (0,\, 0,\, -r,\, -s \big)
\end{align*}
as well as
\[d_z =
\begin{pmatrix}
  d_B & 0 & 0 & 0 \\ \noalign{\smallskip}
  -\beta x \alpha & -d_B & 0 & 0 \\ \noalign{\smallskip}
   \beta y \alpha & 0 & -d_B & 0 \\
  \noalign{\smallskip}
  H & \beta y \alpha & \beta x \alpha &  d_B
\end{pmatrix}
\colon T^{x,y,z} \rTo T^{x,y,z+1}\] where $H = \beta
(yAx-xAy)\alpha$. That is, $T^{x,y,*}$ is $\A (\beta
y \alpha, \beta x \alpha, H)$ shifted down $x+y$ times. --- With these
definitions $\totblt (T^{*,*,*})$ is precisely the chain complex
\[Z \tensor_\LL R\nov{x, y}\] under the identification made
above~\eqref{eq:identification}. Now the complex $\A(\beta y \alpha,
\beta x \alpha; H)$ is acyclic by Lemma~\ref{lem:properties_A} and
Remark~\ref{rem:A}~\eqref{item:acyclicA}; in more detail, we have a
chain of homotopy equivalences
\[\A (\beta y \alpha, \beta x \alpha; H) \rTo^\simeq \A
\big(\alpha\beta y, \alpha\beta x; \alpha\beta (yAx - xAy) \big)
\lTo^\simeq \A (y,x;0)\] with the cochain complex on the right being
acyclic. We conclude from Proposition~\ref{prop:triple_exact} that $Z
\tensor_\LL R\nov{x, y}$ is acyclic.

\begin{corollary}
  \label{cor:XYacyclic}
  If $C$ is $R$\nbd-finitely dominated, all four of the chain
  complexes listed in~\eqref{eq:cond_vertex} of Theorem~\ref{thm:main}
  are acyclic.
\end{corollary}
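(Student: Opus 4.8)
The plan is to deduce Corollary~\ref{cor:XYacyclic} from the acyclicity result established in the preceding discussion, namely that $C \tensor_\LL R\nov{x,y}$ is acyclic whenever $C$ is an $R$\nbd-finitely dominated bounded above cochain complex of projective $\LL$\nbd-modules, together with a change-of-variables argument analogous to the one used in Corollary~\ref{cor:x_or_y_acyclic}. First I would observe that the hypothesis of the Main Theorem gives a bounded cochain complex of finitely generated \emph{free} $L$\nbd-modules, which is in particular a bounded above complex of projective $L$\nbd-modules; moreover it is $R$\nbd-finitely dominated by assumption, so the argument of \S\ref{sec:((x,y))} applies verbatim and shows that $C \tensor_\LL R\nov{x,y}$ is acyclic. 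This disposes of the first of the four complexes in~\eqref{eq:cond_vertex}.

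Next I would treat the remaining three cases by a substitution of indeterminates. The ring automorphism of $L = \LL$ sending $x \mapsto x\inv$ and fixing~$y$ carries $C$ to a complex $C'$ which is still a bounded complex of finitely generated free $L$\nbd-modules and is still $R$\nbd-finitely dominated (the $R$\nbd-module structure is untouched by the substitution, since $R \subseteq L$ is fixed). Applying the already-proven case to~$C'$ and then translating back yields acyclicity of $C \tensor_\LL R\nov{x\inv, y}$. The substitution $y \mapsto y\inv$ (with $x$ fixed) gives $C \tensor_\LL R\nov{x, y\inv}$, and performing both substitutions simultaneously gives $C \tensor_\LL R\nov{x\inv, y\inv}$. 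Thus all four complexes listed in~\eqref{eq:cond_vertex} are acyclic.

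I do not expect any genuine obstacle here: the substantive work — building the mapping $2$\nbd-torus, identifying $Z \tensor_\LL R\nov{x,y}$ with the below-and-left truncated totalisation of a triple complex whose $z$\nbd-columns are acyclic, and invoking Proposition~\ref{prop:triple_exact} — has already been carried out in \S\ref{sec:((x,y))}. The only point requiring a word of care is that the change of variables must genuinely preserve $R$\nbd-finite domination; this is immediate because the substituting automorphism restricts to the identity on $R$, so an $R$\nbd-module homotopy equivalence $C \simeq B$ with $B$ a bounded complex of finitely generated projective $R$\nbd-modules transports, under the automorphism, to an $R$\nbd-module homotopy equivalence $C' \simeq B$ of the same kind. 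One could alternatively avoid the change of variables entirely by introducing ``above'' and ``right'' truncated variants of the totalisation functor of \S\ref{sec:blt_tot} and correspondingly modified mapping $2$\nbd-tori, exactly as remarked after Corollary~\ref{cor:x_or_y_acyclic}; but the substitution argument is shorter and suffices for the present purpose.
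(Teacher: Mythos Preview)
Your proposal is correct and follows essentially the same approach as the paper: the paper's proof simply says that the case $C \tensor_L R\nov{x,y}$ has just been shown, and that the other cases follow by a ``change of coordinates'' replacing $x$ by~$x\inv$ and/or $y$ by~$y\inv$. Your write-up merely makes explicit the (straightforward) verification that such substitutions preserve $R$\nbd-finite domination, and notes the alternative via modified truncated totalisations---both of which the paper leaves implicit.
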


\begin{proof}
  For $C \tensor_R R\nov{x,y}$ this has just been shown. The other cases
  follow by a ``change of coordinates'' replacing $x$ by~$x\inv$
  and/or $y$ by~$y\inv$.
\end{proof}

\goodbreak

\part{Acyclicity implies finite domination}
\label{part:2}

We begin by giving a quick proof of the second implication. Suppose
that the chain complexes listed in~\eqref{eq:cond_edge}
and~\eqref{eq:cond_vertex} are acyclic. We will use
Theorem~\ref{thm:schuetz}, applied to the case $G = \bZ^2$ and $N =
0$, to conclude that $C$~is $R$\nbd-finitely dominated. A non-trivial
character $\chi \colon G \rTo \bR$ can be identified with a non-zero
vector $\chi = (a,b) \in \bR^2$, using the standard inner product
of~$\bR^2$. If $\chi = (a,0)$, for $a > 0$, then $\widehat{RG_\chi} =
R[y,y\inv]\nov{x}$ so that acyclicity in~\eqref{eq:cond_edge} implies
acyclicity of $C \tensor_{R[G]} \widehat{RG_\chi}$. The situation is
similar for $a<0$ with $\widehat{RG_\chi} = R[y,y\inv]\nov{x\inv}$, or
for $\chi = (0,b)$ with $b \neq 0$ in which case $\widehat{RG_\chi} =
R[x,x\inv]\nov{y^{\text{sgn}\, b}}$.

For $\chi = (a,b)$ with $a,b \neq 0$, acyclicity
in~\eqref{eq:cond_vertex} implies acyclicity of $C \tensor_{R[G]}
\widehat{RG_\chi}$. For example, if $a,b > 0$ then $\widehat{RG_\chi}$
contains $R\nov{x,y}$ so that
\[C \tensor_{R[G]} \widehat{RG_\chi} \iso C \tensor_{R[G]} R\nov{x,y}
\tensor_{R\nov{x,y}} \widehat{RG_\chi} \simeq 0\] as the chain complexes
in~\eqref{eq:cond_vertex} are actually contractible. In general, for
$a,b \neq 0$ we have $\widehat{RG_\chi} \supset R\nov{x^{\text{sgn}\,
    a}, y^{\text{sgn}\, b}}$, and the argument applies {\it mutatis
  mutandis}.

\medbreak

We will nevertheless give a different and new proof of the implication
in the remainder of the paper. We eschew the use of controlled algebra
and inductive arguments in favour of standard homological algebra
combined with homotopy-theoretic arguments. While not short our proof
is conceptually simple, and lends itself to generalisations for
\textsc{Laurent} rings with many indeterminates. These generalisations
can be done at the expense of introducing more elaborate
combinatorics; this topic will be taken up in a separate paper.

\section{Diagrams indexed by the face lattice of a square}
\label{sec:diagrams}

Let $S$ denote the square $[-1,1]^2$, a convex polytope in $\bR^2$. In
what follows a face of~$S$ will always assumed to be non-empty, unless
specified otherwise. We will orient the edges of~$S$, and~$S$ itself,
counter-clockwise as indicated in the following picture which also
shows our labelling for the faces of~$S$:
\begin{center}
\begin{tikzpicture}
  \draw[->] (0,0) node {\(\bullet\)} node[below left] {\(v_{bl}\)} --  (1,0) node[below] {\(e_b\)}; \draw (1,0) -- (2,0);
  \draw[->] (2,0) node {\(\bullet\)} node[below right] {\(v_{br}\)}  --  (2,1) node[right] {\(e_r\)}; \draw (2,1) -- (2,2);
  \draw[->] (2,2) node {\(\bullet\)} node[above right] {\(v_{tr}\)}  --  (1,2) node[above] {\(e_t\)}; \draw (1,2) -- (0,2);
  \draw[->] (0,2) node {\(\bullet\)} node[above left] {\(v_{tl}\)}  --  (0,1) node[left] {\(e_l\)}; \draw (0,1) -- (0,0);
  \draw[decoration={markings, mark=at position 0.25 with \arrow{>},
  mark=at position 0.75 with {\arrow{>}}}, postaction={decorate}] (1,1) circle (0.2);
\end{tikzpicture}
\end{center}
The orientation gives us a choice of incidence numbers $[F:G] \in
\{-1,\, 0,\, 1\}$ for all faces $F$ and~$G$ of~$S$. For example,
$[e:S] = 1$ for every edge~$e$, while
\[[v_{bl}:e_b] = -1 \qquad \text{and} \qquad [v_{br}:e_b] = 1\]
for the bottom edge~$e_b$.

For each face $F$ we define the ring $A_F$ to be the monoid
$R$\nbd-algebra
\[A_F=R[T_F \cap \bZ^2]\] where $T_F$ is the tangent cone $T_F =
\mathrm{span}_{\geq 0}\{s-f \mid s \in S,\,f\in F\}$ of~$S$ at~$F$.
For example, the ring $A_S$ is the \textsc{Laurent} polynomial ring in
two indeterminates $R[x,\,x\inv,\,y,\,y\inv]$, while $A_{v_{br}} =
R[x\inv,\, y]$ is a polynomial ring. In general we have $A_F \subseteq
A_G$ if $F \subseteq G$.

For a face $F$ of~$S$ we let $v_F \in \bZ^2$ denote its barycentre, or
centre of mass. For a pair of faces $F\subset G$, the difference
$v_G-v_F$ represents a monomial~$m_{FG}$ in the centre of~$A_F$; for
example, $m_{v_{bl}e_b} = x$, $m_{v_{tl}S} = xy\inv$ and $m_{e_rS} =
x\inv$. The algebra $A_G$~is the localisation of~$A_F$ by~$m_{FG}$:
\begin{equation}
  \label{eq:localisation}
  \parbox{0.8 \textwidth}{\it
    For every element \(u \in A_G\) there is \(k_0 \geq 0\) such
    that\break \(m_{FG}^k u \in A_F\) for all \(k \geq k_0\).}
\end{equation}
We also choose $m_{\emptyset F}$ to be the monomial represented by the
lattice point $v_F$, for every face~$F$ of~$S$, so that $m_{\emptyset
  e_l} = x\inv$ and $m_{\emptyset v_{tl}} = x\inv y$. Clearly,
\begin{equation}
  \label{eq:monoids_compatible}
  \parbox{0.8 \textwidth}{\it \(m_{FH} = m_{FG} \cdot m_{GH}\) for all
    triples of (possibly empty) faces \(F \subseteq G \subseteq H\) of
    \(S\).}
\end{equation}

When $F$ is a codimension\nbd-$1$ face of~$G$, the monomial $m_{FG}$
is the indeterminate $z\in\{x,\, x\inv,\, y,\, y\inv\}$ such that
$\{z,\,z\inv\}\subset A_G$ and $z\inv\notin A_F$.

We denote by $\fp$ the join semi-lattice of non-empty faces of~$S$
ordered by inclusion, and by $\Chd{R}$ the category of cochain
complexes of right $R$-modules.

\begin{definition}\label{def:presheaf}
  \begin{enumerate}
  \item An \emph{$\fp$-diagram~$X$ of cochain complexes\/} is a
    functor $X \colon {\fp} \rTo \Chd{R}, \ F \mapsto X_F$ equipped
    with extra data giving each $X_F$ the structure of a cochain
    complex of (right) $A_F$-modules, such that for each pair of
    non-empty faces $F \subseteq G$ of~$S$ the corresponding structure
    map $s_{FG}\colon X_F \rTo X_G$ is in fact $A_F$\nbd-linear.
  \item The $\fp$\nbd-diagram~$X$ is called \emph{bounded} if the
    cochain complex $X_F$ is bounded for every face~$F$ of~$S$.
  \item An $\fp$\nbd-diagram~$X$ is said to be {\em quasi-coherent} if
    for each pair of non-empty faces $F\subset G$ of~$S$ the adjoint
    map \[X_F\otimes_{A_F}A_G \rTo X_G\] of~$s_{FG}$ is an isomorphism
    of $A_G$-module cochain complexes.
  \end{enumerate}
\end{definition}

As a matter of notation, given a functor $X \colon {\fp} \rTo \Chd{R}$
we denote the differential of the cochain complex~$X_F$ by~$d_F$.

An example of a quasi-coherent $\fp$\nbd-diagram (concentrated in a
single cochain degree) is the diagram ${D}(k)$ shown in
Fig.~\ref{fig:dk}. We have $D(k)_F=A_F$, and the structure maps
$s_{FG}$ are the inclusions followed by multiplication with the
monomial~$m_{FG}^{-k}$.

\begin{figure}
\begin{diagram}[PS,w=0.18\textwidth,h=0.08\textwidth,tight,midshaft]
 R[x,y\inv]   & \rTo^{x^{-k}} & R[x,x\inv,y\inv]   & \lTo^{x^k} &  R[x\inv,y\inv]       \\
 \dTo^{y^k}    &               &      \dTo^{y^k}      &            &       \dTo^{y^k}        \\
R[x,y,y\inv]  & \rTo^{x^{-k}}  & R[x,x\inv,y,y\inv] & \lTo^{x^k} &  R[x\inv,y,y\inv]     \\
  \uTo^{y^{-k}}&               &      \uTo^{y^{-k}}   &            &       \uTo^{y^{-k}}     \\
   R[x,y]      & \rTo^{x^{-k}}  & R[x,x\inv,y]        & \lTo^{x^k} &  R[x\inv,y] 
\end{diagram}
\caption{The diagram ${D}(k)$}
\label{fig:dk}
\end{figure}

\begin{construction}\label{con:extend}
  Let $C$ be a bounded cochain complex consisting of finitely
  generated free $R[x,\,x\inv,\,y,\,y\inv]$-modules, with a
  specified finite basis $B_a \subset C^a$ so that we have an
  identification $C_a = \bigoplus_{B_a} R[x,\,x\inv,\,y,\,y\inv]$.

  We will show how to construct a sequence $k_i$ of non-negative
  integers and a bounded $\fp$\nbd-diagram~$Y$ with $Y_S = C$ such
  that $Y^j \iso \bigoplus_{B_j} D(k_j)$ for every $j \in \bZ$.

  \medbreak

  We start by making the default convention that all numbers~$k_j$
  which are not explicitly defined are taken to be~$0$, and similarly
  that all diagrams~$Y^j$ and all boundary maps that are not
  explicitly defined are taken to be trivial.

  \medbreak

  If $C$~is the $0$\nbd-complex we can choose the trivial diagram with
  all values~$0$. Otherwise, $C^i$ is trivial for $i<a$, say, and $C^a
  \neq 0$. For a face~$F$ of~$S$ we define $Y^a_F = A_F[B_a]$ to be
  the free $A_F$\nbd-module on the basis~$B_a$, and for a pair of
  faces $F \subseteq G$ we define the structure map $s^a_{FG} \colon
  Y^a_F \rTo Y^a_G$ as the map induced by the inclusion $A_F \subseteq
  A_G$ and the identity on basis elements. In other words, we define
  $Y^a = \bigoplus_{B_a} D(0)$. We set $k_a = 0$.

  If $C^a$ is the only non-trivial module in~$C$ the construction
  process terminates. Otherwise, we define $Y^{a+1}$ by setting
  $Y^{a+1}_F = A_F[B_{a+1}]$, the free $A_F$\nbd-mod\-ule with basis
  $B_{a+1}$. The structure maps will be of the form ``inclusion
  followed by multiplication with the monomial $m_{FG}^{k_{a+1}}$'',
  for some suitable~$k_{a+1}$ to be determined presently. In other
  words, we will have $Y^{a+1} = \bigoplus_{B_{a+1}} D(k_{a+1})$.

  To find a suitable value of~$k_{a+1}$ recall that we also need to
  construct differentials $d^{a+1}_F\colon Y_F^a\to Y_F^{a+1}$, for
  all the faces $F$ of~$S$, which are compatible with the structure
  maps. We choose $d^a_S = d^a_C$ to be the given differential of the
  cochain complex~$C$. For a proper face~$F$ and fixed basis element
  $b \in B_a$ the image~$w_{F,b}$ of~$b$ under the composition
  \[\bigoplus_{B_a} A_F = Y_F^a \rTo[l>=3em]^{s^a_{FS}}
  Y_S^a = C^a \rTo[l>=3em]^{d_C^a} C^{a+1} = Y_S^{a+1} =
  \bigoplus_{B_{a+1}} A_S\] is such that $m_{FS}^k w_{F,b} \in
  \bigoplus_{B_{a+1}} A_F = Y_F^{a+1}$ for large~$k$,
  by~\eqref{eq:localisation}. We choose a number $k_{a+1}= k$ large
  enough to work for all $b \in B_a$, and all proper faces~$F$ of~$S$.

  Now we can define our $A_F$\nbd-linear differential $d_F^a$ uniquely
  by the requirement that it sends $b \in B_a$ to
  $m_{FS}^{k_{a+1}}w_{F,b} \in Y_F^{a+1}$ for a proper face~$F$
  of~$C$.

  We claim that this yields a map of $\fp$\nbd-diagrams $d^{a-1}
  \colon Y^{a-1} \rTo Y^a$. We need to verify $s^{a+1}_{FG} \circ
  d^a_F = d^a_G \circ s^a_{FG}$ for faces $F \subset G$. In fact, this
  equality holds for $G = S$ by construction: the basis element $b \in
  B_a$ is sent to $w_{F,b}$ by the composition on the right, while on
  the left we have $s^{a+1}_{FS} \circ d^a_F (b) = s^{a+1}_{FS}
  (m_{FS}^{k_{a+1}} w_{F,b}) = m_{FS}^{-k_{a+1}} m_{FS}^{k_{a+1}}
  w_{F,b} = w_{F,b}$ as required. If $G$~is a proper face of~$S$ we
  note that, by construction and the argument just given, we have
  \[s_{GS}^{a+1} \circ s^{a+1}_{FG} \circ d^a_F = s_{FS}^{a+1} \circ
  d^a_F = d_S^a \circ s_{FS}^a = d_S^a \circ s_{GS}^a \circ s_{FG}^a =
  s_{GS}^{a+1} \circ d^a_G \circ s^a_{FG} \ ;\] since
  $s_{GS}^{a+1}$~is injective, the claim follows.

  If there are no further non-trivial entries in~$C$ the construction
  terminates. Otherwise, we extend from $Y^j$ to~$Y^{j+1}$ by
  repeating the process above: choose a sufficiently large integer
  $k_{j+1} \geq 0$ so that, for each proper face~$F$ of~$S$ and each
  basis element $b \in B_j$, we have $m_{FS}^{k_{j+1}} w_{F,b} \in
  \bigoplus_{B_{j+1}} A_F = Y_F^{j+1}$, where $w_{F,b}$ is the image
  of~$b$ under the composition
  \[\bigoplus_{B_j} A_F = Y_F^j \rTo[l>=3em]^{s_{FS}^j}
  Y_S^j = C^j \rTo[l>=3em]^{d_C^j} C^{j+1} = Y_S^{j+1} =
  \bigoplus_{B_{j+1}} A_S \ .\] We let $Y^{j+1} = \bigoplus_{B_{j+1}}
  D(k_{j+1})$, and define differentials $d_F^j$ by the requirement
  that they send the basis element $b \in B_j$ to $m_{FS}^{k_{j+1}}
    w_{F,b}$.

  \medbreak

  Since $C$ is bounded this process terminates, and results in a
  quasi-coherent $\fp$\nbd-diagram~$Y$ with $Y_S = C$. It might be
  worth pointing out that the composition of two differentials is the
  zero map as required. Indeed, for every~$i$ the structure map
  $s_{FS}^{i+2}$~is injective, and we have
  \[s_{FS}^{i+2} \circ d_F^{i+1} \circ d_F^i = d_C^{i+1} \circ d_C^i
  \circ s_{FS}^i = 0\] (since $d_C$~is a a differential) so that
  $d_F^{i+1} \circ d_F^i = 0$.
\end{construction}

\section{Double complexes from diagrams. \v Cech complexes}
\label{sec:cech-complexes}

Let $P$ be a poset. We suppose that $P$ is equipped with a strictly
increasing {\it degree function\/} $\deg \colon P \rTo \bN$, and an
{\it incidence function\/}
\[[\nix : \nix] \colon P \times P \rTo \bZ\]
satisfying the following properties:
\begin{itemize}
\item[(DI1)] $[x:y] = 0$ unless $x<y$ and $\deg(y) = 1+\deg(x)$;
\item[(DI2)] for all $x < z$ with $\deg(z) = 2+\deg(x)$, the open
  interval $I(x:z) = \{y \in P \,|\, x < y < z\}$ is finite, and we
  have
  \[\sum_{y \in I(x:z)} [x:y] \cdot [y:z] = 0 \ ;\]
\item[(DI3)] for $z \in P$ with $\deg(z) = 1$ the set $I(<z) = \{y \in P
  \,|\, y < z\}$ is finite, and we have
  \[\sum_{y \in I(<z)} [y:z] = 0 \ .\]
\end{itemize}

Now let $X \colon P \rTo R\text{-}\mathrm{Mod},\ x \mapsto X_x$ be a
diagram of $R$\nbd-modules. We define the {\it \textsc{\v Cech}
  complex of~$X$\/}, denoted $\vgamma(X)$, to be the following cochain
complex with
\[\big( \vgamma(X) \big)^n = \bigoplus_{\substack{x \in P \\ \deg(x) =
    n}} X_x\] and differential induced by the structure maps of~$X$
modified by incidence numbers~$[x:y]$. It follows from~(DI2) that this
defines indeed a cochain complex.

\medbreak

More generally, for a $P$\nbd-indexed diagram~$X$ of cochain complexes
of $R$\nbd-modules we define a double complex~$D^{\ast,\ast}$, and
define the {\it \textsc{\v Cech} complex of~$X$\/}, denoted
$\vgamma(X)$, to be the totalisation:
\[\vgamma(X) = \totds D^{\ast,\ast}\]
The double complex~$D^{\ast,\ast}$ is defined by saying that
\begin{itemize}
\item the $q$th row~$D^{\ast,q}$ is the \textsc{\v Cech} complex
  $\vgamma(X^q)$ of~$X^q$, the $P$\nbd-indexed diagram of
  $R$\nbd-modules consisting of all the cochain level~$q$ terms of
  entries in~$X$, and
\item the vertical differential in column~$p$ is induced by the
  differentials of the cochain complexes $X_x$ with $\deg(x) = p$,
  modified by the sign~$(-1)^p$. 
\end{itemize}
By construction, vertical and horizontal differentials anti-commute.

\medbreak

We will apply this construction in specific cases only. For example,
for $P = \fp$ we can choose $\deg$ to be the dimension function, and
use the incidence numbers introduced at the beginning
of~\S\ref{sec:diagrams}. This data satisfies the conditions (DI1--3)
above. In particular, every $\fp$\nbd-diagram~$X$ in the sense of
definition~\ref{def:presheaf} gives rise to a double complex
$D^{\ast,\ast}$ of $R$\nbd-modules, and a cochain complex $\vgamma(X)
= \totds D^{\ast,\ast}$ of $R$\nbd-modules. Explicitly, we have
\begin{equation}
  \label{eq:dblgammay}
  D^{i,j} := \bigoplus_{\dim\,F=i} X^j_F
\end{equation}
with horizontal and vertical differentials induced by
\[d_h=[F:G]s_{FG}\ , \quad d_v=(-1)^i \cdot \bigoplus_{\dim\,F=i}d_F\]
where $[F:G]$ are our chosen incidence numbers, and
\[\vgamma(X)^n=\big(\totds D^{\ast,\ast}\big)^n = \bigoplus_{i+j=n}
D^{i,j}\] equipped with differential $d=d_h+d_v$.

\section{\v{C}ech cohomology. Computations}

Let $C$ and~$Y$ be as in Construction~\ref{con:extend}, and let
$D^{\ast,\ast}$ be the double complex (\ref{eq:dblgammay}) used to
define $\vgamma(Y)$. In particular the module $C^t = D^{2,t}$ is a
free $R[x,\,x\inv,\,y,\,y\inv]$-module with basis~$B_t$. By
construction, $D^{\ast,t}$ is the \textsc{\v{C}ech} complex of a
$B_t$-indexed direct sum of diagrams $D(k_t)$, where $k_t \geq 0$
depends on~$t$ according to Construction~\ref{con:extend}. (As before
we use the symbol $D(k)$ to denote the diagram of
Fig.~\ref{fig:dk}. We will never refer to an object~`$D$' alone, so
our notation should not lead to any confusion.) To compute the
cohomology of $D^{\ast,t}$ it is thus enough to compute the cohomology
of $\vgamma\big(D(k)\big)$ for $k\geq0$.

Recall that $S=[-1,1]^2$. We denote by $kS$ the {\em $k$-th dilate of
  $S$} which is $[-k,k]^2$. By $R[kS\cap \bZ^2]$ we mean the free
$R$-module with basis $kS \cap \bZ^2 = \{(i,j)\in \bZ^2 \mid -k\leq
i,j \leq k\}$, the set of lattice points in~$kS$.

\medbreak

We now want to consider the following cochain complex of
$R$\nbd-modules, an augmented version of the complex $\vgamma(Y)$:

\begin{equation}
  \label{eq:complex}
  \begin{split}
    0\rTo R[kS\cap\bZ^2]\rTo^{d\inv} \hskip 0.45 \textwidth \\ \qquad
    \vgamma\big(D(k)\big)^0 \rTo^{d^{0}} \vgamma\big(D(k)\big)^1
    \rTo^{d^{1}} \vgamma\big(D(k)\big)^2 \rTo 0
  \end{split}
\end{equation}
The map $d\inv$ is given by multiplication with the monomials
$m_{\emptyset v}^{-k}$ for the various vertices~$v$ of~$S$, followed
by an inclusion map. More explicitly, bar the zero terms this is the
complex
\begin{align*}
  R[kS\cap \bZ^2] \rTo^{d\inv}
  \begin{matrix}
    R[x,y] \\ \oplus \\ R[x\inv,y] \\ \oplus \\  R[x,y\inv]  \\ \oplus \\ R[x\inv,y\inv] 
  \end{matrix}
  \rTo^{d^0}
  \begin{matrix}
    R[x,x\inv,y] \\ \oplus \\ R[x,y,y\inv] \\ \oplus \\  R[x\inv,y,y\inv]  \\ \oplus \\ R[x,x\inv,y\inv] 
  \end{matrix}
  \rTo^{d^1} R[x,\, x\inv,\, y,\, y\inv]
\end{align*}
where $d\inv$, $d^0$ and $d^1$ are given by matrices
\begin{multline*}
  d\inv=
  \begin{pmatrix}
    x^{k}y^{k} \\
    x^{-k}y^{k} \\
    x^{k}y^{-k} \\
    x^{-k}y^{-k} 
  \end{pmatrix}
  \ , \quad
  d^0=
  \begin{pmatrix}
    -x^{-k} &   x^k  &   0    &   0   \\
    y^{-k} &    0   &  -y^k   &   0   \\
    0    & -y^{-k} &   0    &  y^k   \\
    0    &    0   & x^{-k} &  -x^k  
  \end{pmatrix}
  \\ \noalign{\smallskip} \text{and }
  d^1=\big( y^{-k}, x^{-k}, x^k, y^k \big) \ ;
\end{multline*}
the entries of these matrices are all of the form $[F:G] \cdot
m_{FG}^{-k}$, for (possibly empty) faces $F \subset G$.

\begin{lemma}
  \label{lem:horzhom}
  The complex~{\rm \eqref{eq:complex}} is exact for $k \geq 0$.
\end{lemma}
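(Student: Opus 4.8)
The plan is to put a $\bZ^2$\nbd-grading on the complex~\eqref{eq:complex} fine enough to split it into trivial pieces. The naive monomial grading fails because the structure maps of the diagram~$D(k)$ are multiplications by the non-constant monomials~$m_{FG}^{-k}$; but a \emph{shifted} monomial grading works. For a face~$F$ of~$S = [-1,1]^2$ let $v_F \in \bZ^2$ be its barycentre, which is integral by the choice of square. I would declare the monomial $x^a y^b$ occurring in the summand~$A_F$ of $\vgamma(D(k))^{\dim F}$ to have degree $(a,b) + k v_F \in \bZ^2$, and the basis vector of $R[kS \cap \bZ^2]$ labelled by the lattice point~$(i,j)$ to have degree~$(i,j)$. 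Since $m_{FG}$ represents the vector $v_G - v_F$ and $m_{\emptyset v}$ represents~$v_v$, every structure map and the augmentation map~$d^{-1}$ become homogeneous of degree~$0$. Thus \eqref{eq:complex} is a direct sum, indexed by $(p,q) \in \bZ^2$, of finite subcomplexes, and it is enough to show each of these is exact.

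Fix $(p,q)$. In its graded summand the module~$A_F$ contributes a copy of~$R$ in cohomological degree $\dim F$ exactly when $(p,q) - k v_F \in T_F$, and nothing otherwise; the degree~$-1$ term contributes~$R$ exactly when $(p,q) \in kS$; and every differential is an incidence number $[F:G] \in \{-1,0,1\}$ times~$\id_R$. Whether a face contributes depends only on the position of~$(p,q)$ relative to the lines $p = \pm k$, $q = \pm k$: the square always contributes, the edge~$e_b$ contributes iff $q \geq -k$, the vertex~$v_{bl}$ contributes iff $p \geq -k$ and $q \geq -k$, and symmetrically for the remaining edges and vertices, while the augmentation term contributes iff $-k \leq p,q \leq k$. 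There are nine cases according to the sign of $p \mp k$ and of $q \mp k$; the dihedral symmetry group of the square permutes the faces and flips the signs of incidence numbers (not affecting exactness), so it suffices to treat three: $(p,q)$ interior to~$kS$; $(p,q)$ in an edge strip, say $p > k$ with $|q| \leq k$; and $(p,q)$ in a corner quadrant, say $p > k$ with $q > k$.

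In these three cases the graded summand is, respectively, the complex
\[ 0 \rTo R \rTo R^4 \rTo R^4 \rTo R \rTo 0 \ , \qquad 0 \rTo R^2 \rTo R^3 \rTo R \rTo 0 \ , \qquad 0 \rTo R \rTo R^2 \rTo R \rTo 0 \]
(up to signs and reordering of summands). The first is the augmented cellular cochain complex of~$S$ viewed as a $2$\nbd-disc, hence acyclic; the other two are the relative cellular cochain complexes of the pair $(S,A)$ with $A$ the closed union of the non-contributing boundary faces --- an arc in~$\partial S$ --- so that $S/A$ is contractible and the complexes are again acyclic. Each may also be dispatched by a short linear algebra computation with the incidence numbers fixed in~\S\ref{sec:diagrams}. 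The main obstacle is nothing deep: it is the bookkeeping of incidence signs and the verification of the ``contributing face'' list in all nine positions of~$(p,q)$; once the shifted grading is in place the rest is a finite, mechanical check.
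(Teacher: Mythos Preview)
Your argument is correct and takes a genuinely different route from the paper's.  The paper proves Lemma~\ref{lem:horzhom} by four separate explicit diagram chases: it exhibits a preimage under~$d^1$ by splitting into positive and negative powers of~$y$; it shows $\ker d^1 = \mathrm{im}\,d^0$ by a case analysis on the position of the target monomial~$x^iy^j$ relative to the bands $|i|\leq k$, $|j|\leq k$ and writes down an explicit cochain~$e_0$ in each case; it computes $\ker d^0$ by comparing coefficients; and it notes that each component of~$d^{-1}$ is already injective.  Your approach instead puts a $\bZ^2$\nbd-grading on the whole complex via the shifted degree $(a,b)+kv_F$, reducing everything to the acyclicity of nine tiny complexes of free $R$\nbd-modules, which you then identify with (relative) cellular cochain complexes of the contractible pair~$(S,A)$.

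The two proofs meet at one point: the paper's four cases for exactness at~$\vgamma(D(k))^1$ are exactly your nine-region decomposition restricted to the relevant degree, since the shifted degree of~$x^iy^j$ in~$A_S$ is just~$(i,j)$.  But the paper never names the grading, and so must repeat the case split anew at each cohomological degree.  Your version is shorter, handles all four cohomological degrees simultaneously, and visibly generalises to higher-dimensional cubes (or other lattice polytopes) where an explicit chase would become unwieldy; the paper's version, by contrast, needs no topological input and is entirely self-contained.  Either would be acceptable here.
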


\begin{proof}
  It is easy to see that $d^1$ is surjective; for example, we can
  write any element $p \in R[x,\, x\inv,\, y,\, y\inv]$ as a sum $p =
  p_+ + p_-$ where $p_+ \in R[x,\, x\inv][y]$ and $p_- \in y\inv
  R[x,\, x\inv][y\inv]$, and note that $d^1$~maps the quadruple
  $(y^kp_+,\, 0,\, 0,\, y^{-k}p_-)$ to~$p$.

  \smallbreak

  To show that the complex is exact at $\vgamma\big(D(k)\big)^1$ for a
  given boundary $e_1\in\mathrm{ker}(d^1)$ we will construct an
  explicit cochain $e_0\in\vgamma\big(D(k)\big)^0$ for which $d^0(e_0)
  = e_1$.

  Fix exponents $i$ and~$j$. On the one hand, the coefficient of the
  monomial~$x^i y^j$ in $d^1(e_1) = 0$ is zero. On the other hand, we
  can systematically work out which terms of the components of~$e_1$
  contribute to the coefficient of~$x^i y^j$ in $d^1(e_1)$, as
  follows.
  \begin{enumerate}[C{a}se~1:]
  \item $|i|>k$ and $|j|>k$. The coefficient of $x^iy^j$ receives
    contribution from exactly two of the four components of~$e_1$.
    By symmetry we may assume that $i, j>k$ (the other cases being
    similar); then the first and second components of~$e_1$ must
    contain terms of the form $ax^{i}y^{j+k}$ and~$bx^{i+k}y^j$,
    respectively, with $a+b = 0$. We set $z_{i,j} =
    (-ax^{i+k}y^{j+k},\,0,\,0,\,0)$ and note that $d^0 (z_{i,j}) =
    (ax^{i}y^{j+k},\,bx^{i+k}y^j,\,0,\,0)$.
  \item $|i|\leq k$ and $|j|>k$. The coefficient of~$x^iy^j$ receives
    contributions from three of the components of~$e_1$. Let us again
    assume that both $i$ and~$j$ are non-negative. Then the first
    three components of~$e_1$ must contain terms of the form
    $ax^{i}y^{j+k}$, $bx^{i+k}y^j$ and~$cx^{i-k}y^j$, respectively,
    with $a+b+c=0$. We set $z_{i,j} = (bx^{i+k}y^{j+k},\,
    -cx^{i-k}y^{j+k},\, 0,\, 0)$ and note that $d^0 (z_{i,j}) =
    (ax^{i}y^{j+k},\, bx^{i+k}y^j,\, cx^{i-k}y^j,\, 0)$.
  \item $|i|>k$ and $|j| \leq k$. This is dealt with in a manner
    similar to the previous case.
  \item $|i| \leq k$ and $|j| \leq k$. The coefficient of~$x^iy^j$
    receives contributions from all four components of~$e_1$, which
    must contain terms of the form $ax^{i}y^{j+k}$, $bx^{i+k}y^j$,
    $cx^{i-k}y^j$ and~$dx^{i}y^{j-k}$, respectively, with
    $a+b+c+d=0$. We choose
    \[z_{i,j} = \big( (b+d)x^{i+k}y^{j+k},\, -c x^{i-k} y^{j+k},\, d
    x^{i+k}y^{j-k},\, 0 \big)\] and note that $d^0 (z_{i,j}) =
    (ax^{i}y^{j+k},\, bx^{i+k}y^j,\, cx^{i-k}y^j,\,
    dx^{i}y^{j-k})$. (Unlike before, the definition of~$z_{i,j}$ does
    involve a choice between many alternatives. The one given will
    do.)
  \end{enumerate}
  Now define $e_0 = \sum_{i,j \in \bZ} z_{i,j}$; this is a finite sum
  as only finitely many of the~$z_{i,j}$ can be non-zero. By
  construction we have $d^0(e_0) = e_1$ so that
  $\mathrm{im}(d^0)=\ker(d^1)$ as required.

  \smallbreak

  We now show that $\ker (d^0)$ coincides with the image of~$d\inv$.
  An element~$e_0$ of $\vgamma \big( D(k) \big)^0$ is of the form
  \[e_0 = \Big( \sum_{i,j \geq 0} a_{ij}x^iy^j,\, \sum_{i \leq 0 \leq
    j} b_{ij} x^iy^j,\, \sum_{j \leq 0 \leq i} c_{ij} x^i y^j,\,
  \sum_{i,j \leq 0} d_{ij} x^i y^j \Big) \ ,\] with all sums being
  finite. If $d^0 (e_0) = 0$, that is, if $e_0 \in \ker d^0$, then in
  particular (by considering the first component)
  \[-x^{-k} \cdot \sum_{i,j \geq 0} a_{ij}x^iy^j + x^k \cdot \sum_{i
    \leq 0 \leq j} b_{ij} x^iy^j = 0 \ \in R[x,x\inv,y] \ .\]
  This implies for all $j \geq 0$ that
  \begin{align*}
    a_{ij} &= b_{i-2k,j} && \text{for } 0 \leq i \leq 2k \ ,\\
    a_{ij} &= 0 && \text{for } i > 2k \ ,\\
    b_{ij} &= 0 && \text{for } i < -2k \ .
  \end{align*}
  Considering the other components of~$d^0(e_0)$ in a similar manner
  shows that all coefficients with subscripts $|i|>2k$ or $|j|>2k$
  vanish, and that
  \begin{align*}
    a_{ij} &= c_{i,j-2k} && \text{for } 0 \leq j \leq 2k,\ i \geq 0 \ ,\\
    b_{ij} &= d_{i,j-2k} && \text{for } 0 \leq j \leq 2k,\ i \leq 0 \ ,\\
    c_{ij} &= d_{i,j-2k} && \text{for } 0 \leq j \leq 2k,\ j \leq 0 \ .
  \end{align*}
  Now we choose
  \[e_{-1} = \sum_{-k \leq i,j \leq k} a_{i+k,j+k} x^i y^j \ \in R[kS
  \cap \bZ^2] \ ,\] and our characterisation of the coefficients
  of~$e_0$ above immediately gives that $d^{-1} (e_{-1}) = e_0$.

  \smallbreak

  Finally, it remains to observe that $d\inv$ is injective; in fact,
  each of the four components of~$d\inv$ is injective by itself.
\end{proof}

\begin{corollary}
  \label{cor:H_of_D_t}
  The complex $D^{\ast,t}$ is quasi-isomorphic to the finitely
  generated free $R$-module (considered as a cochain complex
  concentrated in degree~$0$)
  \[\bigoplus_{B_t} R[k_tS \cap \bZ^2]\] via the map~$d\inv$
  from the preceding lemma.\qed
\end{corollary}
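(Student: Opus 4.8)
The plan is to deduce the statement directly from Lemma~\ref{lem:horzhom} by elementary cohomological bookkeeping, together with the fact that a direct sum over the finite index set~$B_t$ commutes with passage to cohomology. First I would recall, from Construction~\ref{con:extend} and the definition~\eqref{eq:dblgammay}, that the $t$th row $D^{\ast,t}$ is the \textsc{\v Cech} complex of a $B_t$-indexed direct sum of copies of the diagram $D(k_t)$, so that by additivity of the \textsc{\v Cech} construction we have an equality of cochain complexes $D^{\ast,t} = \bigoplus_{B_t}\vgamma\big(D(k_t)\big)$; here $\vgamma\big(D(k)\big)$ is concentrated in cochain degrees $0$, $1$ and~$2$, with differentials $d^0$ and $d^1$ as displayed before Lemma~\ref{lem:horzhom}.

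Next I would compute the cohomology of $\vgamma\big(D(k)\big)$ for a fixed $k \geq 0$ using the exactness of the augmented complex~\eqref{eq:complex} provided by Lemma~\ref{lem:horzhom}. That exactness says precisely that $d\inv$ is injective, that $\mathrm{im}(d\inv) = \ker(d^0)$, that $\mathrm{im}(d^0) = \ker(d^1)$, and that $d^1$ is surjective. Therefore $H^1\big(\vgamma(D(k))\big) = H^2\big(\vgamma(D(k))\big) = 0$, while $d\inv$ restricts to an isomorphism $R[kS\cap\bZ^2] \rTo^\iso \ker(d^0) = H^0\big(\vgamma(D(k))\big)$. Viewing $R[kS\cap\bZ^2]$ as a cochain complex concentrated in degree~$0$, the map $d\inv$ is then a cochain map into $\vgamma\big(D(k)\big)$ (the only compatibility to check is $d^0 \circ d\inv = 0$, which holds by exactness), and it is a quasi-isomorphism since it induces an isomorphism in every cohomological degree. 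Now specialise to $k = k_t$.

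Finally I would take the $B_t$-indexed direct sum of these quasi-isomorphisms. Since cohomology commutes with direct sums, the induced map
\[\bigoplus_{B_t} d\inv \colon \bigoplus_{B_t} R[k_tS\cap\bZ^2] \rTo \bigoplus_{B_t}\vgamma\big(D(k_t)\big) = D^{\ast,t}\]
is again a quasi-isomorphism, and since $B_t$ and $k_tS\cap\bZ^2$ are both finite the source is a finitely generated free $R$-module; this is exactly the assertion of the corollary. There is no genuine obstacle in this argument, as all the substance resides in Lemma~\ref{lem:horzhom}; the only points needing a moment's care are the identification of the row $D^{\ast,t}$ with $\bigoplus_{B_t}\vgamma(D(k_t))$ that comes out of Construction~\ref{con:extend}, and the degree bookkeeping which turns the augmentation term $R[k_tS\cap\bZ^2]$ into a degree-$0$ cochain complex mapping into $\vgamma(D(k_t))$.
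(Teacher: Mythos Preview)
Your proposal is correct and follows exactly the route the paper intends: the text preceding the corollary already identifies $D^{\ast,t}$ with $\bigoplus_{B_t}\vgamma\big(D(k_t)\big)$, and the corollary is stated with a \qed\ precisely because it follows immediately from Lemma~\ref{lem:horzhom} by the additivity argument you spell out. You have simply made explicit what the paper leaves implicit.
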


\begin{proposition}
  \label{prop:bdfgfree}
  Let $C$ and~$Y$ be as in Construction~\ref{con:extend}. There is a
  quasi-isomorphism $\chi \colon \bfree \rTo \vgamma(Y)$ from a
  bounded cochain complex~$\bfree$ of finitely generated free
  $R$-modules with ${\bfree}^{\hskip 0.6pt i} \cong \bigoplus_{B_i}
  R[k_iS\cap\bZ^2]$.
\end{proposition}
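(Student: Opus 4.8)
The plan is to realise $\bfree$ as the extra ``column $-1$'' of an augmented form of the double complex that defines $\vgamma(Y)$, and then to read off the quasi-isomorphism from the row-exactness established in Lemma~\ref{lem:horzhom}.

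First I would recall that $\vgamma(Y) = \totds D^{*,*}$ for the double complex $D^{i,j} = \bigoplus_{\dim F = i} Y^j_F$ of~\eqref{eq:dblgammay}, which is concentrated in the three columns $i = 0,1,2$, and that by Construction~\ref{con:extend} its $j$\nbd-th row, augmented by the map~$d\inv$ of~\eqref{eq:complex}, is precisely the direct sum over the basis~$B_j$ of that complex for $k = k_j$. I set ${\bfree}^j = \bigoplus_{B_j} R[k_jS\cap\bZ^2]$; since $C$ is bounded, ${\bfree}^j$ vanishes for all but finitely many~$j$, and each ${\bfree}^j$ is finitely generated free over~$R$ because $k_jS\cap\bZ^2$ and $B_j$ are finite sets. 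By Lemma~\ref{lem:horzhom} (or Corollary~\ref{cor:H_of_D_t}) the map $\chi^j \colon {\bfree}^j \rTo D^{0,j}$ induced by~$d\inv$ is then injective with image the kernel of $d_h \colon D^{0,j} \rTo D^{1,j}$.

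Next I would equip~$\bfree$ with a differential. Because the horizontal and vertical differentials of~$D^{*,*}$ anti-commute, $d_v$ carries the kernel of $d_h\colon D^{0,j}\rTo D^{1,j}$ into the kernel of $d_h\colon D^{0,j+1}\rTo D^{1,j+1}$; transporting this restriction through the isomorphisms~$\chi^j$ yields an $R$\nbd-linear map $d_{\bfree}\colon {\bfree}^j \rTo {\bfree}^{j+1}$ characterised by $\chi^{j+1} d_{\bfree} = d_v \chi^j$, and $d_{\bfree}^2 = 0$ follows from $d_v^2 = 0$. This identity together with $d_h \chi^j = 0$ shows that the~$\chi^j$ assemble into a cochain map $\chi\colon \bfree \rTo \vgamma(Y)$ landing in the column\nbd-$0$ summand of the totalisation, and $\bfree$ has the asserted shape.

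It remains to see that $\chi$ is a quasi-isomorphism, that is, that $\mathrm{Cone}(\chi)$ is acyclic. The idea is that $\mathrm{Cone}(\chi)$ is, up to isomorphism, the direct sum totalisation of the double complex $E^{*,*}$ obtained from $D^{*,*}$ by adjoining the column $E^{-1,j} = {\bfree}^j$ with vertical differential $-d_{\bfree}$ and horizontal differential $\chi^j\colon E^{-1,j} \rTo E^{0,j}$ (cf.\ the description of the mapping cone as a totalisation in~\S\ref{sec:1-tori}); the relation $\chi^{j+1} d_{\bfree} = d_v \chi^j$ is exactly the anti-commutativity of $E^{*,*}$ along the new column, so $E^{*,*}$ is a genuine double complex, concentrated in four columns, whose $j$\nbd-th row is the augmented complex~\eqref{eq:complex} (summed over~$B_j$) and hence acyclic by Lemma~\ref{lem:horzhom}. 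Applying Lemma~\ref{lemma:dblcpx} to the zero map $E^{*,*}\rTo 0$ then shows that $\totds E^{*,*} \iso \mathrm{Cone}(\chi)$ is acyclic, which finishes the argument. The only ingredient carrying genuine content is Lemma~\ref{lem:horzhom}, which is already in hand; all the rest --- $d_{\bfree}^2 = 0$, the cochain-map property of~$\chi$, the double-complex axioms for~$E^{*,*}$, and the cone identification --- is routine sign bookkeeping, so I do not anticipate a real obstacle here.
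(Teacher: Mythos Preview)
Your argument is correct and follows essentially the same route as the paper: augment the double complex~$D^{*,*}$ by a column~$-1$ carrying $\bigoplus_{B_j} R[k_jS\cap\bZ^2]$, observe that the rows of the augmented complex are exact by Lemma~\ref{lem:horzhom}, and read off the quasi-isomorphism. The only cosmetic difference is that the paper invokes Lemma~\ref{lem:augmented_double} directly to obtain~$\chi$, whereas you identify $\mathrm{Cone}(\chi)$ with $\totds E^{*,*}$ and kill it via Lemma~\ref{lemma:dblcpx}; these are equivalent manoeuvres.
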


\begin{proof}
  Let $D^{\ast,\ast}$ denote the right half-plane double
  complex~(\ref{eq:dblgammay}) associated to~$Y$ as at the beginning
  of this subsection. We define a new double complex~$E^{\ast,\ast}$
  which agrees with~$D^{\ast,\ast}$ everywhere except that in
  column~$-1$ we put the free $R$\nbd-modules $E^{-1,t} =
  \bigoplus_{B_t} R[k_tS \cap \bZ^2]$, with vertical differential
  induced by the negative of the differential of~$Y$ and horizontal
  differential~$d\inv$ as above. The resulting double complex has
  exact rows, by Corollary~\ref{cor:H_of_D_t}, hence the induced map
  $\chi \colon E^{-1,\ast} \rTo \vgamma (Y) = \totds D^{\ast,\ast}$ is
  a quasi-isomorphism by Lemma~\ref{lem:augmented_double}. (To apply
  the Lemma we may need to re-index $C$ and~$Y$ temporarily to make
  sure that all non-zero entries live in non-negative cochain
  degrees.)
\end{proof}

\section{The nerve of the square. More Novikov rings}
\label{sec:nerve}

In this section we will describe diagrams involving both the
\textsc{Novikov} rings from Part \ref{part:1} along with other rings
we will describe shortly. For each face $F$ we will construct a
diagram $\bsd_F$, the {\textsc{\v{C}ech}} complex of which will be
quasi-isomorphic to $A_F$ considered as a cochain complex concentrated
in degree zero.

\begin{definition}
  For a non-empty face $F$ of the square $S$ we make the following
  definitions.
  \begin{enumerate}
  \item The {\em star of $F$}, denoted $\mathrm{st}(F)$, is the set of
    faces of $S$ which contain~$F$.
  \item We will denote by $\mathcal{N}_F$ the {\em nerve of
      $\mathrm{st}(F)$}, the simplicial complex on the vertex set
    $\mathrm{st}(F)$ in which the faces are given by the sequences of
    strict inclusions of elements of $\mathrm{st}(F)$. The faces of
    $\mathcal{N}_F$ will also referred to as {\em flags} of faces of
    $S$.
  \end{enumerate}
\end{definition}

For each face $\tau$ of~$\mathcal{N}_F$ we will define a ring
$\comp{\tau}$. Note that $\tau$ can occur in $\mathcal{N}_F$ for many
faces $F$; the definition of $\comp{\tau}$ does not depend on $F$,
however, which is reflected by the notation.

\begin{definition}
  By our previous definitions we have $A_{v_{bl}}=R[x,y]$,
  $A_{e_b}=R[x,x\inv,y]$ and $A_S=\LL$. We now define the following
  rings:
  \begin{align*}
    &\comp{v_{bl}}=R\powers{x,y}           &&   \comp{v_{bl},S}=R\nov{x,y}      \\
    &\comp{e_b}=R[x,x\inv]\powers{y}      &&   \comp{e_b,S}=R[x,x\inv]\nov{y}  \\
    &\comp{v_{bl},e_b}=R\nov{x}\powers{y}  &&   \comp{v_{bl},e_b,S}=R\nov{x}\nov{y}
  \end{align*}
  Also, we define $\comp{S}=A_S$. If $e_b$ is replaced by $e_l$
  throughout, we have the same definitions with $x$ and~$y$
  swapped. To replace the subscript~``$l$'' by~``$r$'' everywhere we
  replace $x$ by~$x\inv$, and finally to replace the subscript ``$b$''
  by ``$t$'' everywhere we replace $y$ by~$y\inv$.
\end{definition}

This defines a ring $\comp{\tau}$ for any non-empty flag~$\tau$ of
faces of~$S$. For example, we have
\begin{align*}
  \comp{v_{br}} &=  R\powers{x\inv,y} \ \text{and} \\
  \comp{v_{tr},\, e_r} & = R\nov{y\inv}\powers{x\inv}\ .  
\end{align*}

\medbreak

For each face $F$ of the square we have a diagram of $R$\nbd-modules
\[\bsd_F\colon\mathcal{N}_F\rTo R\text{-}\mathrm{Mod}\,, \quad \tau
\mapsto \comp{\tau}\] with maps given by inclusion of rings. Here and
elsewhere we consider~$\mathcal{N}_F$ as a poset ordered by inclusion
of flags.

We first exhibit the case when $F = \{v\}$ is a vertex. The star
of~$v$ is the set $\{v,e_x,e_y,S\}$ where $e_x$ and $e_y$ are the two
edges incident to $v$.  Specifically we take $v = v_{tr} = (1,1)$,
$e_x = e_r$ to be the right vertical edge of~$S$, and $e_y = e_t$ to
be the top horizontal edge (the situation for the other vertices will
be similar). The diagram $\bsd_v$ is given in
Fig.~\ref{fig:nervevert}. The entries are specified using both the
abstract notation $\comp{\tau}$ as well as the concrete
\textsc{Novikov} rings.
\begin{figure}[bt]
\resizebox{\textwidth}{!}{%
\begin{diagram}[PS,h=1.6em,w=0.5em]
\ou{\fy}{e_t}  &&&          &           &\rTo     &\ou{\fvy}{v_{tr},e_t}&  \lTo  &            &&&          & \ou{\fv}{v_{tr}} \\
\dTo           &&&          &           &\ldTo(2,4)&             &          &            &&&\ldTo(6,6)& \dTo             \\
               &&&          &           &          &             &          &            &&&          &                  \\
               &&&          &           &          &             &          &            &&&          &                  \\
               &&&      &\ou{\fvys}{v_{tr},e_t,S}&      &             &          &            &&&          &                  \\
               &&&\ruTo(4,2)&           &\luTo(2,2)&             &          &            &&&          &                  \\
\ou{\fys}{e_t,S}&&&         &           &          &\ou{\fvs}{v_{tr},S}&         &            &&&          & \ou{\fvx}{v_{tr},e_r} \\
\uTo           &&&          &           &\ruTo(6,6)&             &\rdTo(2,2)&            &&&\ldTo(4,2)& \uTo             \\
               &&&          &           &          &             &       &\ou{\fvxs}{v_{tr},e_r,S}&&&      &                  \\
               &&&          &           &          &             &\ruTo(2,4)&            &&&          &                  \\
               &&&          &           &          &             &          &            &&&          &                  \\
               &&&          &           &          &             &          &            &&&          &                  \\
\ou{\fs}{S}    &&&          &           &\rTo     &\ou{\fxs}{e_r,S}& \lTo   &            &&&          &  \ou{\fx}{e_r}  
\end{diagram}}
\caption{The diagram $\bsd_v$ for $v= v_{tr} = (1,1)$}
\label{fig:nervevert}
\end{figure}

For the case when $F$ is an edge we take $F=e_x$ as defined above (the
other cases being similar), and note that the diagram $\bsd_{e_x}$
appears as a restriction of $\bsd_{v}$ since $\mathcal{N}_{e_x}$ is an
order ideal of $\mathcal{N}_{v}$.  Explicitly, $\bsd_{e_r}$ looks like
this:
\begin{equation}
  \begin{matrix}
    \fs & \rTo & \fxs & \lTo & \fx \\
    \noalign{\vskip 1.5 \smallskipamount}
    \comp{S} && \comp{e_r,S} && \comp{e_r}
  \end{matrix}
  \label{diag:Nex}
\end{equation}

Finally, if $F$ is the square~$S$ itself, then the nerve
$\mathcal{N}_S$ of $\mathrm{st} (S)$ is just~$\{S\}$ and the
diagram~$\bsd_{S}$ consists of only~$A_S = \comp{S}$.

\medbreak

Back to general~$F$, we equip $\mathcal{N}_F$ with degree and
incidence functions in the sense of \S\ref{sec:cech-complexes}: the
degree function is given by the (simplicial) dimension, which assigns
to a flag with $k+1$ entries dimension~$k$, and the standard
simplicial incidence numbers. To explain the latter, note that a
flag~$\tau$ is totally ordered, so we can let $d_i (\tau)$ denote the
flag obtained by omitting the $i$th entry ($0 \leq i \leq \dim \tau$)
and set $[d_i(\tau):\tau] = (-1)^i$; all other incidence numbers
vanish.  --- The diagram~$\bsd_F$ then has an associated \textsc{\v
  Cech} complex $\vgamma (\bsd_F)$. Explicitly,
\[\vgamma(\bsd_F)^t = \bigoplus_{\substack{\tau \in \mathcal{N}_F \\
    \dim\,\tau=t}} \comp{\tau}\] with differential induced by
\[\comp{\tau}\rTo^{[\tau:\mu]}\comp{\mu} \ .\]

\goodbreak

\section{Decomposing diagrams. \v Cech cohomology calculations}

We keep the notation from~\S\ref{sec:nerve}. For a fixed face~$F$
of~$S$, the $R$\nbd-module diagram~$\bsd_F$ is in fact a diagram of
$A_F$\nbd-modules: all its entries contain $A_F$ as a subring, and all
structure maps are $A_F$\nbd-linear. In particular, its \textsc{\v
  Cech} complex $\vgamma(\bsd_F)$ is an $A_F$\nbd-module
complex. Moreover, by the property~(DI3) of incidence numbers, the
inclusion maps of subrings assemble to a map of $A_F$\nbd-module
cochain complexes
\begin{equation}
  \label{eq:sigma_F}
  \sigma_F \colon A_F \rTo \vgamma (\bsd_F) \ ,
\end{equation}
where we consider
the left hand side as a cochain complex concentrated in degree~$0$ as
usual.

\begin{lemma}
  \label{lem:morse}
  The map~$\sigma_F$ is a quasi-isomorphism.
\end{lemma}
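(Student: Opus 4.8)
The statement to prove is that the augmentation map $\sigma_F \colon A_F \rTo \vgamma(\bsd_F)$ is a quasi-isomorphism for every non-empty face $F$ of the square $S$. The plan is to analyse this case by case according to the dimension of $F$, since the nerve $\mathcal{N}_F$ has very different shapes depending on whether $F$ is the square itself, an edge, or a vertex. The trivial case is $F = S$: here $\mathcal{N}_S = \{S\}$ is a single point, $\vgamma(\bsd_S)$ is just $A_S$ concentrated in degree $0$, and $\sigma_S$ is the identity.

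For the edge case, say $F = e_r$, the nerve $\mathcal{N}_{e_r}$ is the ``cospan'' poset with three objects $\{e_r\} \subset \{e_r, S\} \supset \{S\}$ (plus the degree-$1$ flag $\{e_r \subset S\}$), so by \eqref{diag:Nex} the \v{C}ech complex $\vgamma(\bsd_{e_r})$ is the totalisation of the diagram $\comp{S} \oplus \comp{e_r} \rTo \comp{e_r, S}$ in degrees $0$ and $1$; concretely this is $\bigl(R[x,x\inv,y,y\inv] \oplus R\nov{y\inv}\powers{x\inv}\bigr) \rTo R[y,y\inv]\nov{x\inv}$ (here I am reading off the rings from the definitions, treating $e_r$ as the face where $x\inv$ becomes ``large''). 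The map $\sigma_{e_r}$ sends $A_{e_r} = R[x\inv, y, y\inv]$ diagonally into the degree-$0$ part. One checks directly that this is a quasi-isomorphism: the differential $(a,b) \mapsto \iota(a) - \iota(b)$ into $R[y,y\inv]\nov{x\inv}$ is surjective because every Laurent series finite to the right decomposes as a genuine Laurent polynomial part plus a series in strictly positive powers lying in $\comp{e_r}$; and the kernel consists of pairs $(a,a)$ with $a$ lying in $R[x,x\inv,y,y\inv] \cap R\nov{y\inv}\powers{x\inv} = A_{e_r}$, which is exactly the image of $\sigma_{e_r}$. The other three edges are handled by the coordinate substitutions $x \leftrightarrow y$, $x \mapsto x\inv$, $y \mapsto y\inv$ indicated after the definition of the $\comp{\tau}$.

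For the vertex case, say $F = v_{tr}$, the nerve $\mathcal{N}_{v_{tr}}$ is the order complex of the face poset of $\mathrm{st}(v_{tr}) = \{v_{tr}, e_r, e_t, S\}$, which is the barycentric subdivision of a triangle (a $2$-dimensional simplicial complex with $4$ vertices, $5$ edges, $2$ triangles — the two maximal flags being $v_{tr} \subset e_r \subset S$ and $v_{tr} \subset e_t \subset S$), as displayed in Fig.~\ref{fig:nervevert}. The plan here is \emph{not} to chase the whole complex by hand but to peel it apart using the lower-triangular machinery of Lemma~\ref{lem:triangle_maps} (equivalently, filter $\vgamma(\bsd_{v_{tr}})$ by subcomplexes and compare long exact sequences). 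A clean way: the two codimension-$0$ edges $e_r, e_t$ of $\mathcal{N}_{v_{tr}}$ each carry a sub-cospan isomorphic to the edge situation just treated, and one can first show that the inclusion of the ``$v_{tr}$-corner'' sub-diagram $\comp{v_{tr}} \rTo \comp{v_{tr}, e_r} \oplus \comp{v_{tr}, e_t} \rTo \comp{v_{tr}, e_r, S} \oplus \comp{v_{tr}, e_t, S} \leftarrow \comp{v_{tr}, S}$ is quasi-isomorphic to $\comp{v_{tr}}$ by an analogous two-step decomposition (first contract along $e_r$, then along $e_t$, using that the relevant intersections of Novikov rings collapse to the smaller ring — e.g.\ $R\nov{x\inv}\powers{y\inv} \cap R\nov{y\inv}\powers{x\inv}$ inside $R\nov{x\inv, y\inv}$ is $R\powers{x\inv, y\inv}$). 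Then one shows that $\vgamma(\bsd_{v_{tr}})$ deformation-retracts onto this corner sub-diagram, again via a filtration whose successive quotients are contractible cospans $A_G \rTo \comp{\tau G} \leftarrow \comp{\tau}$ for the non-corner flags. The main obstacle is bookkeeping: verifying that all the pairwise and triple intersections of the eight Novikov-type rings behave as expected (that each ``inclusion-minus-inclusion'' map is onto with kernel the smaller ring), i.e.\ the purely algebraic facts in the spirit of Lemma~\ref{lem:units_in_Novikov}, and keeping the signs/orientations straight so that the filtration steps really do produce acyclic (cospan-shaped) quotient complexes. Once those local algebraic lemmas are in hand, the assembly via Lemma~\ref{lem:triangle_maps} and the Five Lemma is formal, and all remaining vertices follow by the coordinate substitutions.
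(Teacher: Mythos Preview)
Your edge case is fine: the direct kernel/cokernel computation you sketch is essentially the content of the paper's argument, though the paper phrases it as a splitting $\bsd_{e_r} = \mathcal{D}_+ \oplus \mathcal{D}_-$ by positive versus non-positive powers of~$x$. (You have a typo: $\comp{e_r} = R[y,y\inv]\powers{x\inv}$, not $R\nov{y\inv}\powers{x\inv}$; with the correct ring your intersection argument goes through.)

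The vertex case has a genuine gap. The ``corner'' subcomplex (flags containing $v_{tr}$) is indeed a sub-cochain-complex of $\vgamma(\bsd_{v_{tr}})$, but its $H^0$ is zero, not $\comp{v_{tr}}$: the degree-$0$ part of the corner is $\comp{v_{tr}}$ alone, and the differential $\comp{v_{tr}} \to \comp{v_{tr},e_r} \oplus \comp{v_{tr},e_t} \oplus \comp{v_{tr},S}$ is injective. So the corner cannot be quasi-isomorphic to $\comp{v_{tr}}$ as you claim, and $\sigma_{v_{tr}}$ does not factor through it. One might try to repair this by showing instead that the corner is acyclic and that the non-corner \emph{quotient} carries $A_{v_{tr}}$ in $H^0$; but then the intersections-of-Novikov-rings computations you flag as ``the main obstacle'' become genuinely delicate (e.g.\ $\comp{v_{tr},e_r}$ and $\comp{v_{tr},S}$ are not contained in one another, and their intersection inside $\comp{v_{tr},e_r,S}$ is not one of the rings you have names for), and you have not indicated how to handle them.

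The paper sidesteps all of this with a different decomposition: it splits the diagram $\bsd_{v_{tr}}$ as a direct sum of four $R$\nbd-module diagrams ${}^\bullet\bsd$, $\bsd^\bullet$, ${}_\bullet\bsd$, $\bsd_\bullet$ by restricting each ring entry to monomials $x^iy^j$ with $(i,j)$ lying in one of the four quadrants. After this restriction many entries become $0$ or coincide with their neighbours, so each quadrant diagram can be collapsed by a short filtration in which every successive kernel has \v Cech complex a two-step complex with an \emph{isomorphism} as differential. Three quadrants die entirely; the fourth ($_\bullet\bsd$) reduces to a single entry $A_{v_{tr}}$. This monomial-support trick is the key idea you are missing: it replaces your proposed ring-intersection calculations with trivial ones.
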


\begin{proof}
  We note that there is nothing to prove in case $F = S$ as
  $\vgamma(\bsd_S) = \comp{S}=A_S$ and $\sigma_S = \id_{A_S}$.

  \smallskip

  Next suppose that $F$ is an edge of~$S$. We will treat the case $F =
  e_r = \{1\} \times [-1,1]$ only, the other cases are identical apart
  from possible coordinate changes replacing an indeterminate $x$
  or~$y$ by its inverse, or swapping their roles. --- The diagram
  $\bsd_{e_r}$ is depicted in~(\ref{diag:Nex}). Considered as a
  diagram of $R$\nbd-modules it decomposes as the direct sum of two
  diagrams of the same shape, corresponding to non-positive and
  positive powers of~$x$ respectively: $\bsd_{e_r} = \mathcal{D}_+
  \oplus \mathcal{D}_-$ where
  \begin{align*}
    \mathcal{D}_- &= \big( \ \hphantom{x}R[x\inv,y,y\inv] \rTo
    \hphantom{x}R[y,y\inv]\powers{x\inv} \lTo R[y,y\inv]\powers{x\inv} \ \big) \\
    \noalign{\noindent and} \mathcal{D}_+ &= \big( \
    xR[x,y,y\inv]\hphantom{{}\inv} \rTo xR[x,y,y\inv]
    \hphantom{{}\inv[]} \lTo 0 \ \big) \ .
  \end{align*}
  Upon application of~$\vgamma$, the short exact sequence of diagrams
  \[0 \rTo \mathcal{D}_+ \rTo \bsd_{e_r} \rTo \mathcal{D}_- \rTo 0\]
  translates into a short exact sequence of \textsc{\v Cech} complexes
  \[0 \rTo \vgamma (\mathcal{D}_+) \rTo \vgamma (\bsd_{e_r})
  \rTo^\beta \vgamma (\mathcal{D}_-) \rTo 0 \ .\] Now
  $\vgamma(\mathcal{D}_+)$ is acyclic (it is a two-step complex with
  only non-trivial differential being an isomorphism) so that
  $\beta$~is a quasi-isomorphism. Introducing the diagrams
  \begin{align*}
    \mathcal{E}_0 &= \big( \ 0 \rTo R[y,y\inv]\powers{x\inv} \lTo
    R[y,y\inv]\powers{x\inv} \ \big) \\
    \noalign{\noindent and}
    \mathcal{E}_1 &= \big( \ R[x\inv,y,y\inv] \rTo 0 \lTo 0 \ \big)
  \end{align*}
  we have a short exact sequence
  \[0 \rTo \mathcal{E}_0 \rTo \mathcal{D}_- \rTo \mathcal{E}_1
  \rTo 0\] and consequently a short exact sequence
  \[0 \rTo \vgamma (\mathcal{E}_0) \rTo \vgamma (\mathcal{D}_-)
  \rTo^\gamma \vgamma (\mathcal{E}_1) \rTo 0 \ .\] Now $\vgamma
  (\mathcal{E}_0)$ is acyclic by the same reasoning as before so that
  $\gamma$~is a quasi-isomorphism to $\vgamma (\mathcal{E}_1) =
  A_{e_r}$. As clearly $\gamma \circ \beta \circ \sigma_{e_r} =
  \id$ 
  it follows that $\sigma_{e_r}$~is a quasi-isomorphism.

  \smallskip

  We finally deal with the case $F = v$ a vertex. We will treat
  $v=v_{tr} = (1,1)$ explicitly, the other cases are similar (and
  follow formally by change of variables). --- Informally speaking, we
  decompose the diagram $\bsd_{v_{tr}}$ as a direct sum of $R$\nbd-module
  diagrams by restriction to the sets
  \begin{align*}
    {}^{\bullet}Q=&  \{x^iy^j \mid i\leq0,\,j> 0\}\ , &
    Q^{\bullet}  =&  \{x^iy^j \mid i>0,\,j>0\} \ , \\ 
    \noalign{\smallskip}
    {}_{\bullet}Q=&  \{x^iy^j \mid i\leq0,\,j\leq 0\}\ , & 
    Q_{\bullet}  =&  \{x^iy^j \mid i> 0,\,j\leq 0\}
  \end{align*}
  which correspond to the four quadrants (with or without various
  boundary components included). We denote the restricted diagrams by
  ${\bsd}^{\bullet}$, ${}^{\bullet}{\bsd}$, ${}_{\bullet}{\bsd}$ and
  ${\bsd}_{\bullet}$ respectively. The four resulting summands are
  shown in Fig.~\ref{fig:quadrants}. The diagram appearing in the top
  right is ${\bsd}^{\bullet}$. The top left diagram is
  ${}^{\bullet}\bsd$. Similarly, ${}_{\bullet}{\bsd}$ is the bottom
  left diagram and ${\bsd}_{\bullet}$ is the bottom right diagram.
  \begin{figure}\scalebox{0.95}{%
    \begin{diagram}[PS,h=1.05em,w=0.1em,landscape]
      \by & \rTo &&          &     &          &\bvy&   \lTo   &     &&&          & \bv   &&&\;\;\quad&&&
      \ay & \rTo &&          &     &          &\avy&   \lTo   &     &&&          & \av   \\
      \dTo&      &&          &     &\ldTo(2,4)&    &          &     &&&\ldTo(6,6)& \dTo  &&&&&&
      \dTo&      &&          &     &\ldTo(2,4)&    &          &     &&&\ldTo(6,6)& \dTo  \\
      &      &&          &     &          &    &          &     &&&          &       &&&&&&
      &      &&          &     &          &    &          &     &&&          &       \\
      &      &&          &     &          &    &          &     &&&          &       &&&&&&
      &      &&          &     &          &    &          &     &&&          &       \\
      &      &&          &\bvys&          &    &          &     &&&          &       &&&&&&
      &      &&          &\avys&          &    &          &     &&&          &       \\
      &      &&\ruTo(4,2)&     &\luTo(2,2)&    &          &     &&&          &       &&&&&&
      &      &&\ruTo(4,2)&     &\luTo(2,2)&    &          &     &&&          &       \\
      \bys&      &&          &     &          &\bvs&          &     &&&          & \bvx  &&&&&&
      \ays&      &&          &     &          &\avs&          &     &&&          & \avx  \\
      \uTo&      &&          &     &\ruTo(6,6)&    &\rdTo(2,2)&     &&&\ldTo(4,2)& \uTo  &&&&&&
      \uTo&      &&          &     &\ruTo(6,6)&    &\rdTo(2,2)&     &&&\ldTo(4,2)& \uTo  \\
      &      &&          &     &          &    &          &\bvxs&&&          &       &&&&&&
      &      &&          &     &          &    &          &\avxs&&&          &       \\
      &      &&          &     &          &    &\ruTo(2,4)&     &&&          &       &&&&&&
      &      &&          &     &          &    &\ruTo(2,4)&     &&&          &       \\
      &      &&          &     &          &    &          &     &&&          &       &&&&&&
      &      &&          &     &          &    &          &     &&&          &       \\
      &      &&          &     &          &    &          &     &&&          &       &&&&&&
      &      &&          &     &          &    &          &     &&&          &       \\
      \bs & \rTo &&          &     &          &\bxs&   \lTo   &     &&&          &  \bx  &&&&&&
      \as & \rTo &&          &     &          &\axs&   \lTo   &     &&&          &  \ax  \\
      &      &&          &     &          &    &          &     &&&          &       &&&&&&
      &      &&          &     &          &    &          &     &&&          &       \\
      &      &&          &     &          &    &          &     &&&          &       &&&&&&
      &      &&          &     &          &    &          &     &&&          &       \\
      &      &&          &     &          &    &          &     &&&          &       &&&&&&
      &      &&          &     &          &    &          &     &&&          &       \\
      &      &&          &     &          &    &          &     &&&          &       &&&&&&
      &      &&          &     &          &    &          &     &&&          &       \\
      \cy & \rTo &&          &     &          &\cvy&   \lTo   &     &&&          & \cv   &&&&&&
      \dy & \rTo &&          &     &          &\dvy&   \lTo   &     &&&          & \dv   \\
      \dTo&      &&          &     &\ldTo(2,4)&    &          &     &&&\ldTo(6,6)& \dTo  &&&&&&
      \dTo&      &&          &     &\ldTo(2,4)&    &          &     &&&\ldTo(6,6)& \dTo  \\
      &      &&          &     &          &    &          &     &&&          &       &&&&&&
      &      &&          &     &          &    &          &     &&&          &       \\
      &      &&          &     &          &    &          &     &&&          &       &&&&&&
      &      &&          &     &          &    &          &     &&&          &       \\
      &      &&          &\cvys&          &    &          &     &&&          &       &&&&&&
      &      &&          &\dvys&          &    &          &     &&&          &       \\
      &      &&\ruTo(4,2)&     &\luTo(2,2)&    &          &     &&&          &       &&&&&&
      &      &&\ruTo(4,2)&     &\luTo(2,2)&    &          &     &&&          &       \\
      \cys&      &&          &     &          &\cvs&          &     &&&          & \cvx  &&&&&&
      \dys&      &&          &     &          &\dvs&          &     &&&          & \dvx  \\
      \uTo&      &&          &     &\ruTo(6,6)&    &\rdTo(2,2)&     &&&\ldTo(4,2)& \uTo  &&&&&&
      \uTo&      &&          &     &\ruTo(6,6)&    &\rdTo(2,2)&     &&&\ldTo(4,2)& \uTo  \\
      &      &&          &     &          &    &          &\cvxs&&&          &       &&&&&&
      &      &&          &     &          &    &          &\dvxs&&&          &       \\
      &      &&          &     &          &    &\ruTo(2,4)&     &&&          &       &&&&&&
      &      &&          &     &          &    &\ruTo(2,4)&     &&&          &       \\
      &      &&          &     &          &    &          &     &&&          &       &&&&&&
      &      &&          &     &          &    &          &     &&&          &       \\
      &      &&          &     &          &    &          &     &&&          &       &&&&&&
      &      &&          &     &          &    &          &     &&&          &       \\
      \cs & \rTo &&          &     &          &\cxs&   \lTo   &     &&&          &  \cx  &&&&&&
      \ds & \rTo &&          &     &          &\dxs&   \lTo   &     &&&          &  \dx  
    \end{diagram}}%
    \caption{Decomposition of $\bsd_{v_{tr}}$}
    \label{fig:quadrants}
  \end{figure}

  Note that the splitting $\bsd_{v_{tr}} = {}_\bullet \bsd \oplus {}^\bullet \bsd
  \oplus \bsd_\bullet \oplus \bsd^\bullet$ yields a corresponding
  splitting of \textsc{\v Cech} complexes
  \[\vgamma(\bsd_{v_{tr}}) = \vgamma({}_\bullet \bsd) \oplus
  \vgamma({}^\bullet \bsd) \oplus \vgamma(\bsd_\bullet) \oplus
  \vgamma(\bsd^\bullet) \ .\]
  We will show
  \begin{enumerate}[(i)]
  \item that the last three summands are acyclic so that the
    projection map $\pi \colon \vgamma(\bsd_{v_{tr}}) \rTo \vgamma({}_\bullet
    \bsd)$ is a quasi-isomorphism, and
  \item that there is a quasi-isomorphism $\beta \colon
    \vgamma({}_\bullet \bsd) \rTo A_{v_{tr}}$ such that the composite map
    \[A_{v_{tr}} \rTo^{\sigma_{v_{tr}}} \vgamma(\bsd_{v_{tr}}) \rTo^\pi \vgamma ({}_\bullet
    \bsd) \rTo^\beta A_{v_{tr}}\] is the identity.
  \end{enumerate}
  It then follows that $\sigma_{v_{tr}}$~is a quasi-isomorphism as claimed.

  \smallskip

  The main idea in both cases is to use a suitable filtration of
  diagrams, as was done implicitly in the case of an edge above. More
  precisely, we will look at a chain of epimorphisms of $R$\nbd-module
  diagrams indexed by~$\mathcal{N}_{v_{tr}}$
  \[X_0 \rTo^{\kappa_1} X_1 \rTo^{\kappa_2} \ldots \rTo^{\kappa_k}
  X_k\] such that $\vgamma (\ker \kappa_j)$ is acyclic for
  $1 \leq j \leq k$. From the short exact sequence
  \[0 \rTo \vgamma(\ker \kappa_j) \rTo \vgamma(X_{j-1})
  \rTo[l>=4em]^{\vgamma(\kappa_j)} \vgamma(X_j) \rTo 0\] we then infer that
  the map $\vgamma(\kappa_j)$~is a quasi-isomorphism.

  \smallskip

  Let us consider specifically the diagram~$X_0 = {}^\bullet \bsd$
  (top left in Fig.~\ref{fig:quadrants}). We let~$X_1$ have the same
  entries and structure maps as~$X_0$ except at the flags $\{v_{tr},\,
  e_r\}$ and~$\{v_{tr},\, e_r,\, S\}$ where $X_1$ is trivial (see
  Fig.~\ref{fig:nervevert} for a reminder on the indexing); the
  incident structure maps are forced to be zero maps, of course. The
  map $\kappa_1$ is the identity where possible, or else the zero
  map. The \textsc{\v Cech} complex $\vgamma (\ker \kappa_1)$ is a
  two-step complex with an isomorphism as differential and is thus
  acyclic. --- We construct further diagrams $X_j$ in a similar manner
  from~$X_{j-1}$, by prescribing two flags $\tau_1$ and~$\tau_2$ on
  which the former differs from the latter in taking the zero module
  as value, and by declaring $\kappa_j$ to be the identity where
  possible. In detail, we choose\goodbreak
  \begin{itemize}
  \item[$j=2$:] $\tau_1 = \{ e_r\}$ and $\tau_2 = \{e_r,\, S \}$;
  \item[$j=3$:] $\tau_1 = \{ v_{tr},\, S\}$ and $\tau_2 = \{ v_{tr},\, e_t,\, S\}$;
  \item[$j=4$:] $\tau_1 = \{ S\}$ and $\tau_2 = \{ e_t,\, S\}$.
  \end{itemize}
  This makes $X_4$ the trivial all-zero diagram so that
  $\vgamma({}^\bullet \bsd)$ is quasi-isomorphic to the zero complex
  via $\vgamma(\kappa_4 \kappa_3 \kappa_2 \kappa_1)$. --- The diagrams
  $\bsd^\bullet$ and~$\bsd_\bullet$ can be dealt with in a similar
  manner. This proves~(i).

  \smallskip

  To prove~(ii) we employ a suitable filtration of $X_0 = {}_\bullet
  \bsd$: we let $X_j$ and $\kappa_j$ be determined in the manner
  described above by the choices
  \begin{itemize}
  \item[$j=1$:] $\tau_1 = \{ v_{tr},\, e_r\}$ and $\tau_2 = \{ v_{tr},\, e_r,\, S\}$;
  \item[$j=2$:] $\tau_1 = \{ e_r\}$ and $\tau_2 = \{ e_r,\, S\}$;
  \item[$j=3$:] $\tau_1 = \{ v_{tr},\, e_t\}$ and $\tau_2 = \{ v_{tr},\, e_t,\, S\}$;
  \item[$j=4$:] $\tau_1 = \{ e_t\}$ and $\tau_2 = \{ e_t,\, S\}$;
  \item[$j=5$:] $\tau_1 = \{ v_{tr}\}$ and $\tau_2 = \{ v_{tr},\, S\}$.
  \end{itemize}
  The diagram~$X_5$ has a single non-trivial entry, {\it viz.}, the
  entry~$A_{v_{tr}}$ at position~$S$ so that $\vgamma (X_5) =
  A_{v_{tr}}$. The map $\beta = \vgamma (\kappa_5 \kappa_4 \kappa_3
  \kappa_2 \kappa_1)$ satisfies all the required properties.
\end{proof}

We also need to record naturality properties of the
maps~$\sigma_F$. Let $F \subseteq G$ be faces of~$S$. Since every
$\tau \in \mathcal{N}_G$ is also an element of~$\mathcal{N}_F$, we can
define a ``projection'' map
\begin{equation}
  \label{eq:proj_NF_NG}
  \lambda_{FG} \colon \vgamma (E_F) \rTo \vgamma (E_G)
\end{equation}
which maps summands occurring in both complexes by the identity, and
maps all other summands of the source to~$0$. (Note that
$\vgamma(E_G)$ is, in general, not a direct summand of~$\vgamma
(E_F)$.) It is a matter of straightforward checking that
$\sigma_G|_{A_F} = \lambda_{FG} \circ \sigma_F$, and that we have
\begin{equation}
  \label{eq:lambda_functorial}
  \lambda_{FH} = \lambda_{FG} \circ \lambda_{GH}
\end{equation}
for a triple of faces $F \subseteq G \subseteq H$ of~$S$.

\section{Partial totalisations of triple complexes. Applications}
\label{sec:final_proof}

For this last section of the paper we assume throughout that $C$
and~$Y$ are as in Construction~\ref{con:extend}; that is, we assume
that $C$ is a bounded cochain complex of finitely generated free
$\LL$\nbd-modules, with $C^n$ having basis~$B_n$, and that $Y$~is a
bounded $\fp$\nbd-diagram in the sense of
Definition~\ref{def:presheaf} with $Y_S = C$, with each $Y^n$
isomorphic to a $B_n$\nbd-indexed direct sum of diagrams of the
form~$D(k_n)$. We write $s_{FG} \colon Y_F \rTo Y_G$ for the structure
map associated to the inclusion of faces $F \subseteq G$.

\medbreak

We will introduce the following complexes and maps between them:
\begin{gather*}
  \bfree \rTo^\simeq \totds S^{\ast,\ast,\ast} \rTo^\simeq \totds
  T^{\ast,\ast,\ast} \rTo^\simeq \totds U^{\ast,\ast,\ast} \\
  \noalign{\noindent and} \totds V^{\ast,\ast,\ast} \iso \totds
  W^{\ast,\ast} \lTo^\simeq C \ ,\\
  \noalign{\noindent and a splitting of complexes} \totds
  U^{\ast,\ast,\ast} \iso \totds V^{\ast,\ast,\ast} \oplus \,?\,\ .
\end{gather*}
Provided all the cochain complexes listed in~\eqref{eq:cond_edge}
and~\eqref{eq:cond_vertex} are acyclic, the maps marked ``$\simeq$''
above are quasi-isomorphisms; hence we obtain, in the derived category
of~$R$, morphisms $C \rTo^{\bar s} B' \rTo^{\bar r} C$ with $\bar r
\circ \bar s = \id_C$. Since both $C$ and~$B'$ are bounded complexes
of free $R$\nbd-modules we can lift these morphisms to $R$\nbd-linear
maps $C \rTo^s B' \rTo^r C$ with $r \circ s \simeq \id_C$. Since
$B'$~is finitely generated this shows $C$~to be $R$\nbd-finitely
dominated \cite[Proposition~3.2]{MR815431}, thereby finishing the
proof of the implication (b)~$\Rightarrow$~(a) of the Main Theorem.

\medbreak

We start with the triple complex
\[S^{u,s,t} =
\begin{cases}
  0 & \text{for } t \neq 0 \ , \\
  \bigoplus_{\substack{F \subseteq S \\ \dim F = u}} Y^s_F & \text{for
  } t = 0 \ .
\end{cases}\] Differentials are necessarily trivial in
$z$\nbd-direction; for fixed~$s$, the complex $S^{\ast,s,0}$ is the
\textsc{\v Cech} complex of the constant $\fp$\nbd-indexed
diagram~$Y^s$, and for fixed~$u$ the complex $S^{u,\ast,0}$ is a
direct sum of complexes~$Y_F$ with differential changed by the
sign~$(-1)^u$.

We note that $S^{\ast,\ast,\ast}$ is actually a double complex in
disguise, and that $\totds S^{\ast,\ast,\ast} = \totds S^{\ast,\ast,0}
= \vgamma (Y)$. Hence by Proposition~\ref{prop:bdfgfree}:

\begin{lemma}
  There exists a bounded cochain complex~$\bfree$ of finitely
  generated projective $R$\nbd-modules, together with a
  quasi-isomorphism
  \[\chi \colon \bfree \rTo \totds S^{\ast,\ast,\ast} \
  .\tag*{\qedsymbol}\]
\end{lemma}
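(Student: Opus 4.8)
The statement is essentially a repackaging of Proposition~\ref{prop:bdfgfree}, so the plan is to make the identification $\totds S^{\ast,\ast,\ast} = \vgamma(Y)$ explicit and then quote that proposition, observing that finitely generated free modules are in particular finitely generated projective.

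First I would note that the triple complex $S^{\ast,\ast,\ast}$ is concentrated in the plane $t=0$, so all its $z$\nbd-direction differentials vanish; it is thus ``a double complex in disguise'', and its direct sum totalisation as a triple complex coincides with the direct sum totalisation of the double complex $S^{\ast,\ast,0}$. Unwinding the definition of~$S^{\ast,\ast,\ast}$ one sees that for fixed~$s$ the row $S^{\ast,s,0}$ is the \textsc{\v Cech} complex $\vgamma(Y^s)$ of the constant cochain level~$Y^s$, and that the vertical differential in column~$u$ is $(-1)^u$ times the differential induced by the~$d_F$ with $\dim F = u$. In other words $S^{\ast,\ast,0}$ is precisely the double complex $D^{\ast,\ast}$ of~\eqref{eq:dblgammay} whose totalisation was used to define $\vgamma(Y)$ in~\S\ref{sec:cech-complexes}, whence $\totds S^{\ast,\ast,\ast} = \totds S^{\ast,\ast,0} = \vgamma(Y)$.

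Next I would invoke Proposition~\ref{prop:bdfgfree} directly: it furnishes a bounded cochain complex~$\bfree$ with ${\bfree}^i \cong \bigoplus_{B_i} R[k_iS \cap \bZ^2]$ — in particular a bounded complex of finitely generated free, hence finitely generated projective, $R$\nbd-modules — together with a quasi-isomorphism $\chi \colon \bfree \rTo \vgamma(Y)$. Composing with the equality $\vgamma(Y) = \totds S^{\ast,\ast,\ast}$ established in the previous step yields the required quasi-isomorphism, and the lemma follows.

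I do not anticipate any genuine obstacle here, as the content is purely bookkeeping; the only point requiring a moment's care is verifying that the ``triple complex in disguise''~$S^{\ast,\ast,\ast}$ totalises to exactly the double complex appearing in Proposition~\ref{prop:bdfgfree}, which is immediate from the construction of~$S$ and the conventions of~\S\ref{sec:cech-complexes}.
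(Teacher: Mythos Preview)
Your proposal is correct and follows exactly the paper's approach: the paper simply notes that $S^{\ast,\ast,\ast}$ is a double complex in disguise with $\totds S^{\ast,\ast,\ast} = \totds S^{\ast,\ast,0} = \vgamma(Y)$, and then invokes Proposition~\ref{prop:bdfgfree}. Your write-up is a bit more explicit about the identification with the double complex~\eqref{eq:dblgammay}, but the content is identical.
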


\medbreak

If the face~$F$ is contained in the minimal element of the flag~$\tau$
then $A_F \subseteq \comp{\tau}$. Consequently, we can define a triple
complex $T^{\ast,\ast,\ast}$ by
\begin{align}
  T^{u,s,t} &=\bigoplus_{\substack{F\subset S \\ \dim F=u}} \Big( Y_F^s
  \otimes_{A_F} \bigoplus_{\substack{\tau \in \mathcal{N}_F \\ \dim \tau=t}}
  A\langle\tau\rangle \Big)\\
  \noalign{\noindent with differentials induced by}
  d_x &= [F:G] \cdot (s_{FG} \otimes \lambda_{FG})   \ ,     \notag\\  
  d_y &= (-1)^{u} \cdot (d_F  \otimes 1)    \ , \text{ and} \notag\\  
  d_z &= (-1)^{u+s} \cdot (1 \otimes d_{\mathcal{N}_F}) \ ,    \notag    
\end{align}
with $\lambda_{FG}$~is as in~\eqref{eq:proj_NF_NG}, where
$d_{\mathcal{N}_F}$ denotes the differential of the cochain
complex~$\vgamma (\bsd_F)$ and $d_F$ denotes the differential of the
cochain complex~$Y_F$. (Note that for fixed~$t$ the maps~$d_x$ are
induced by the cochain complex maps
\[[F:G] \cdot \big( s_{FG} \tensor \lambda_{FG} \big) \colon Y_F
\tensor_{A_F} \comp{\tau} \rTo Y_G \tensor_{A_G} \comp{\tau} \ ,\]
where $\tau$~is a $t$\nbd-dimensional flag in~$\mathcal{N}_G$; this
implies, in view of~(\ref{eq:lambda_functorial}), that $d_x \circ d_x
= 0$.)

\medbreak

The triple complex just defined is such that $T^{u,s,*}$, for fixed
indices~$u$ and~$s$, is a direct sum of complexes of the form $Y^s_F
\tensor_{A_F} \vgamma (E_F)$, with differential changed by a sign
$(-1)^{u+s}$. But as $\vgamma (E_F)$~is quasi-isomorphic to~$A_F$ via
the map~$\sigma_F$ defined in~\eqref{eq:sigma_F}, and as $Y^s_F$~is a
free $A_F$\nbd-module we have a quasi-isomorphism $Y^s_F \tensor_{A_F}
\vgamma (E_F) \simeq Y^s_F$. In fact, the compositions
\[Y_F^s \iso Y_F^s \tensor_{A_F} A_F \rTo^{\id \tensor \sigma_F} Y_F^s
\tensor_{A_F} \vgamma (E_F)\] assemble to a map of triple complexes
\[\upsilon \colon S^{\ast,\ast,\ast} \rTo T^{\ast,\ast,\ast}\] which
is a quasi-isomorphism on complexes in $z$\nbd-direction in the sense
of Lemma~\ref{lem:triple_map}. We thus have:

\begin{lemma}
  The map $\totds (\upsilon) \colon \totds S^{\ast,\ast,\ast} \rTo
  \totds T^{\ast,\ast,\ast}$ is a quasi-isomorphism.\qed
\end{lemma}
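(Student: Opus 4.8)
The plan is to invoke Lemma~\ref{lem:triple_map}. That lemma yields the assertion once two things are checked: that the triple complexes $S^{\ast,\ast,\ast}$ and $T^{\ast,\ast,\ast}$ are concentrated in a finite cubical region of~$\bZ^3$, and that $\upsilon$ is a quasi-isomorphism on all cochain complexes in the $z$\nbd-direction. The latter is in essence the content of the discussion preceding the statement, so the only genuinely new point is the (routine) finiteness check.

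To check finiteness I would observe that the index~$u$ is the dimension of a face of the square $S=[-1,1]^2$ and hence lies in a bounded set; the index~$s$ is a cochain degree of~$Y$, which is bounded because $Y_S = C$ is a bounded complex and Construction~\ref{con:extend} leaves the range of degrees unchanged; and the index~$t$ is the simplicial dimension of a flag in one of the nerves~$\mathcal{N}_F$, each of which is a finite simplicial complex (its vertex set $\mathrm{st}(F)$ is finite). Therefore $S^{u,s,t}$ and $T^{u,s,t}$ vanish outside a finite cube, as required by Lemma~\ref{lem:triple_map}.

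For the $z$\nbd-direction claim I would fix~$u$ and~$s$ and note that $S^{u,s,\ast}$ is concentrated in degree $t=0$ with value $\bigoplus_{\dim F=u} Y_F^s$, while $T^{u,s,\ast}$ is the direct sum over the same faces~$F$ of the complexes $Y_F^s \tensor_{A_F} \vgamma(E_F)$ (with an overall sign $(-1)^{u+s}$ on the differential), the restriction of~$\upsilon$ to the $F$\nbd-summand being $\id_{Y_F^s} \tensor \sigma_F$ under the identification $Y_F^s \iso Y_F^s \tensor_{A_F} A_F$. Since $Y_F^s$ is a free, hence flat, $A_F$\nbd-module, $\sigma_F$ is a quasi-isomorphism by Lemma~\ref{lem:morse}, and $\vgamma(E_F)$ is a bounded complex (again because $\mathcal{N}_F$ is finite), the tensored map $\id_{Y_F^s} \tensor \sigma_F$ is a quasi-isomorphism; a finite direct sum of quasi-isomorphisms is a quasi-isomorphism, and the global sign twist on the differential changes nothing. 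Hence $\upsilon$ meets the hypotheses of Lemma~\ref{lem:triple_map}, which then delivers the claim.

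I do not expect a real obstacle here: the statement is essentially a repackaging of Lemma~\ref{lem:triple_map} together with the preceding paragraph. The only steps that need a moment's care are the flatness argument that keeps $\sigma_F$ a quasi-isomorphism after tensoring with~$Y_F^s$ (handled by freeness of~$Y_F^s$ and boundedness of~$\vgamma(E_F)$), and the harmless remark that multiplying a differential by $(-1)^{u+s}$ affects neither cohomology nor the quasi-isomorphism property.
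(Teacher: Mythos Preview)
Your proposal is correct and follows essentially the same approach as the paper: the paper records in the paragraph immediately preceding the lemma that $\upsilon$ is a quasi-isomorphism on complexes in the $z$\nbd-direction (using freeness of~$Y_F^s$ over~$A_F$ and Lemma~\ref{lem:morse}), and then invokes Lemma~\ref{lem:triple_map} to conclude. Your version is slightly more explicit about verifying the finite-support hypothesis of Lemma~\ref{lem:triple_map}, but otherwise the arguments coincide.
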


\medbreak

Next, we define a triple complex~$U^{\ast,\ast,\ast}$ which is,
informally speaking, the restriction of~$T^{\ast,\ast,\ast}$ to those
flags which are either zero-dimensional, or do not involve~$S$.
Explicitly,
\begin{equation}
  \label{eq:triple_U}
  U^{u,s,t} =
  \begin{cases}
    0 & \text{for } t \neq 0,1 \ ,\\
    \bigoplus_{\substack{F\subset S \\ \dim F=u}} \Big( Y_F^s
    \otimes_{A_F} \bigoplus_{\substack{\tau \in \mathcal{N}_F \\ \dim
        \tau = 1 \\ S \notin \tau}} \comp{\tau} \Big)
    & \text{for } t = 1 \ , \\
    \bigoplus_{\substack{F\subset S \\ \dim F=u}} \Big( Y_F^s
    \otimes_{A_F} \bigoplus_{G \supseteq F} \comp{G} \Big)
    & \text{for } t = 0 \ ;
  \end{cases}
\end{equation}
note that $U^{u,s,1} = 0$ if $u \neq 0$, and that the second direct
sum in the last line is taken over all $0$\nbd-dimensional flags
in~$\mathcal{N}_F$, \ie, over all faces~$G$ containing~$F$. The
differentials are either trivial by necessity, or the restrictions of
the corresponding differentials of~$T^{\ast,\ast,\ast}$ where
possible.

There is an obvious ``projection'' map of triple complexes
\[\omega \colon T^{\ast,\ast,\ast} \rTo U^{\ast,\ast,\ast} \ ;\] it is
given by sending the summand $Y^s_F \tensor_{A_F} \comp{\tau}$ to
itself via the identity map if the target contains the same summand,
and by sending it to~$0$ otherwise. (Note that $U^{\ast,\ast,\ast}$~is
not a direct summand of~$T^{\ast,\ast,\ast}$ due to the presence of
too many non-trivial differentials in $z$\nbd-direction in the
latter.)

\begin{lemma}
  If all the cochain complexes listed in~\eqref{eq:cond_edge}
  and~\eqref{eq:cond_vertex} are acyclic, the map $\totds (\omega)$ is
  a quasi-isomorphism.
\end{lemma}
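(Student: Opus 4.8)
The plan is to exhibit $\omega$ as a surjection of triple complexes with acyclic totalised kernel. Since $\omega$ is the identity on those summands $Y^s_F \tensor_{A_F} \comp{\tau}$ of $T^{\ast,\ast,\ast}$ that survive in $U^{\ast,\ast,\ast}$ and zero on all others, it is surjective, and there is a short exact sequence of triple complexes
\[0 \rTo K^{\ast,\ast,\ast} \rTo T^{\ast,\ast,\ast} \rTo^{\omega} U^{\ast,\ast,\ast} \rTo 0 \ ,\]
where $K^{u,s,t}$ is the sum of the summands indexed by a face~$F$ with $\dim F = u$ and a flag $\tau \in \mathcal{N}_F$ which is either two\nbd-dimensional or one\nbd-dimensional with $S \in \tau$. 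First I would check that $K^{\ast,\ast,\ast}$ is genuinely a subcomplex: $d_z$ raises the flag dimension by one, $d_x$ preserves the flag (this is exactly where the definition of $\lambda_{FG}$ is used), and $d_y$ is internal, so no differential can carry such a summand back into one belonging to $U^{\ast,\ast,\ast}$. Applying the exact functor $\totds$ and invoking the long exact cohomology sequence, it then suffices to prove that $\totds K^{\ast,\ast,\ast}$ is acyclic.

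The key observation is that every flag occurring in $K^{\ast,\ast,\ast}$ contains~$S$, and writing such a flag as $\tau = \tau' \cup \{S\}$ sets up a bijection with the non-empty flags~$\tau'$ of \emph{proper} faces of~$S$, namely $\tau' \in \bigl\{\{v\},\ \{e\},\ \{v,e\}\bigr\}$ for vertices~$v$ and edges~$e$ with $v \subset e$. As $S$ is the top face, $\min\tau = \min\tau'$, and since $d_x$ preserves~$\tau$ it preserves~$\tau'$; hence for each~$t$ the complex $K^{\ast,\ast,t}$ decomposes, as a complex, into a direct sum over the $(t{-}1)$\nbd-dimensional flags~$\tau'$, the $\tau'$\nbd-summand being the \v Cech-type complex $\bigoplus_{F \subseteq G_0} Y^\ast_F \tensor_{A_F} \comp{\tau' \cup \{S\}}$ over the faces of $G_0 := \min\tau'$, with the $Y$\nbd-differentials in the remaining direction. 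Because $\comp{\tau' \cup \{S\}}$ is an $A_{G_0}$\nbd-module and each $Y_F$ is free over~$A_F$, associativity of tensor products rewrites this as $\bigl(\bigoplus_{F \subseteq G_0} Y^\ast_F \tensor_{A_F} A_{G_0}\bigr) \tensor_{A_{G_0}} \comp{\tau' \cup \{S\}}$; by quasi\nbd-coherence of~$Y$ the first factor is the \v Cech complex of the \emph{constant} diagram~$Y^\ast_{G_0}$ over the face poset of~$G_0$, that is, $Y^\ast_{G_0} \tensor_R K_{G_0}$, where $K_{G_0}$ is the cellular cochain complex of the contractible polytope~$G_0$. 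Since $K_{G_0}$ is a bounded complex of finitely generated free $R$\nbd-modules quasi\nbd-isomorphic to~$R$, it is homotopy equivalent to~$R$ in degree~$0$; combined with one more use of quasi\nbd-coherence (and $A_S \subseteq \comp{\tau' \cup \{S\}}$) the $\tau'$\nbd-summand becomes homotopy equivalent to $Y^\ast_{G_0} \tensor_{A_{G_0}} \comp{\tau' \cup \{S\}} \iso C \tensor_L \comp{\tau' \cup \{S\}}$.

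Now $K^{\ast,\ast,\ast}$ is concentrated in flag degrees $t = 1, 2$, so $\totds K^{\ast,\ast,\ast}$ fits into a short exact sequence whose sub\nbd- and quotient complexes are the totalisations of the $t = 2$ and $t = 1$ layers; it is therefore enough to show each layer totalises to an acyclic complex. For $t = 1$ the flags~$\tau'$ are the single proper faces, and by the previous paragraph the layer is homotopy equivalent to the sum of the eight complexes $C \tensor_L \comp{v,S}$ and $C \tensor_L \comp{e,S}$ over the four vertices and four edges; tracing through the conventions of~\S\ref{sec:nerve} these are exactly the eight complexes of~\eqref{eq:cond_edge} and~\eqref{eq:cond_vertex}, hence acyclic by hypothesis. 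For $t = 2$ the flags are the pairs $\tau' = \{v,e\}$ with $\min\tau' = v$, and the layer is the sum of the $C \tensor_L \comp{v,e,S}$; here the iterated \textsc{Novikov} ring $\comp{v,e,S}$ contains the edge ring $\comp{e,S}$ as a subring, and since $C \tensor_L \comp{e,S}$ is a bounded acyclic complex of finitely generated free $\comp{e,S}$\nbd-modules, hence contractible, so is $C \tensor_L \comp{v,e,S} \iso (C \tensor_L \comp{e,S}) \tensor_{\comp{e,S}} \comp{v,e,S}$. Both layers being acyclic, $\totds K^{\ast,\ast,\ast}$ is acyclic and $\totds(\omega)$ is a quasi\nbd-isomorphism.

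I expect the main obstacle to be organisational rather than conceptual: one must verify with care that $K^{\ast,\ast,\ast}$ is a subcomplex and that the splitting of $K^{\ast,\ast,t}$ over the flags~$\tau'$ is a splitting \emph{as complexes} (both hinge on the precise effect of $\lambda_{FG}$ on summands, and on $G \subseteq \min\tau' = G_0$ forcing $\tau' \cup \{S\} \in \mathcal{N}_G$), and one must track that every identification used before the final step is an honest isomorphism of complexes, so that the only homotopy\nbd-theoretic input is the harmless contractibility of~$K_{G_0}$ together with the standard fact that an acyclic bounded complex of projectives is contractible.
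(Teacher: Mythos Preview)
Your argument is correct, but it takes a longer route than the paper's. The paper does not pass to the kernel~$K^{\ast,\ast,\ast}$ at all: it simply observes that for each fixed pair $(u,t)$ the map $\omega \colon T^{u,\ast,t} \rTo U^{u,\ast,t}$ is a quasi-isomorphism in the $s$\nbd-direction, and then invokes Lemma~\ref{lem:triple_map}. The point is that every summand killed by~$\omega$ is of the form $Y_F^{\ast} \tensor_{A_F} \comp{\tau}$ with $S \in \tau$, and since $A_S \subseteq \comp{\tau}$ one has \emph{directly}
\[
Y_F^{\ast} \tensor_{A_F} \comp{\tau} \iso Y_F^{\ast} \tensor_{A_F} A_S \tensor_{A_S} \comp{\tau} \iso C \tensor_{A_S} \comp{\tau}
\]
by a single application of quasi-coherence, bypassing the intermediate ring~$A_{G_0}$ altogether. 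Your detour through the \textsc{\v Cech} complex of~$G_0$ and the contractibility of~$K_{G_0}$ is valid but superfluous: because $S$ lies in every flag of~$K^{\ast,\ast,\ast}$, you can factor through~$A_S$ rather than~$A_{G_0}$, and then each individual $s$\nbd-column of your $\tau'$\nbd-summand is already contractible, so no bundling over the face poset of~$G_0$ is needed. What your approach buys is an explicit picture of the kernel as a filtered object; what the paper's approach buys is brevity, since the triple-complex lemma absorbs all the filtration bookkeeping. (Incidentally, for the $t=2$ layer you factor through $\comp{e,S}$ whereas the paper factors through $\comp{v,S}$; both inclusions into $\comp{v,e,S}$ hold, so either choice works.)
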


\begin{proof}
  First note that the complexes listed in~\eqref{eq:cond_edge} are of
  the type $C \tensor \comp{e,S}$ for an edge~$e$ of~$S$ (all unmarked
  tensor products are over the ring $A_S = R[x,\,x\inv,\,y,\,y\inv]$
  in this proof). Similarly, the complexes listed
  in~\eqref{eq:cond_vertex} are of the form $C \tensor \comp{v,S}$ for
  $v$~a vertex of~$S$.

  Suppose now that these complexes are acyclic. Since $C$~is a bounded
  complex of free modules, they are then in fact contractible, \ie,
  homotopy equivalent to the trivial complex. Since tensor products
  preserve homotopies, it follows that for each $2$\nbd-dimensional
  flag $\tau = (v \subset e \subset S)$ the cochain complex
  \[ Y_S \otimes_{A_S} \comp{\tau} \iso Y_S \otimes_{A_S} \comp{v,S}
  \otimes_{\comp{v,S}} \comp{\tau}\] is contractible and hence
  acyclic. That is, acyclicity of the complexes \eqref{eq:cond_edge}
  and~\eqref{eq:cond_vertex} implies acyclicity of the additional
  eight complexes
  \begin{align*}
    & C \tensor R\nov{x}\nov{y} \ , && C \tensor R\nov{x}\nov{y\inv}  \ , \\
    & C \tensor R\nov{x\inv}\nov{y} \ ,  && C \tensor R\nov{x\inv}\nov{y\inv} \ , \\
    & C \tensor R\nov{y}\nov{x} \ ,  && C \tensor R\nov{y}\nov{x\inv} \ , \\
    & C \tensor R\nov{y\inv}\nov{x} \ ,  && C \tensor
    R\nov{y\inv}\nov{x\inv} \ .
  \end{align*}

  Let $F$ be a face of~$S$. If $\tau \in \mathcal{N}_F$ denotes a
  positive-dimensional flag ending in~$S$, we know that $A_S \subset
  \comp{\tau}$ and thus
  \begin{equation*}
    Y_F \tensor_{A_F} \comp{\tau} \iso Y_F \tensor_{A_F} A_S
    \tensor_{A_S} \comp{\tau}  \iso C \tensor_{A_S} \comp{\tau} \simeq
    0
  \end{equation*}
  where we made use of the fact that $Y_F \tensor_{A_F} A_S \iso Y_S =
  C$ according to Construction~\ref{con:extend}.

  But this means that $\omega$~is a quasi-isomorphism of cochain
  complexes
  \[T^{u,\ast,t} \rTo U^{u,\ast,t}\] for all $u,t \in \bZ$. Indeed,
  for $t=0$ it is an identity map, for $t=1$ it is a direct sum of
  identity maps (corresponding to summands indexed by flags of the
  form $v \subset e$) and maps from acyclic to trivial complexes (all
  other summands), for $t=2$ it is a map from an acyclic to a trivial
  one, using the results of the previous two paragraphs. From
  Lemma~\ref{lem:triple_map} we conclude that $\totds (\omega)$ is a
  quasi-isomorphism as claimed.
\end{proof}

Now $U^{\ast,\ast,\ast}$ has a direct summand consisting, informally
speaking, of the summands indexed by $G=S$ at height $t=0$ only:
\begin{equation}
  \label{eq:triple_V}
  V^{u,s,t} =
  \begin{cases}
    0 & \text{for } t \neq 0 \ ,\\
    \bigoplus_{\substack{F\subset S \\ \dim F=u}} \big( Y_F^s
    \otimes_{A_F} \comp{S} \big) & \text{for } t = 0 \ ;
  \end{cases}
\end{equation}
differentials are obtained by restricting the corresponding ones
of~$U^{\ast,\ast,\ast}$. Clearly then $\totds V^{\ast,\ast,\ast}$ is a
direct summand of~$\totds U^{\ast,\ast,\ast}$.

The triple-complex totalisation~$V^{\ast,\ast,\ast}$ agrees with the
double complex totalisation $\totds V^{\ast,\ast,0}$ (due to the
absence of non-trivial terms for $z \neq 0$). Also, we have $\comp{S}
= A_S$ and $Y_F^s \tensor_{A_F} A_S \iso C^s$, by construction of~$Y$;
it follows that the double complex~$V^{\ast,\ast,0}$ is isomorphic to
the double complex $W^{\ast,\ast}$ which, in column~$p$, has a direct
sum of copies of~$C$ indexed by the $p$\nbd-dimensional faces of~$S$,
with differential changed by the sign $(-1)^p$, and has in row~$q$ the
\textsc{\v Cech} complex of the constant $\fp$\nbd-indexed diagram
with value~$C^s$.

\begin{lemma}
  There is a quasi-isomorphism $C \rTo \totds (W^{\ast,\ast})$.
\end{lemma}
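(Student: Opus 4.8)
The plan is to deduce the assertion from the \textsc{Bott}--\textsc{Tu}
  type statement recorded as Lemma~\ref{lem:augmented_double}. First I
  would note that $W^{\ast,\ast}$ is concentrated in the three columns
  $p = 0$, $1$, $2$, since $S$~has no faces of dimension outside
  $\{0,1,2\}$. After temporarily re-indexing $C$ so that all its
  non-zero cochain modules sit in non-negative degrees --- exactly the
  device already used in the proof of Proposition~\ref{prop:bdfgfree}
  --- the double complex $W^{\ast,\ast}$ thus becomes a first-quadrant
  double complex, and a quasi-isomorphism for the re-indexed complex
  gives one for the original after undoing the shift.

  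Next I would introduce, for each~$q$, the ``diagonal'' map
  \[h_q \colon C^q \rTo W^{0,q} = \bigoplus_{\dim F = 0} C^q\]
  whose component at each vertex~$F$ of~$S$ is the identity of~$C^q$,
  and then verify the two hypotheses of
  Lemma~\ref{lem:augmented_double}. For the exactness hypothesis,
  observe that by construction the $q$th row $W^{\ast,q}$ is the
  \textsc{\v Cech} complex $\vgamma$ of the constant $\fp$\nbd-indexed
  diagram with value~$C^q$, formed using the incidence numbers $[F:G]$
  of the face lattice of~$S$ fixed at the beginning
  of~\S\ref{sec:diagrams}; hence the sequence
  \[0 \rTo C^q \rTo^{h_q} \bigoplus_{\dim F = 0} C^q \rTo
  \bigoplus_{\dim F = 1} C^q \rTo C^q \rTo 0\]
  is, up to signs on basis elements (which do not affect exactness),
  the augmented cellular cochain complex of the square~$S$ with
  coefficients in~$C^q$, and it is therefore exact because $S$~is
  contractible. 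That $h_q$ followed by the first \textsc{\v Cech}
  differential vanishes --- so that the displayed row is genuinely a
  cochain complex --- is precisely property~(DI3) of the incidence
  numbers, the component of this composite indexed by an edge~$G$ being
  $\big( \sum_{\dim F = 0,\ F \subset G} [F:G] \big) \cdot \id_{C^q} =
  0$. For the compatibility-up-to-sign hypothesis, recall that the
  vertical differential of $W^{\ast,\ast}$ in column~$0$ carries the
  sign $(-1)^0 = 1$ and acts as~$d_C$ in each vertex summand, so both
  composites $C^q \rTo^{h_q} W^{0,q} \rTo W^{0,q+1}$ and $C^q
  \rTo^{d_C} C^{q+1} \rTo^{h_{q+1}} W^{0,q+1}$ are equal to the
  diagonal embedding of $d_C \colon C^q \rTo C^{q+1}$.

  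With both hypotheses established, Lemma~\ref{lem:augmented_double}
  produces a quasi-isomorphism $C \rTo \totds W^{\ast,\ast}$ induced by
  the maps~$h_q$; reverting to the original indexing of~$C$ finishes
  the argument. I expect the only slightly delicate point to be the
  identification of the augmented row complex above with the acyclic
  augmented cochain complex of the contractible square~$S$, \ie,
  checking that the chosen incidence numbers turn~$h_q$ into a
  \emph{bona fide} cochain map; but that is exactly the content of
  axiom~(DI3), so no real obstacle remains.
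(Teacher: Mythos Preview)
Your proof is correct and follows the same route as the paper's: re-index to the first quadrant, take the diagonal map into column zero, identify the augmented rows with the acyclic augmented cellular cochain complex of the contractible square~$S$, and apply Lemma~\ref{lem:augmented_double}. The paper's only variation is a cosmetic sign~$(-1)^q$ on~$h_q$, so that the two composites agree merely up to sign rather than exactly as in your version; your explicit appeal to axiom~(DI3) for the vanishing of $d_h \circ h_q$ is a nice touch the paper leaves implicit.
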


\begin{proof}
  Since $C$ is bounded we may, by simple re-indexing, assume that $C$
  is concentrated in non-negative degrees and that consequently
  $W^{\ast,\ast}$ is concentrated in the first quadrant. For $q \geq
  0$ let $h_q$ denote the diagonal inclusion $C^q \rTo W^{0,q} =
  \oplus_v C^q$, modified by a sign~$(-1)^q$ (the direct sum taken
  over all vertices of~$S$). By construction the two composites
  \[C^q \rTo W^{0,q} \rTo W^{0,q+1} \quad \text{and} \quad C^q \rTo
  C^{q+1} \rTo W^{0,q+1}\] agree up to sign. The
  complexes
  \[0 \rTo C^q \rTo W^{0,q} \rTo W^{1,q} \rTo \cdots\] are exact; this
  follows, for example, from the observation that they can be obtained
  by tensoring the dual of the augmented cellular chain complex of~$S$
  (which computes $\tilde H_*(S; R) = 0$) with the free
  $R$-module~$C^q$. --- We can now apply
  Proposition~\ref{lem:augmented_double} to conclude that $C \simeq
  \totds W^{\ast,\ast}$ as claimed.
\end{proof}

\providecommand{\bysame}{\leavevmode\hbox to3em{\hrulefill}\thinspace}

\end{document}